\newcommand{\op}{\ensuremath{^{\mathrm{op}}}}
\newcommand{\cA}{{\mathcal A}}
\newcommand{\cB}{{\mathcal B}}
\newcommand{\cS}{{\mathcal S}}
\newcommand{\cX}{{\mathcal X}}
\newcommand{\eff}{\operatorname{eff}}
\newcommand{\md}{\operatorname{mod}}
\newcommand{\Md}{\operatorname{Mod}}
\newcommand{\im}{\operatorname{im}}
\newcommand{\Id}{\operatorname{Id}}
\newcommand{\Tr}{\operatorname{Tr}}
\newcommand{\emphbf}[1]{\emph{\textbf{#1}}}
\newcommand{\Hom}{\operatorname{Hom}}
\newcommand{\Ext}{\operatorname{Ext}}
\newcommand{\End}{\operatorname{End}}
\newcommand{\Ker}{\operatorname{Ker}}
\newcommand{\Coker}{\operatorname{Coker}}
\newcommand{\mylabel}[2]{#2\def\@currentlabel{#2}\label{#1}}
\title{Axiomatizing Subcategories of Abelian Categories}
\date{\today}
\keywords{Cluster tilting; Abelian category; Homological algebra;}
\author{Sondre Kvamme}
\address{Sondre Kvamme\\
Department of Mathematics, Uppsala University 
\\ 75106 Uppsala, Sweden
} \email{sondre.kvamme@math.uu.se}
\begin{document}

\newtheorem{Theorem}[equation]{Theorem}
\newtheorem{Lemma}[equation]{Lemma}
\newtheorem{Corollary}[equation]{Corollary}
\newtheorem{Proposition}[equation]{Proposition}
\newtheorem{Conjecture}[equation]{Conjecture}

\theoremstyle{definition}
\newtheorem{Definition}[equation]{Definition}
\newtheorem{Example}[equation]{Example}
\newtheorem{Remark}[equation]{Remark}
\newtheorem{Setting}[equation]{Setting}

\thanks{}

\subjclass[2010]{18E10, 16G70}

\begin{abstract}
We investigate how to characterize subcategories of abelian categories in terms of intrinsic axioms. In particular, we find axioms which characterize generating cogenerating functorially finite subcategories,  precluster tilting subcategories, and  cluster tilting subcategories of abelian categories. As a consequence we prove that any $d$-abelian category is equivalent to a $d$-cluster tilting subcategory of an abelian category, without any assumption on the categories being projectively generated.
\end{abstract}

\maketitle

\setcounter{tocdepth}{2}
\numberwithin{equation}{section}
\tableofcontents

\section{Introduction}
Higher Auslander-Reiten theory was introduced by Iyama in \cite{Iya07a} and further developed in \cite{Iya07,Iya11}. It has several connections to other areas, for example non-commutative algebraic geometry \cite{HI11a,HIMO14,Iya07}, combinatorics \cite{OT12}, higher category theory \cite{DJW19}, and symplectic geometry \cite{DJL19}. One of the main objects of study are $d$-cluster tilting subcategories of abelian, exact, and triangulated categories. The study of their intrinsic properties, called higher homological algebra, is an active area of research, see for example \cite{Fed19,Fed20,HJV17,JJ17,JJ20,J16,Rei20}. This approach was catalysed by the papers \cite{GKO13} and \cite{Jas16}, where they introduced $d$-abelian, $d$-exact, and $(d+2)$-angulated categories as an axiomatization of $d$-cluster tilting subcategories. In particular, they showed that $d$-cluster tilting subcategories of abelian, exact or triangulated categories are $d$-abelian, $d$-exact, or $(d+2)$-angulated, respectively, and that any projectively generated $d$-abelian category is equivalent to a $d$-cluster tilting subcategory of an abelian category. 

Axiomatizing subcategories of abelian categories is closely related to  characterizing $\Lambda$-modules $M$, where $\Lambda$ is an Artin algebra, in terms of properties of the endomorphism algebra $\Gamma:=\End_{\Lambda}(M)$. In an unpublished manuscript \cite{MT} Morita and Tachikawa showed that $M\mapsto \Gamma$ gives a correspondence between generating and cogenerating modules $M$ and algebras $\Gamma$ with dominant dimension $\geq 2$. Auslander \cite{Aus71} showed that this specializes to the case where $M$ is an additive generator of a module category and $\Gamma$ is an algebra with dominant dimension $\geq 2$ and global dimension $\leq 2$. This is typically called the Auslander-correspondence. It was later extended by Iyama \cite{Iya07} to a bijection between $d$-cluster tilting modules $M$ and algebras $\Gamma$ with dominant dimension $\geq d+1$ and global dimension $\leq d+1$. Recently Iyama and Solberg \cite{IS18} introduced $d$-precluster tilting modules $M$ and showed that the assignment $M\mapsto \Gamma$ gives a bijection to algebras of dominant dimension $\geq d+1$ and selfinjective dimension $\leq d+1$. In all of these cases one characterizes the module $M$ in terms of properties of the category of finitely presented $\Gamma$-modules. One can interpret axiomatization similarly, but where the characterization is in terms of the category  of finitely generated projective $\Gamma$-modules. 

In this paper we continue the idea of axiomatizing  subcategories of abelian categories and study their properties. The following definition clarifies what we mean:

\begin{Definition}\label{Axiomatizing subcategories}
Let $\mathtt{P}$ be a set of axioms of additive categories, and let $\mathtt{S}$ be a class of subcategories of abelian categories. We say that  $\mathtt{P}$ \emphbf{axiomatizes} subcategories in $\mathtt{S}$ if the following hold:
\begin{enumerate}
\item\label{Axiomatizing subcategories:1} If $\cX$ is in $\mathtt{S}$, then $\cX$ satisfies $\mathtt{P}$ as an additive category;
\item\label{Axiomatizing subcategories:2} If $\cX$ satisfies $\mathtt{P}$, then there exists an abelian category $\cA$ and a fully faithful functor $\cX\to \cA$ such that its essential image is in $\mathtt{S}$;
\item\label{Axiomatizing subcategories:3} If $\cA$ and $\cA'$ are abelian categories and $\Phi\colon \cX\to \cA$ and $\Phi'\colon \cX\to \cA'$ are two fully faithful functors such that the essential images of $\Phi$ and $\Phi'$ are in $\mathtt{S}$, then there exists an equivalence $\Psi\colon \cA\xrightarrow{\cong} \cA'$ and a natural isomorphism $\Psi \circ \Phi\cong \Phi'$.
\end{enumerate}
\end{Definition}

Part \ref{Axiomatizing subcategories:1} and \ref{Axiomatizing subcategories:2} tells us that an additive category $\cX$ satisfies $\mathtt{P}$ if and only if it is equivalent to a subcategory in $\mathtt{S}$. Part \ref{Axiomatizing subcategories:3} tells us that the ambient abelian category of a subcategory in $\mathtt{S}$ must be unique up to equivalence. 

We prove the following theorem, which gives examples of axioms $\mathtt{P}$ and classes of subcategories $\mathtt{S}$. See the end of the introduction for the list of axioms.
\begin{Theorem}\label{Main theorem}
Let $\mathtt{P}$ be a set of axioms for additive categories, and let $\mathtt{S}$ be a class of subcategories of abelian categories. We have that $\mathtt{P}$ axiomatizes subcategories in $\mathtt{S}$ in the following cases:
\begin{enumerate}
\item\label{Main theorem:1} $\mathtt{P}$ consists of the axioms (A1), (A1)\op, (A2), (A2)\op, (A3), and (A3)\op , and $\mathtt{S}$ is the class of generating cogenerating functorially finite subcategories;
\item\label{Main theorem:2} $\mathtt{P}$ consists of the axioms (A1), (A1)\op, (A2), (A2)\op, (A3), and (A3)\op, and ($d$-Rigid) and $\mathtt{S}$ is the class of generating cogenerating functorially finite subcategories $\cX$ satisfying
\[
\Ext^i_{\cA}(X,X')=0 \text{ for all }X,X'\in \cX \text{ and }0<i<d
\]
where $\cA$ is the ambient abelian category;
\item\label{Main theorem:3} $\mathtt{P}$ consists of the axioms (A0), (A1), (A1)\op, (A2), (A2)\op, (A3), and (A3)\op, ($d$-Rigid), (A4.$d$), and (A4.$d$)\op , and $\mathtt{S}$ is the class of $d$-precluster tilting subcategories;
\item\label{Main theorem:4} $\mathtt{P}$ consists of the axioms (A0), (A1), (A1)\op, (A2), (A2)\op, (A3), and (A3)\op, ($d$-Rigid), ($d$-Ker), and ($d$-Coker) and $\mathtt{S}$ is the class of $d$-cluster tilting subcategories.
\end{enumerate} 
\end{Theorem}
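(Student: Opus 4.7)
The plan is to prove the four parts as a sequence of strengthenings sharing a common construction. The backbone of the argument is to associate, to every additive category $\cX$ satisfying the base axioms (A1), (A1)\op, (A2), (A2)\op, (A3), and (A3)\op, an explicit candidate ambient abelian category $\cA_{\cX}$, defined as a suitable subcategory of $\Md\cX\op$. Roughly, $\cA_{\cX}$ should consist of those functors $F\colon \cX\op\to \mathrm{Ab}$ admitting a presentation $\cX(-,X_1)\to \cX(-,X_0)\to F\to 0$ by representables, and satisfying a dual finiteness condition obtained by applying a relative Nakayama functor built from the axioms. Part (A2) and its opposite will provide the approximations making the Yoneda embedding $\cX\hookrightarrow \cA_{\cX}$ functorially finite, (A3) and (A3)\op will ensure that the image is generating and cogenerating, and (A1) and (A1)\op will supply the weak kernels and cokernels needed so that $\cA_{\cX}$ is honestly abelian rather than merely preabelian.

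With this framework in place, the three conditions of Definition \ref{Axiomatizing subcategories} decompose as follows. For necessity (i), given a generating cogenerating functorially finite embedding $\cX\hookrightarrow \cA$, each axiom is verified directly by extracting approximations, kernels, and cokernels inside $\cA$. For sufficiency (ii), the embedding $\cX\to \cA_{\cX}$ constructed above is shown to land in a generating cogenerating functorially finite subcategory. For uniqueness (iii), which is where the real work lies, one must compare an arbitrary fully faithful $\Phi\colon\cX\to\cA$ of the prescribed type with the Yoneda embedding $\cX\to \cA_{\cX}$; this I would do by building a functor $\cA\to \cA_{\cX}$ sending $A\in \cA$ to the restricted representable $\Phi(\cX)(-, A)|_{\cX}$, and showing via the generator/cogenerator approximations that this functor is an equivalence compatible with $\Phi$ up to natural isomorphism.

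For parts \ref{Main theorem:2}, \ref{Main theorem:3}, and \ref{Main theorem:4}, the strategy is to bootstrap part \ref{Main theorem:1}: once $\cA_{\cX}$ exists and is determined up to equivalence by $\cX$, the additional axioms can be interpreted as intrinsic statements about $\Ext^i_{\cA_{\cX}}(X,X')$ and relative kernels/cokernels. Specifically, ($d$-Rigid) is the direct translation of $\Ext^i=0$ for $0<i<d$ in terms of the internally computable derived functors on $\cA_{\cX}$, giving part \ref{Main theorem:2}. For part \ref{Main theorem:3}, the axioms (A4.$d$) and (A4.$d$)\op should be set up so that the intrinsic higher Nakayama functor on $\cX$ agrees with the dualizing structure of a $d$-precluster tilting subcategory, matching the setup of Iyama and Solberg \cite{IS18}. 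Part \ref{Main theorem:4} adds ($d$-Ker) and ($d$-Coker), which ensure existence of $d$-term resolutions of every object of $\cA_{\cX}$ by objects of $\cX$, forcing the cluster tilting condition $\cX={}^{\perp_{1,\dots,d-1}}\cX={}^{\perp_{1,\dots,d-1}}\cX$ at both sides.

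The main obstacle is step (iii) of Definition \ref{Axiomatizing subcategories} for part \ref{Main theorem:1}: one must recover $\cA$ from $\cX$ without reference to the embedding. The subtlety is that the natural candidate functor $\cA\to\cA_{\cX}$ needs to be simultaneously shown to land inside $\cA_{\cX}$ (which uses that $\cX$ is cogenerating and functorially finite on the right), to be fully faithful (which requires $\cX$ to be generating and functorially finite on the left), and to be essentially surjective (which requires the intrinsic axioms to characterize exactly the presentations arising from $\cA$). Once this is settled, all higher parts reduce essentially to bookkeeping about how the additional axioms translate through the equivalence $\cA\simeq \cA_{\cX}$.
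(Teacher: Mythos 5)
Your plan has the right overall shape (build a candidate ambient category from functor categories, verify the three clauses of Definition \ref{Axiomatizing subcategories}, then bootstrap the higher parts), but it contains a genuine gap at its foundation, and it is a gap the paper spends most of its length closing. You propose $\cA_{\cX}$ as a \emph{subcategory} of the functor category cut out by an unspecified ``dual finiteness condition'' coming from a ``relative Nakayama functor,'' and you assert that (A1), (A1)\op{} make it abelian. That is precisely the hard point: a full subcategory of $\Md\cX$ of finitely presented functors is not abelian for free, and you give no candidate for the condition nor any argument that the resulting category is closed under kernels and cokernels. The paper takes a different and concrete route: it forms the quotient $\md\cX/\eff\cX$ by the Serre subcategory $\eff\cX$ of effaceable functors, and the viability of this rests on Proposition \ref{effaceable functors}, where (A2) is used to show $\Hom_{\cX}(F,\cX(-,X))=0$ \emph{and} $\Ext^1_{\md\cX}(F,\cX(-,X))=0$ for $F\in\eff\cX$, which is what makes $\eff\cX$ Serre and makes the composite $\cX\to\md\cX\to\md\cX/\eff\cX$ fully faithful. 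Nothing in your sketch plays the role of this $\Ext^1$-vanishing.

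The second concrete gap is the role you assign to (A3) and (A3)\op. You say they ``ensure the image is generating and cogenerating,'' but generating (together with contravariant finiteness on $\Omega^2_{\cX}$) already follows from (A1), (A2) alone, and cogenerating plus covariant finiteness from their duals --- each in its \emph{own} ambient category, namely $\md\cX/\eff\cX$ and $(\md\cX\op/\eff\cX\op)\op$. The actual content of (A3), (A3)\op{} (Proposition \ref{Equivalent criteria for A3}) is that $\Ext^1_{\md\cX\op}(F,\cX)\in\eff\cX$ and dually, which forces the unit and counit of the adjunction $(-)^*$ to become isomorphisms after localization and hence glues the two one-sided constructions into a single abelian category in which $\cX$ is generating, cogenerating, and functorially finite on both sides. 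Your single construction would need an analogue of this gluing, and the proposal does not supply one. Smaller but real issues: for part \ref{Main theorem:2} you treat the equivalence of ($d$-Rigid) with $\Ext$-vanishing as a ``direct translation,'' whereas the converse direction (Theorem \ref{d-Rigid and Ext vanishing}) needs a nontrivial induction with Yoneda extensions and Lemma \ref{1-Rigid}; and for part \ref{Main theorem:3} you do not address that the definition of $d$-precluster tilting in \cite{IS18} involves $\tau_d$ and only makes sense over an Artin algebra, so one must first prove the homological reformulation (Theorem \ref{reformulation precluster tilting}, including the delicate $d=1$ case via almost split sequences) before (A4.$d$), (A4.$d$)\op{} can be matched to it.
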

We even find a class of subcategories axiomatized by (A1) and (A2), see Theorem \ref{Embedding Theorem}. Note that the definition of $d$-precluster tilting subcategories in \cite{IS18} can be reformulated in a way that makes sense for any abelian category, see Theorem \ref{reformulation precluster tilting}, and this reformulated definition is what we use in Theorem \ref{Main theorem} \ref{Main theorem:3}. 

As a corollary of Theorem \ref{Main theorem} \ref{Main theorem:4} we show that any $d$-abelian category is equivalent to a $d$-cluster tilting subcategory, without the assumption of being projectively generated. Since for any $d\geq 1$ there exist examples of $d$-cluster tilting subcategories without any non-zero projective or injective objects, see \cite{JK19}, the result is necessary to complete the axiomatization of $d$-cluster tilting subcategories in terms of $d$-abelian categories.

\begin{Corollary}\label{d-abelian is d-cluster tilting main}
Let $\cX$ be an additive category. The following hold:
\begin{enumerate}
\item\label{d-abelian is d-cluster tilting main:1} $\cX$ is $d$-abelian if and only if it satisfies (A0), (A1), (A1)\op, (A2), (A2)\op, (A3), and (A3)\op, (d-Rigid), (d-Ker), and (d-Coker);
\item\label{d-abelian is d-cluster tilting main:2} If $\cX$ is $d$-abelian, then there exists an abelian category $\cA$ and a fully faithful functor $\cX\to \cA$ such that its essential image is $d$-cluster tilting.\footnote{The author gave a talk about this result in the LMS Northern Regional Meeting and Workshop on Higher Homological Algebra in 2019}
\end{enumerate}
\end{Corollary}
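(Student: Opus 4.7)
The plan is to deduce both parts of the corollary from Theorem~\ref{Main theorem}\ref{Main theorem:4}, combined with Jasso's theorem~\cite{Jas16} that every $d$-cluster tilting subcategory of an abelian category is $d$-abelian. Write $\mathtt{P}$ for the list (A0), (A1), (A1)\op, (A2), (A2)\op, (A3), (A3)\op, ($d$-Rigid), ($d$-Ker), ($d$-Coker).

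First I would handle the ``if'' direction of~\ref{d-abelian is d-cluster tilting main:1}: suppose $\cX$ satisfies $\mathtt{P}$. Then Theorem~\ref{Main theorem}\ref{Main theorem:4} supplies an abelian category $\cA$ and a fully faithful functor $\Phi\colon\cX\to \cA$ whose essential image is $d$-cluster tilting. By Jasso this image is $d$-abelian, and since $d$-abelianness is invariant under additive equivalence, $\cX$ itself is $d$-abelian. Part~\ref{d-abelian is d-cluster tilting main:2} will then drop out once the other direction of~\ref{d-abelian is d-cluster tilting main:1} is in hand: any $d$-abelian $\cX$ satisfies $\mathtt{P}$, and the embedding clause of Theorem~\ref{Main theorem}\ref{Main theorem:4} realises it as a $d$-cluster tilting subcategory of some abelian category.

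The real content is therefore the ``only if'' direction of~\ref{d-abelian is d-cluster tilting main:1}: verify that every $d$-abelian category satisfies each axiom in $\mathtt{P}$. For this I would unpack the definition of $d$-abelian from~\cite{Jas16} and proceed axiom by axiom. I expect ($d$-Ker) and ($d$-Coker) to be essentially restatements of the existence of $d$-kernels of epimorphisms and $d$-cokernels of monomorphisms, and (A0) to follow from the idempotent completeness that a $d$-abelian category enjoys. For (A1)--(A3), their opposites, and ($d$-Rigid), I would construct the required diagrams inside $\cX$ by splicing $d$-exact sequences and using Jasso's $d$-pushout/$d$-pullback formalism; the self-duality of the axioms under $\op$ mirrors the built-in self-duality of the $d$-abelian axioms, so the opposite versions should require no separate argument once the non-opposite versions are established.

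The main obstacle will be the translation layer between the compact Jasso axioms and the longer list $\mathtt{P}$. In particular one must confirm that the $d$-kernel/$d$-cokernel structure of a $d$-abelian category is strong enough to produce all the approximation-flavoured content of (A1)--(A3), and that it supplies the natural strengthening of the precluster-tilting condition (A4.$d$) discussed in Theorem~\ref{reformulation precluster tilting}. Once all the axioms in $\mathtt{P}$ have been checked in the $d$-abelian setting, both \ref{d-abelian is d-cluster tilting main:1} and \ref{d-abelian is d-cluster tilting main:2} follow by the reduction explained above.
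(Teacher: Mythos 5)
Your proposal is correct and follows essentially the same route as the paper: the ``if'' direction of \ref{d-abelian is d-cluster tilting main:1} and part \ref{d-abelian is d-cluster tilting main:2} are obtained from Theorem \ref{Main theorem} \ref{Main theorem:4} (in the paper, via Theorem \ref{axioms implying d-cluster tilting}) together with Jasso's result that $d$-cluster tilting subcategories are $d$-abelian, while the ``only if'' direction is an axiom-by-axiom verification in which the only nontrivial point is (A3), settled exactly as you suggest by Jasso's $d$-pushout statement (his Proposition 3.13), which yields the variant \ref{A3'} shown equivalent to \ref{A3} in Remark \ref{Remark A3 and A3'}. The one stray remark is your reference to (A4.$d$), which plays no role here since it is not among the axioms for the $d$-cluster tilting case.
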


We end the introduction by giving the list of axioms we use:
\begin{enumerate}
\item[(A0)] $\cX$ is idempotent complete;

\item[(A1)] $\cX$ has weak kernels;

\item[(A1)\op] $\cX$ has weak cokernels;

\item[(A2)] Any epimorphism in $\cX$ is a weak cokernel;

\item[(A2)\op] Any monomorphism in $\cX$ is a weak kernel;

\item[(A3)] Consider the following diagram
\begin{equation*}
\begin{tikzcd}
X_2 \arrow{rr}{f} \arrow{rd}{l} & & X_1 \arrow{r}{g} & X_0 \\
 & \arrow{ru}{h}X_2' & 
\end{tikzcd}
\end{equation*}
where $f$ is an arbitrary morphism in $\cX$, where $g$ is a weak cokernel of $f$, where $h$ is a weak kernel of $g$, and where $l$ is an induced map satisfying $h\circ l=f$. Then for any weak kernel $k\colon X_3'\to X_2'$ of $h$ the map $\begin{bmatrix}l & k\end{bmatrix}\colon X_2\oplus X_3'\to X_2'$ is an epimorphism;

\item[(A3)\op] Consider the following diagram
\begin{equation*}
\begin{tikzcd}
X_0 \arrow{r}{g} & X_1 \arrow{rr}{f} \arrow{rd}{h} & & X_2 \\
& & \arrow{ru}{l}X_2' & 
\end{tikzcd}
\end{equation*}
where $f$ is an arbitrary morphism in $\cX$, where $g$ is a weak kernel of $f$, where $h$ is a weak cokernel of $g$, and where $l$ is a map satisfying $l\circ h=f$. Then for any weak cokernel $k\colon X_2'\to X_3'$ of $h$ the map $\begin{bmatrix}l \\ k\end{bmatrix}\colon X_2'\to X_2\oplus X_3'$ is a monomorphism;

\item[(A4.$d$)] Let
\[
X_{d+1}\xrightarrow{f_{d+1}}X_d\xrightarrow{f_d}\cdots \xrightarrow{f_3}X_2\xrightarrow{f_2}X_1\xrightarrow{f_1}X_0\xrightarrow{f_0}X_{-1}
\]
be a sequence with $f_{i+1}$ a weak kernel of $f_i$ for all $0\leq i\leq d$. Then $f_{d+1}$ is a weak cokernel;

\item[(A4.$d$)\op] Let
\[
X_{d+1}\xrightarrow{f_{d+1}}X_d\xrightarrow{f_d}\cdots \xrightarrow{f_3}X_2\xrightarrow{f_2}X_1\xrightarrow{f_1}X_0\xrightarrow{f_0}X_{-1}
\]
be a sequence with $f_{i}$ a weak cokernel of $f_{i+1}$ for all $0\leq i\leq d$. Then $f_{0}$ is a weak kernel;

\item[($d$-Rigid)] For all epimorphism $f_1\colon X_1\to X_0$ in $\cX$ there exists a sequence
\[
X_{d+1}\xrightarrow{f_{d+1}}X_d\xrightarrow{f_d}\cdots \xrightarrow{f_3}X_2\xrightarrow{f_2}X_1\xrightarrow{f_1}X_0
\]
with $f_{i+1}$ a weak kernel of $f_i$ and $f_i$ a weak cokernel of $f_{i+1}$ for all $1\leq i\leq d$;

\item[($d$-Ker)] Any map in $\cX$ has a $d$-kernel;

\item[($d$-Coker)] Any map in $\cX$ has a $d$-cokernel.
\end{enumerate}

Note that the axiom ($d$-Rigid) is self-dual under the assumption of axioms (A1), (A1)\op, (A2), (A2)\op, (A3), and (A3)\op, since these axioms together axiomatizes generating cogenerating functorially finite $d$-rigid subcategories by Theorem \ref{Main theorem} \ref{Main theorem:2}, and the definition of such subcategories are self-dual. 

\subsection{Conventions}
All categories are assumed to be additive, i.e. enriched over abelian groups and admitting finite direct sums. For an additive category $\cX$ we let $\cX(X,X')$ denote the set of morphism between two objects $X,X'\in \cX$ and $\Hom_{\cX}(F_1,F_2)$ the set of natural transformations between two additive functors $F_1,F_2\colon \cX\op\to \operatorname{Ab}$. A subcategory $\cX$ of an abelian category $\cA$ is called generating (resp cogenerating) if for any object $A\in \cA$ there exists an epimorphism $X\to A$ (resp a monomorphism $A\to X$) with $X\in \cX$.

\subsection{Acknowledgement}
Corollary \ref{d-abelian is d-cluster tilting main} \ref{d-abelian is d-cluster tilting main:2} is proved independently by Ramin Ebrahimi and Alireza Nasr-Isfahani in \cite{RA20}. The author would like to thank the anonymous referee for helpful suggestions which has improved the readability of the paper.

\section{Serre subcategories}
In this section we recall the localization of an abelian category by a Serre subcategory. Let $\cA$ be an abelian category. A subcategory $\cS$ of $\cA$ is called a \emph{Serre subcategory} if for any exact sequence in $\cA$
\[
0\to A_1\to A_2\to A_3\to 0
\]
we have that $A_1\in \cS$ and $A_3\in \cS$ if and only if $A_2\in \cS$. If $\cS$ be a Serre subcategory of $\cA$, category $\cA/\cS$ is defined to be the localization of $\cA$ by the class of morphisms $f\colon X\to X'$ satisfying 
\[
\Ker f\in \cS\quad \text{and} \quad \Coker f \in \cS.
\]
Note that the objects in $\cA/\cS$ are the same as the objects in $\cA$. We need the following results for this localization, which follows from  Proposition 1 and Lemma 2 in Chapter 3 in \cite{Gab62}.

\begin{Theorem}\label{Serre subcategory}
Let $\cS$ be a Serre subcategory of $\cA$, and let $q\colon \cA\to \cA/\cS$ denote the canonical functor to the localization. The following hold:
\begin{enumerate}
\item  $\cA/\cS$ is an abelian category;
\item  $q$ is an exact functor;
\item  For a morphism $f\colon A_1\to A_2$ in $\cA$, we have that $q(f)=0$ if and only if $\im f \in \cS$. 
\end{enumerate} 
\end{Theorem}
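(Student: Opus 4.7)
The plan is to construct the localization $\cA/\cS$ via a calculus of fractions using the class
\[
\Sigma := \{f \in \Mor(\cA) : \Ker f \in \cS \text{ and } \Coker f \in \cS\},
\]
and then derive all three statements from standard properties of such localizations. The first task is therefore to verify that $\Sigma$ is a multiplicative (in fact two-sided) system of morphisms compatible with the additive structure. Closure under composition uses the Serre property applied to the snake lemma: given composable $f, g \in \Sigma$, one gets an exact sequence relating $\Ker(gf)$, $\Ker f$, $\Ker g$, and similarly for cokernels, and the Serre property lets us conclude $\Ker(gf), \Coker(gf) \in \cS$. The Ore conditions are checked by forming pullbacks and pushouts in $\cA$: if $s\colon X \to Y$ is in $\Sigma$ and $f\colon Z \to Y$ is arbitrary, then the pullback $W \to Z$ of $s$ along $f$ has kernel and cokernel isomorphic (via standard diagram chasing) to sub/quotient objects of $\Ker s$ and $\Coker s$, hence lies in $\Sigma$.

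Once $\Sigma$ is known to be a multiplicative system, general theory produces an additive category $\cA/\cS := \cA[\Sigma^{-1}]$ and an additive functor $q\colon \cA \to \cA/\cS$ with the universal property of inverting $\Sigma$. For (i), I would construct kernels and cokernels explicitly: given a morphism in $\cA/\cS$ represented by a fraction $f s^{-1}$ with $s \in \Sigma$, one shows its kernel is $q(\Ker f)$ and its cokernel is $q(\Coker f)$, using the Ore conditions to reduce to morphisms coming from $\cA$. The key verification is that every monomorphism in $\cA/\cS$ is a kernel and every epimorphism is a cokernel; this reduces (via the fractions representation and Ore conditions) to the analogous fact in $\cA$ together with the observation that $q$ identifies morphisms in $\Sigma$ with isomorphisms.

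For (ii), exactness of $q$ is essentially built into the construction: $q$ preserves kernels and cokernels because for any $f\colon A_1 \to A_2$ the canonical morphisms $\Ker f \to A_1$ and $A_2 \to \Coker f$ become kernel and cokernel of $q(f)$ by the explicit description above. For (iii), one first proves the basic fact that $q(A) = 0$ if and only if $A \in \cS$: the ``if'' direction is clear since $A \to 0$ lies in $\Sigma$ when $A \in \cS$, and the ``only if'' direction uses that $A \xrightarrow{\mathrm{id}} A$ must be equivalent in $\cA/\cS$ to the zero fraction, which via the calculus of fractions forces $A$ to admit a morphism in $\Sigma$ from or to zero, hence $A \in \cS$. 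Then for $f\colon A_1 \to A_2$, the factorization $A_1 \twoheadrightarrow \im f \hookrightarrow A_2$ together with exactness of $q$ gives $q(f) = 0 \iff q(\im f) = 0 \iff \im f \in \cS$.

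The main obstacle will be the bookkeeping needed to verify the Ore conditions and the abelian structure cleanly; the Serre property is used repeatedly and in different guises, and keeping track of which short exact sequences live in $\cA$ versus $\cA/\cS$ requires care. Since this is exactly the content of Gabriel's original treatment (Proposition 1 and Lemma 2 in Chapter 3 of \cite{Gab62}), I would in practice simply cite those results rather than reproduce the proof.
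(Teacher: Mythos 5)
Your proposal is correct and takes essentially the same route as the paper: the paper gives no proof of its own but simply cites Proposition 1 and Lemma 2 of Chapter 3 in Gabriel's thesis (and remarks, citing Weibel, that the relevant morphism class is a multiplicative system), which is exactly what you propose to do after outlining the standard calculus-of-fractions construction. Your sketch of that construction is accurate, so there is nothing to add.
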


\begin{Remark}
Note that the class of morphisms $f$ with $\Ker f\in \cS$ and $\Coker f\in \cS$ forms a multiplicative system in $\cA$, see \cite[Exercise 10.3.2 (1)]{Wei94}. In particular, the morphisms in $\cA/\cS$ can be described using a calculus of fractions, see \cite[Chapter 10.3]{Wei94}.
\end{Remark}

\section{Weak kernels and weak cokernels}
In this section we recall the definition of weak kernels and cokernels and their basic properties. Let $\cX$ be an additive category.  We denote the category of additive functors from $\cX\op$ to $\operatorname{Ab}$ by $\Md \cX$. Note that the Yoneda embedding gives a fully faithful functor
\begin{align*}
& \cX\to \Md \cX \quad \quad X\mapsto \cX(-,X) 
\end{align*}
A functor $F\colon \cX\op\to \operatorname{Ab}$ is called \emph{finitely presented} if there exists an exact sequence
\[
\cX(-,X_1)\to \cX(-,X_0)\to F\to 0
\]
in $\Md \cX$, and the subcategory of finitely presented functors is denoted by $\md \cX$. We have that $\md \cX$ is closed under cokernels in $\Md \cX$. Let $g\colon X''\to X'$ and $f\colon X'\to X$  be two composable morphisms in $\cX$. We say that $g$ is a \emph{weak kernel} of $f$ if 
\[
\cX(Y,X'')\xrightarrow{g\circ -}\cX(Y,X')\xrightarrow{f\circ -}\cX(Y,X)
\]
is an exact sequence of abelian groups for all $Y\in \cX$. Dually, we say that $f$ is a \emph{weak cokernel} of $g$ if 
\[
\cX(X,Y)\xrightarrow{-\circ f}\cX(X',Y)\xrightarrow{-\circ g}\cX(X'',Y)
\]
is an exact sequence of abelian groups for all $Y\in \cX$. The category $\cX$ has \emph{weak kernels} or \emph{weak cokernels} if any morphism in $\cX$ has a weak kernel or weak cokernel, respectively. In the following theorem we relate these notions to $\md \cX$.

\begin{Theorem}[Theorem 1.4 in \cite{Fre66}]\label{Freyd}
Let $\cX$ be an additive category. Then $\cX$ has weak kernels if and only if $\md \cX$ is abelian. 
\end{Theorem}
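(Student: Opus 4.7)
The plan is to reduce the equivalence to whether $\md \cX$ is closed under kernels in the abelian functor category $\Md \cX$, exploiting that the excerpt already records closure under cokernels. A useful consequence of cokernel closure is that a morphism in $\md \cX$ is epimorphic in $\md \cX$ exactly when it is pointwise surjective (an epimorphism in $\md \cX$ has zero cokernel there, which by closure is also its cokernel in $\Md \cX$), so representables remain projective in $\md \cX$.

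For the forward direction, given $\alpha\colon F \to G$ in $\md \cX$, I would lift it to a map of projective presentations
\[
\begin{tikzcd}
\cX(-,X_1) \arrow{r}{a_1} \arrow{d} & \cX(-,X_0) \arrow{r}{a_0} \arrow{d}{c} & F \arrow{r} \arrow{d}{\alpha} & 0 \\
\cX(-,Y_1) \arrow{r}{d_1} & \cX(-,Y_0) \arrow{r}{d_0} & G \arrow{r} & 0
\end{tikzcd}
\]
using projectivity of representables in $\Md \cX$. A pointwise chase identifies $\ker\alpha(Z)$ with classes $[\tilde x]\in F(Z)$ of morphisms $\tilde x\colon Z \to X_0$ for which $c\tilde x = d_1\tilde y$ for some $\tilde y\colon Z\to Y_1$. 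A weak kernel $w\colon W \to X_0 \oplus Y_1$ of $[c,-d_1]$ yields, via its first component $\pi\colon W \to X_0$, a pointwise surjection $\cX(-,W) \twoheadrightarrow \ker\alpha$; its pointwise kernel at $Z$ consists of $z\colon Z\to W$ with $\pi z \in \im(a_1)_Z$, and a second weak kernel $v\colon V \to W\oplus X_1$ of $[\pi,-a_1]$ produces a pointwise surjection from $\cX(-,V)$ onto that kernel. Hence $\cX(-,V)\to \cX(-,W) \to \ker\alpha \to 0$ is a finite presentation, so $\md \cX$ is closed under kernels and therefore abelian.

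For the converse, suppose $\md \cX$ is abelian and fix $f\colon X \to Y$ in $\cX$. The kernel $K$ of $\cX(-,f)$ in $\md \cX$ is finitely presented, so admits an epimorphism $\cX(-,W) \twoheadrightarrow K$, and the composite $\cX(-,W) \to K \hookrightarrow \cX(-,X)$ corresponds by Yoneda to a morphism $w\colon W \to X$ with $fw = 0$. For any $g\colon Z\to X$ with $fg=0$, the induced map $\cX(-,Z)\to \cX(-,X)$ vanishes after composition with $\cX(-,f)$, hence factors through $K$ by the universal property of the kernel in $\md \cX$, and then lifts along $\cX(-,W) \twoheadrightarrow K$ by projectivity of $\cX(-,Z)$; applying Yoneda produces $g'\colon Z\to W$ with $wg'=g$, proving $w$ is a weak kernel. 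The main technical obstacle in both directions is verifying that kernels and epimorphisms computed inside $\md \cX$ coincide with their pointwise counterparts in $\Md \cX$; both reduce to the closure of $\md \cX$ under cokernels.
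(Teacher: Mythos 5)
The paper does not prove this statement; it is quoted verbatim from Freyd (Theorem 1.4 in \cite{Fre66}), so there is no in-paper proof to compare against. Your argument is correct and is essentially the classical one: in the forward direction you use two successive weak kernels to build a finite presentation of the pointwise kernel (so $\md\cX$ is closed under kernels as well as cokernels in $\Md\cX$, hence abelian), and in the converse you extract a weak kernel of $f$ from an epimorphism $\cX(-,W)\twoheadrightarrow \Ker\cX(-,f)$ via Yoneda; the only step left implicit is the routine fact that a full additive subcategory closed under kernels and cokernels in an abelian category is itself abelian.
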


We call a morphism $f$ a weak kernel or weak cokernel if there exists a morphism $g$ such that $f$ is the weak kernel or weak cokernel of $g$, respectively. The following result is known to the experts. We give a proof for the readers convenience.

\begin{Lemma}\label{Weak kernels and cokernels}
Let $f\colon X\to X'$ be a morphism in $\cX$. The following hold:
\begin{enumerate}
\item\label{Weak kernels and cokernels:1} If $f$ is a weak kernel and admits a weak cokernel, then it is a weak kernel of its weak cokernel;
\item\label{Weak kernels and cokernels:2} If $f$ is a weak cokernel and admits a weak kernel, then it is a weak cokernel of its weak kernel.
\end{enumerate}
\end{Lemma}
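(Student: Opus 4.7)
The plan is to prove (i) directly and then invoke duality for (ii). Assume $f\colon X\to X'$ is a weak kernel, so by definition there exists $g\colon X'\to X''$ in $\cX$ such that $f$ is a weak kernel of $g$, and let $h\colon X'\to Z$ be a weak cokernel of $f$. I want to show that for every $Y\in \cX$ the sequence
\[
\cX(Y,X)\xrightarrow{f\circ -}\cX(Y,X')\xrightarrow{h\circ -}\cX(Y,Z)
\]
is exact.

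First I would record the two zero-composition facts that fall out of the hypotheses. Testing the weak-kernel property of $f$ against $Y=X$ and $\mathrm{id}_X$ shows $g\circ f=0$, and $h\circ f=0$ holds by definition of a weak cokernel. The inclusion $\mathrm{Im}(f\circ -)\subseteq \ker(h\circ -)$ in $\cX(Y,X')$ is immediate from $h\circ f=0$. For the reverse inclusion, I use that $h$ is a weak cokernel of $f$: since $g\circ f=0$, there exists $g'\colon Z\to X''$ with $g=g'\circ h$. Now given $u\colon Y\to X'$ with $h\circ u=0$, I get $g\circ u=g'\circ h\circ u=0$, and since $f$ is a weak kernel of $g$ there exists $v\colon Y\to X$ with $f\circ v=u$. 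This gives the exactness, so $f$ is a weak kernel of $h$.

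Part (ii) then follows by applying (i) in the opposite category $\cX\op$: weak kernels and weak cokernels, and the hypotheses of (i) and (ii), swap roles under $(-)\op$.

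No step here is really an obstacle; the only thing worth highlighting is that the argument hinges on the interplay between the two weak universal properties, namely that the weak-cokernel property of $h$ is what produces the factorization $g=g'\circ h$ required to reduce testing against $h$ to testing against $g$. Since this lemma is labelled as known to experts and included for convenience, I would keep the write-up to a short paragraph with the factorization diagram and the two inclusions.
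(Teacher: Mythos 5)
Your proof is correct and is essentially identical to the paper's: both obtain the factorization $g=g'\circ h$ from the weak-cokernel property of $h$ applied to $g\circ f=0$, and then use it to show that any $u$ killed by $h$ is killed by $g$ and hence factors through $f$. Part (ii) is likewise handled by duality in the paper.
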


\begin{proof}
We prove \ref{Weak kernels and cokernels:1}, \ref{Weak kernels and cokernels:2} is proved dually. Assume $f$ is a weak kernel of $g\colon X'\to X''$, and let $h\colon X'\to \tilde{X}$ be a weak cokernel of $f$. Since $g\circ f=0$ and $h$ is a weak cokernel of $f$, there exists a morphism $k\colon \tilde{X}\to X''$ such that $k\circ h=g$. Hence, if a morphism $l\colon \tilde{X'}\to X'$ satisfies $h\circ l=0$, then $g\circ l=k\circ h\circ l=0$, so $l$ factors through $f$ since $f$ is a weak kernel of $g$. This implies that $f$ is a weak kernel of $h$.
\end{proof}

Finally, we recall the definition of contravariantly and covariantly finite subcategories. Assume $\cX$ is an additive subcategory of an abelian category $\cA$. A morphism $X\xrightarrow{f} A$ in $\cA$ with $X\in \cX$ is called a \emph{right} $\cX$-\emph{approximation} of $A$ if any map $X'\to A$ with $X'\in \cX$ factors through $f$. Dually, a morphism $g\colon A\to X$ with $X\in \cX$ is a \emph{left} $\cX$-\emph{approximation} of $A$ if it is a right $\cX\op$-approximation of $A$ in $\cA\op$.  We say that $\cX$ is \emph{contravariantly finite} (resp \emph{covariantly finite}) if any object $A$ in $\cA$ admits a right (resp left) $\cX$-approximation. We say that $\cX$ is \emph{functorially finite} if it is both contravariantly finite and covariantly finite. If $\cX$ is a contravariantly finite subcategory, then it has weak kernels. In fact, if $f\colon X'\to X$ is a morphism in $\cX$ and  $X''\to \Ker f$ is a right $\cX$-approximation, then the composite $X''\to \Ker f\to X'$ is a weak kernel of $f$. Similarly, any covariantly finite subcategory has weak cokernels, which are constructed in the dual way.


\section{Embeddings into abelian categories}
In this section we compare intrinsic axioms of additive categories with properties of subcategories of abelian categories. For an additive category $\cX$, the intrinsic axioms we consider are:

\begin{enumerate}[label=(A\arabic*)]
\item\label{A1} $\cX$ has weak kernels;
\item\label{A2} Any epimorphism in $\cX$ is a weak cokernel.
\end{enumerate}

For an abelian category $\cA$ and a full subcategory $\cX$ of $\cA$, the properties we consider are:
\begin{enumerate}[label=(B\arabic*)]
\item\label{B1} $\cX$ is a generating subcategory of $\cA$;
\item\label{B2} If $A\in \cA$ satisfies $\cA(A,X)=0$ for all $X\in \cX$, then $A=0$;
\item\label{B3} Any $A\in \Omega^2_{\cX}(\cA)$ admits a right $\cX$-approximation.
\end{enumerate}
Here $\Omega^n_{\cX}(\cA)$ denotes the subcategory of $\cA$ consisting of all objects $A$ for which there exists an exact sequence
\[
0\to A\to X_1\to \cdots \to X_n
\]
where $X_i\in \cX$ for all $1\leq i\leq n$. Our main goal is to prove the following theorem:

\begin{Theorem}\label{Embedding Theorem}
Assume $\mathsf{P}$ consists of the axioms \ref{A1} and \ref{A2}. Then $\mathsf{P}$ axiomatizes subcategories of abelian categories satisfying \ref{B1}, \ref{B2} and \ref{B3}. 
\end{Theorem}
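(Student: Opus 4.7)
Following Definition \ref{Axiomatizing subcategories}, three conditions must be verified: necessity of the axioms, existence of an embedding, and uniqueness.

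\textbf{Necessity.} Suppose $\cX \subseteq \cA$ satisfies \ref{B1}--\ref{B3}. Given $f\colon X_1 \to X_0$ in $\cX$, its $\cA$-kernel $K$ lies in $\Omega^2_\cX(\cA)$ via the exact sequence $0 \to K \to X_1 \to X_0$, so \ref{B3} furnishes a right $\cX$-approximation $\alpha\colon X_2 \to K$; a direct diagram chase using kernel universality and the approximation property shows $X_2 \to K \hookrightarrow X_1$ is a weak kernel of $f$, giving \ref{A1}. For \ref{A2} I first use \ref{B2} to show every $\cX$-epimorphism $f$ is also an $\cA$-epimorphism (the $\cA$-cokernel admits no nonzero map to $\cX$, hence vanishes). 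Then, applying the same approximation recipe to $K := \Ker_\cA f \in \Omega^2_\cX(\cA)$ and combining with an $\cA$-epimorphism $Z \twoheadrightarrow K$ from \ref{B1} (which factors through $\alpha$ via the approximation property, forcing $\alpha$ to be $\cA$-epic), one checks that $f$ is a weak cokernel of the composite $X_2 \to K \hookrightarrow X_1$.

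\textbf{Existence.} Assuming \ref{A1} and \ref{A2}, Theorem \ref{Freyd} makes $\md\cX$ abelian with the Yoneda embedding $X \mapsto h_X := \cX(-,X)$ fully faithful and each $h_X$ projective. The plan is to take $\cA$ to be a Serre quotient of $\md\cX$. I define
\[
\cS := \bigl\{\,F \in \md\cX \ \bigm|\ \forall X \in \cX,\ \forall \eta \in F(X),\ \exists\ \cX\text{-epimorphism}\ g\colon X' \to X\ \text{with}\ F(g)(\eta) = 0\,\bigr\},
\]
and check by an element chase that $\cS$ is Serre (subobject closure uses left exactness of $F \mapsto F(X)$; quotient closure uses the surjectivity $F_2(X) \twoheadrightarrow F_3(X)$ coming from projectivity of $h_X$; extension closure uses composability of $\cX$-epimorphisms). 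Set $\cA := \md\cX/\cS$, abelian by Theorem \ref{Serre subcategory}, and let $\Phi$ be the composite $\cX \to \md\cX \to \cA$. Faithfulness of $\Phi$ is immediate: $\Phi(f) = 0$ gives $\Ima h_f \in \cS$, and the definition of $\cS$ applied to $[1_X] \in (\Ima h_f)(X)$ yields a $\cX$-epimorphism $g$ with $fg = 0$, whence $f = 0$. Fullness is more delicate: using the calculus of fractions (remark after Theorem \ref{Serre subcategory}), a morphism $qh_X \to qh_Y$ is represented by a roof $h_X \xleftarrow{s} F \xrightarrow{t} h_Y$ with $\Ker s,\, \Coker s \in \cS$, and applying the definition of $\cS$ to $[1_X] \in (\Coker s)(X)$ produces $g\colon X' \to X$ and a lift $\sigma\colon h_{X'} \to F$ with $s\sigma = h_g$; the refined roof $(h_g, t\sigma)$ represents an honest $\cX$-morphism, and axiom \ref{A2} is needed to verify that this refinement is equivalent to the original roof in the Serre quotient.

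\textbf{Image properties and uniqueness.} Property \ref{B1} follows from exactness of $q$ together with the existence of presentations in $\md\cX$. For \ref{B3}, given $0 \to A \to qh_{X_1} \to qh_{X_0}$ in $\cA$, lift $qh_{X_1} \to qh_{X_0}$ by fullness to $h_f$ in $\md\cX$; by \ref{A1}, $f$ has a weak kernel $k\colon X_2 \to X_1$, and weak-kernel universality inside $\md\cX$ gives $\Ker h_f = \Ima h_k$, so $A = q(\Ima h_k)$ and $qh_{X_2} \to A$ is the required right $\cX$-approximation (another fullness+faithfulness check handles the approximation property). The subtlest point, and the main obstacle of the proof, is \ref{B2}: given $qF \neq 0$, one must produce a nonzero morphism $qF \to qh_Y$ for some $Y$. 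Starting from a witness $\eta \in F(X)$ that resists annihilation by $\cX$-epimorphisms, axiom \ref{A2} provides the weak-cokernel structure needed to convert the cyclic subfunctor generated by $\eta$ into a nonzero map to a representable that survives in $\cA$. For uniqueness, given a second embedding $\Phi' \colon \cX \to \cA'$ with image in the class, I extend $\Phi'$ to a functor $\Psi \colon \cA \to \cA'$ by sending each $A \in \cA$ with presentation $qh_{X_1} \to qh_{X_0} \to A \to 0$ (obtained from \ref{B1}) to $\Coker_{\cA'}(\Phi'(X_1) \to \Phi'(X_0))$; well-definedness, full faithfulness, and essential surjectivity of $\Psi$ are then verified using \ref{B1}--\ref{B3}.
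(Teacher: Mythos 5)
Your overall strategy --- realize $\cX$ inside the Serre quotient of $\md\cX$ by an effaceable subcategory --- is exactly the paper's, and your necessity direction, the Serre-ness of $\cS$, and the faithfulness of $\Phi$ are correct. But the two steps you defer to a gesture are precisely where the real work lies, and as written they are gaps. The most serious is \ref{B2}: you say one must "convert the cyclic subfunctor generated by $\eta$ into a nonzero map to a representable that survives in $\cA$," but no construction is given and none follows from \ref{A2} alone. What is actually needed is (i) the identification $\cS=\{F\in\md\cX\mid \Hom_{\cX}(F,\cX(-,X))=0 \text{ for all }X\}$, so that $qF\neq 0$ yields a nonzero map $F\to\cX(-,Y)$ already in $\md\cX$, and (ii) the injectivity of $q\colon\Hom_{\cX}(F,\cX(-,X))\to\cA(F,\cX(-,X))$, so that this map survives in the quotient. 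The paper proves these as Proposition \ref{effaceable functors} \ref{effaceable functors:2} and Lemma \ref{Fully faithful}, the latter resting on $\Ext^1_{\md\cX}(\eff\cX,\cX(-,X))=0$, which is where \ref{A2} genuinely enters via Lemma \ref{Weak kernels and cokernels}. Relatedly, your pointwise definition of $\cS$ makes Serre-ness cheap but hides a fact you then use repeatedly (in the roof refinement, to know $q(\cX(-,g))$ is epic, and implicitly in \ref{B2}): that $\Coker(g\circ -)$ lies in $\cS$ when $g$ is an $\cX$-epimorphism. With your definition this amounts to pulling an $\cX$-epimorphism back along an arbitrary map, which requires an argument inside $\md\cX$ (form the pullback of representables, cover it by a representable, and invoke Proposition \ref{effaceable functors} \ref{effaceable functors:1}); it is not an element chase.

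The second gap is uniqueness. Defining $\Psi$ on objects by choosing presentations and taking cokernels leaves functoriality and independence of the chosen presentation unaddressed; the paper sidesteps this by extending the inclusion to a right exact functor on all of $\md\cX$ and descending through the universal property of the Serre quotient. More importantly, "full faithfulness \dots verified using \ref{B1}--\ref{B3}" compresses the longest argument in the paper: fullness of the comparison functor (Proposition \ref{uniqueness ambient abelian}) requires constructing a common covering presentation for $F$ and $F'$ from a right $\cX$-approximation of a kernel taken in $\cA'$, and checking that the resulting connecting morphism becomes invertible in the quotient. Neither gap is a wrong turn --- your route can be completed and is essentially the paper's --- but both demand substantive arguments that the proposal does not contain.
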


We can show one part of the theorem immediately.

\begin{Lemma}\label{B1,B2,B3 implies A1,A2}
Assume $\cX\subseteq \cA$ is a full subcategory of an abelian category $\cA$ satisfying \ref{B1}, \ref{B2} and \ref{B3}. The following hold: 
\begin{enumerate}
\item\label{B1,B2,B3 implies A1,A2:1} The inclusion functor $\cX\to \cA$ preserves epimorphisms and sends a sequence $X_2\xrightarrow{f}X_1\xrightarrow{g}X_0$ in $\cX$ with $f$ a weak kernel of $g$ to an exact sequence in $\cA$;
\item\label{B1,B2,B3 implies A1,A2:2} $\cX$ satisfies \ref{A1} and \ref{A2} as an additive category.
\end{enumerate}
\end{Lemma}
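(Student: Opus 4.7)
The plan is to handle the two clauses of (i) separately and then combine them with (B1), (B2), (B3) to derive (A1) and (A2) in (ii).

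I would start with preservation of epimorphisms: if $f\colon X_1\to X_0$ is an epimorphism in $\cX$ with cokernel $C$ taken in $\cA$, fullness of $\cX\subseteq \cA$ forces $\cA(C,X)=0$ for every $X\in\cX$, and (B2) then gives $C=0$. This uses only (B2). Next, to prove that a weak-kernel sequence $X_2\xrightarrow{f}X_1\xrightarrow{g}X_0$ in $\cX$ is sent to an exact sequence in $\cA$, I would write $f=ip$ as its epi-mono factorization in $\cA$; the relation $gf=0$ together with $p$ epi gives $gi=0$, so $i$ factors as $uj$ where $u\colon\ker g\to X_1$ is the canonical inclusion and $j\colon\im f\to\ker g$. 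The map $j$ is automatically monic, so the task reduces to showing $j$ is epic. The plan is to invoke (B1) to pick an epimorphism $\pi\colon X''\to\ker g$ with $X''\in\cX$; then $u\pi\colon X''\to X_1$ lies in $\cX$ by fullness and is killed by $g$, so the weak-kernel property delivers a factorization $u\pi=f\gamma$ for some $\gamma\colon X''\to X_2$ in $\cX$. Cancelling the monomorphism $u$ from $u\pi=u(jp\gamma)$ yields $\pi=j\circ(p\gamma)$, which together with $\pi$ being epi forces $j$ to be epic, hence an isomorphism. I expect this step to be the main obstacle: \emph{weak kernel} is only a testing condition against objects of $\cX$, so one must import enough test maps from $\cA$ via (B1) to translate it into genuine exactness in the ambient abelian category.

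For axiom (A1) in part (ii) I would use (B3): for any $f\colon X_1\to X_0$ in $\cX$, the kernel $\ker f$ fits into an exact sequence $0\to\ker f\to X_1\to X_0$ with the right-hand terms in $\cX$, hence lies in $\Omega^2_{\cX}(\cA)$. So (B3) provides a right $\cX$-approximation $\alpha\colon X_2\to\ker f$, and the composition $u\alpha\colon X_2\to X_1$ (with $u$ the kernel inclusion) is readily checked to be a weak kernel of $f$ in $\cX$ by combining the defining property of $\alpha$ with the universal property of the kernel in $\cA$. For (A2), let $f\colon X_1\to X_0$ be an epimorphism in $\cX$; by part (i) it is also epi in $\cA$ and therefore equals the cokernel of $u\colon\ker f\to X_1$ in $\cA$. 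Using (B1) I would pick an epimorphism $\pi\colon X'\to\ker f$ with $X'\in\cX$; then $u\pi$ is a morphism in $\cX$ and $f$ is still the cokernel of $u\pi$ in $\cA$ (since $\pi$ is epi and $\im(u\pi)=\im u$). Fullness of $\cX$ then guarantees that every $\cX$-morphism $h\colon X_1\to Y$ with $h(u\pi)=0$ satisfies $hu=0$ and factors through $f$ inside $\cX$, exhibiting $f$ as a weak cokernel of $u\pi$ and finishing the argument.
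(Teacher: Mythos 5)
Your proposal is correct and follows essentially the same route as the paper: epimorphisms are preserved via (B2) applied to the cokernel, (A1) comes from the (B3)-approximation of $\Ker f$, and (A2) from the fact that an epimorphism in $\cX$ is an actual cokernel in $\cA$ of a map from $\cX$. The only cosmetic difference is that you verify exactness of an \emph{arbitrary} weak-kernel sequence directly by testing against a (B1)-epimorphism $X''\to\Ker g$, whereas the paper first builds one exact weak kernel from (B3) and then notes that any other weak kernel must factor it; both arguments are fine.
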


\begin{proof}
Assume $\cX\subseteq \cA$ satisfies \ref{B1}, \ref{B2} and \ref{B3}.  Let $X_1\xrightarrow{g}X_0$ be an arbitrary morphism in $\cX$. Then $\Ker g\in \Omega^2_{\cX}(\cA)$, and hence there exists a right $\cX$-approximation $X_2\to \Ker g$, which is surjective since $\cX$ is a generating subcategory of $\cA$. It follows that the composite $X_2\to \Ker g\to X_1$ is a weak kernel of $g$. This proves \ref{A1} and shows that there exists a weak kernel $X_2\to X_1$ of $g$ such that $X_2\to X_1\xrightarrow{g}X_0$ is exact in $\mathcal{A}$. But then this property must hold for any weak kernel of $g$, since $X_2\to X_1$ must factor through any such map. 

Now assume $g$ is an epimorphism, and let $\Coker g$ be the cokernel of $g$ in $\cA$. Applying $\cA(-,X)$ with $X\in \cX$ to the exact sequence
\[
X_1\xrightarrow{g}X_0\to \Coker g\to 0
\]
gives an exact sequence 
\[
0\to \cA(\Coker g,X)\to \cA(X_0,X)\xrightarrow{-\circ g} \cA(X_1,X)
\]
If $X\in \cX$, then since $g$ is an epimorphism in $\cX$ it follows that $-\circ g$ is a monomorphism, and hence $\cA(\Coker g,X)=0$. By \ref{B2} it follows that $\Coker g=0$, so $g$ is an epimorphism in $\cA$. Hence the inclusion $\cX\to \cA$ preserves epimorphisms. Finally, since $X_2\to X_1\xrightarrow{g}X_0\to 0$ is exact, $g$ is a cokernel of $X_2\to X_1$, which proves \ref{A2}.  
\end{proof}

Now assume $\cX$ is an additive category satisfying \ref{A1} and \ref{A2}. Since $\cX$ has weak kernels, the category of finitely presented functors $\md\cX$ is abelian by Theorem \ref{Freyd}. Let $\eff\cX$ denote the subcategory of $\md\cX$ consisting of all functors $F$ for which there exists an exact sequence
\[
\cX(-,X_1)\xrightarrow{f\circ -}\cX(-,X_0)\to F\to 0
\]
where $f\colon X_1\to X_0$ is an epimorphism in $\cX$. We show that $\eff \cX$ is a Serre subcategory of $\md \cX$.

\begin{Proposition}\label{effaceable functors}
Let $\cX$ be an additive category satisfying \ref{A1} and \ref{A2}. The following hold:
\begin{enumerate}
\item\label{effaceable functors:1} If 
\[
\cX(-,X'_1)\xrightarrow{f'\circ -}\cX(-,X'_0)\to F\to 0
\]
is exact with $F\in \eff \cX$, then $f'\colon X_1'\to X_0'$ is an epimorphism in $\cX$;
\item\label{effaceable functors:2} $\eff \cX=\{F\in \md\cX\mid \Hom_{\cX}(F,\cX(-,X))=0 \text{ for all } X\in \cX\}$;
\item\label{effaceable functors:3} If $F\in \eff \cX$ then $\Ext^1_{\md\cX}(F,\cX(-,X))=0$ for all $X\in \cX$;
\item\label{effaceable functors:4} $\eff \cX$ is a Serre subcategory of $\md \cX$.
\end{enumerate}
\end{Proposition}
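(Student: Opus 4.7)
The plan is to reduce everything to the Yoneda identification $\Hom_{\cX}(\cX(-,Y), \cX(-,X)) \cong \cX(Y, X)$. The central calculation is the following: for any presentation $\cX(-,X_1) \xrightarrow{f \circ -} \cX(-,X_0) \to F \to 0$ and any $X \in \cX$, applying the left-exact functor $\Hom_{\cX}(-, \cX(-,X))$ and invoking Yoneda on the representable terms yields
\[
0 \to \Hom_{\cX}(F, \cX(-,X)) \to \cX(X_0, X) \xrightarrow{-\circ f} \cX(X_1, X),
\]
identifying $\Hom_{\cX}(F, \cX(-,X))$ with the set $\{h \colon X_0 \to X \mid h \circ f = 0\}$. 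This group vanishes for every $X \in \cX$ if and only if $f$ is an epimorphism in $\cX$.

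Parts (i) and (ii) fall out together from this observation. For the inclusion $\eff\cX \subseteq \{F \mid \Hom_{\cX}(F, \cX(-,X)) = 0 \text{ for all } X\}$, apply the calculation to the distinguished presentation of $F \in \eff\cX$, whose first map is epi by definition. Conversely, if all these Homs vanish, then for \emph{any} presentation of $F$ the calculation forces the first map to be epi, placing $F \in \eff\cX$. Part (i) is then immediate: given $F \in \eff\cX$, (ii) gives vanishing of $\Hom_{\cX}(F, \cX(-,X))$ for all $X$, and applying the calculation to the presentation whose first map is $f'$ forces $f'$ to be an epimorphism in $\cX$.

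For (iii), axiom \ref{A1} supplies a weak kernel $f_2 \colon X_2 \to X_1$ of $f$, extending the distinguished presentation of $F$ to an exact sequence
\[
\cX(-, X_2) \xrightarrow{f_2 \circ -} \cX(-, X_1) \xrightarrow{f \circ -} \cX(-, X_0) \to F \to 0.
\]
By standard homological algebra, $\Ext^1_{\md\cX}(F, \cX(-,X))$ is the cohomology at the middle term of $\cX(X_0, X) \xrightarrow{-\circ f} \cX(X_1, X) \xrightarrow{-\circ f_2} \cX(X_2, X)$. Since $f$ is an epimorphism, axiom \ref{A2} tells us $f$ is a weak cokernel, and Lemma~\ref{Weak kernels and cokernels}(ii) upgrades this to: $f$ is the weak cokernel of its own weak kernel $f_2$. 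This is precisely the exactness needed.

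For (iv), combine (ii) and (iii) with the long exact sequence for $\Ext^*_{\md\cX}(-, \cX(-,X))$ applied to a short exact sequence $0 \to F_1 \to F_2 \to F_3 \to 0$ in $\md\cX$. If $F_1, F_3 \in \eff\cX$, then $\Hom_{\cX}(F_i, \cX(-,X))$ vanishes for $i=1,3$ by (ii) and $\Ext^1_{\md\cX}(F_3, \cX(-,X))$ vanishes by (iii), sandwiching $\Hom_{\cX}(F_2, \cX(-,X))$ between zeros, so $F_2 \in \eff\cX$. Conversely, if $F_2 \in \eff\cX$, the inclusion $\Hom_{\cX}(F_3, \cX(-,X)) \hookrightarrow \Hom_{\cX}(F_2, \cX(-,X)) = 0$ first yields $F_3 \in \eff\cX$; then (iii) gives $\Ext^1_{\md\cX}(F_3, \cX(-,X)) = 0$, which combined with $\Hom_{\cX}(F_2, \cX(-,X)) = 0$ forces $\Hom_{\cX}(F_1, \cX(-,X)) = 0$, so $F_1 \in \eff\cX$. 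The real content of the proposition is (iii), where both axioms \ref{A1} and \ref{A2} together with Lemma~\ref{Weak kernels and cokernels}(ii) are essential; parts (i), (ii), and (iv) are essentially formal consequences of the Yoneda identification.
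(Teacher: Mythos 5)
Your proof is correct and follows essentially the same route as the paper's: the Yoneda calculation giving $\Hom_{\cX}(F,\cX(-,X))\cong\{h\colon X_0\to X\mid h\circ f=0\}$ for parts (i)–(ii), the extension of the presentation by a weak kernel together with \ref{A2} and Lemma \ref{Weak kernels and cokernels} for (iii), and the long exact sequence in $\Ext^*_{\md\cX}(-,\cX(-,X))$ for (iv). No gaps.
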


\begin{proof}
If $F\in \eff \cX$, then there exists an exact sequence 
\[
\cX(-,X_1)\xrightarrow{f\circ -}\cX(-,X_0)\to F\to 0
\]
where $f\colon X_1\to X_0$ is an epimorphism in $\cX$. Applying $\Hom_{\cX}(-,\cX(-,X))$ gives the exact sequence
\[
0\to \Hom_{\cX}(F,\cX(-,X))\to \cX(X_0,X)\xrightarrow{-\circ f}\cX(X_1,X)
\]
Since $f$ is an epimorphism in $\cX$, the map $-\circ f$ is a monomorphism, and hence 
\[
\Hom_{\cX}(F,\cX(-,X))=0.
\]
Conversely, assume $\Hom_{\cX}(F,\cX(-,X))=0$ for all $X\in \cX$. Applying $\Hom_{\cX}(-,\cX(-,X))$ to an exact sequence of the form 
\[
\cX(-,X'_1)\xrightarrow{f'\circ -}\cX(-,X'_0)\to F\to 0
\]
gives an exact sequence
\[
0\to \Hom_{\cX}(F,\cX(-,X))\to \cX(X'_0,X)\xrightarrow{-\circ f'}\cX(X'_1,X)
\]
Since $\Hom_{\cX}(F,\cX(-,X))=0$, the map $-\circ f'$ is a monomorphism, and hence $f'\colon X_1'\to X_0$ is an epimorphism. This proves \ref{effaceable functors:1} and \ref{effaceable functors:2}. For \ref{effaceable functors:3}, assume again we have an exact sequence $\cX(-,X_1)\xrightarrow{f\circ -}\cX(-,X_0)\to F\to 0$ where $f\colon X_1\to X_0$ is an epimorphism in $\cX$. Let $g\colon X_2\to X_1$ be a weak kernel of $f$. Then 
\[
\cX(-,X_2)\xrightarrow{g\circ -}\cX(-,X_1)\xrightarrow{f\circ -}\cX(-,X_0)\to F\to 0
\]
is exact. Applying $\Hom_{\cX}(-,\cX(-,X))$ with $X\in \cX$ gives a complex
\[
0\to \Hom_{\cX}(F,\cX(-,X))\to \cX(X_0,X)\xrightarrow{-\circ f}\cX(X_1,X)\xrightarrow{-\circ g}\cX(X_2,X)
\]
Since $f$ is an epimorphism in $\cX$, it is a weak cokernel by \ref{A2}. By Lemma \ref{Weak kernels and cokernels} \ref{Weak kernels and cokernels:2} we know that $f$ must be a weak cokernel of $g$. Therefore the sequence 
\[
\cX(X_0,X)\xrightarrow{-\circ f}\cX(X_1,X)\xrightarrow{-\circ g}\cX(X_2,X)
\]
 must be exact. This shows that $\Ext^1_{\md\cX}(F,\cX(-,X))=0$.

Finally, we show that $\eff \cX$ is a Serre subcategory. Let 
\[
0\to F_1\to F_2\to F_3\to 0
\]
be an exact sequence in $\md \cX$. Applying $\Hom_{\cX}(-,\cX(-,X))$, we get an exact sequence
\begin{multline*}
0\to \Hom_{\cX}(F_3,\cX(-,X))\to \Hom_{\cX}(F_2,\cX(-,X))\to \Hom_{\cX}(F_1,\cX(-,X)) \\
\to \Ext^1_{\md\cX}(F_3,\cX(-,X))\to \Ext^1_{\md\cX}(F_2,\cX(-,X))
\end{multline*}
Now if $F_1\in\eff\cX$ and $F_3\in \eff \cX$, then $\Hom_{\cX}(F_1,\cX(-,X))=0$ and $\Hom_{\cX}(F_3,\cX(-,X))=0$ for all $X\in \cX$ by part \ref{effaceable functors:2} of the theorem. Therefore, $\Hom_{\cX}(F_2,\cX(-,X))=0$ for all $X\in \cX$, so $F_2\in \eff \cX$. Conversely, assume $F_2\in \eff \cX$. Since $\Hom_{\cX}(F_2,\cX(-,X))=0$ for all $X\in \cX$ by \ref{effaceable functors:2} and $\Ext^1_{\md\cX}(F_2,\cX(-,X)=0$ by \ref{effaceable functors:3}, the exact sequence above correspondence to the exact sequence
\[
0\to \Hom_{\cX}(F_3,\cX(-,X))\to 0 \to \Hom_{\cX}(F_1,\cX(-,X))\to \Ext^1_{\md\cX}(F_3,\cX(-,X))\to 0
\]
Hence $\Hom_{\cX}(F_3,\cX(-,X))=0$ and therefore $\Ext^1_{\md\cX}(F_3,\cX(-,X))=0$ by \ref{effaceable functors:3}. This implies that $\Hom_{\cX}(F_1,\cX(-,X))=0$ for all $X\in \cX$, and so $F_1\in \eff\cX$ and $F_3\in \eff \cX$.
\end{proof}

Since $\eff \cX$ is a Serre subcategory of $\md \cX$, the localization $\md \cX/\eff \cX$ is an abelian category, see Theorem \ref{Serre subcategory}.  We let $q\colon \md \cX\to \md \cX/\eff \cX$ denote the localization functor. By abuse of notation, we denote an object in $\md \cX$ and its image in $\md \cX/\eff \cX$ by the same letter. Consider the functor $\Phi\colon \cX\to \md \cX/\eff \cX$ given by the composite
\[
 \cX \to \md \cX \xrightarrow{q} \md \cX/\eff \cX 
\]
where $\cX\to \md \cX$ is the Yoneda functor. It plays the role of the fully faithful functor in Definition \ref{Axiomatizing subcategories} \ref{Axiomatizing subcategories:2}. To show this, we need the following lemma.

\begin{Lemma}\label{Hom sending maps to isos}
Let $\cX$ be an additive category satisfying \ref{A1} and \ref{A2}. Let $\phi\colon F_0\to F_1$ be a morphism in $\md \cX$ with $\Ker \phi\in \eff \cX$ and $\Coker \phi \in \eff \cX$. Then 
\[
\Hom_{\cX}(F_1,\cX(-,X))\xrightarrow{-\circ \phi}\Hom_{\cX}(F_0,\cX(-,X))
\]
is an isomorphism for all $X\in \cX$.
\end{Lemma}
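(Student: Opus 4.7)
The plan is to factor $\phi$ through its image and then apply the functor $\Hom_{\cX}(-,\cX(-,X))$ to the two resulting short exact sequences, exploiting the vanishing of $\Hom$ and $\Ext^1$ into representables guaranteed by Proposition \ref{effaceable functors} \ref{effaceable functors:2} and \ref{effaceable functors:3}.

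More precisely, since $\md \cX$ is abelian (Theorem \ref{Freyd}, using \ref{A1}), the morphism $\phi$ fits into two short exact sequences
\begin{equation*}
0\to \Ker \phi \to F_0\to \Ima \phi\to 0, \qquad 0\to \Ima \phi\to F_1\to \Coker \phi\to 0.
\end{equation*}
Fix $X\in \cX$ and apply the left exact functor $\Hom_{\cX}(-,\cX(-,X))$ to each. Because $\Ker\phi$ and $\Coker\phi$ lie in $\eff \cX$, Proposition \ref{effaceable functors} \ref{effaceable functors:2} gives
\begin{equation*}
\Hom_{\cX}(\Ker \phi,\cX(-,X))=0=\Hom_{\cX}(\Coker \phi,\cX(-,X)),
\end{equation*}
and Proposition \ref{effaceable functors} \ref{effaceable functors:3} gives
\begin{equation*}
\Ext^1_{\md\cX}(\Ker \phi,\cX(-,X))=0=\Ext^1_{\md\cX}(\Coker \phi,\cX(-,X)).
\end{equation*}

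Plugging these vanishings into the long exact sequences makes the connecting maps land in (or come from) zero, so both natural maps
\begin{equation*}
\Hom_{\cX}(F_1,\cX(-,X))\to \Hom_{\cX}(\Ima \phi,\cX(-,X))\to \Hom_{\cX}(F_0,\cX(-,X))
\end{equation*}
are isomorphisms. Their composite is $-\circ \phi$, which is therefore an isomorphism. Since the image of $\Ker\phi \to F_0$ lies in $\eff\cX$ (it is a quotient of $\Ker\phi$, and $\eff\cX$ is closed under quotients by Proposition \ref{effaceable functors} \ref{effaceable functors:4}), no issue arises in applying the $\Ext$-vanishing; this is the only subtle point, and it is handled purely by the Serre property already established. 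The argument is otherwise a routine diagram chase, so I do not expect any real obstacle.
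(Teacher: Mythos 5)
Your proof is correct and follows essentially the same route as the paper: factor $\phi$ through its image, apply $\Hom_{\cX}(-,\cX(-,X))$ to the two short exact sequences, and use the $\Hom$- and $\Ext^1$-vanishing from Proposition \ref{effaceable functors} to conclude that both factors of $-\circ\phi$ are isomorphisms. (The closing remark about the image of $\Ker\phi\to F_0$ is superfluous, since that map is a monomorphism and $\Ker\phi\in\eff\cX$ is already a hypothesis, but it does no harm.)
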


\begin{proof}
Applying $\Hom_{\cX}(-,\cX(-,X))$ to the exact sequence
\[
0\to \im \phi \to F_1\to \Coker \phi \to 0
\]
gives an exact sequence
\begin{multline*}
0\to \Hom_{\cX}(\Coker \phi,\cX(-,X))\to \Hom_{\cX}(F_1,\cX(-,X)) \\
\to \Hom_{\cX}(\im \phi,\cX(-,X))\to \Ext^1_{\md\cX}(\Coker \phi,\cX(-,X))
\end{multline*}
Since $\Coker \phi\in \eff \cX$, it follows that 
\[
\Hom_{\cX}(\Coker \phi,\cX(-,X))=0 \quad \text{and} \quad \Ext^1_{\md\cX}(\Coker \phi,\cX(-,X))=0
\]
by Theorem \ref{effaceable functors}. Hence the map 
\[
\Hom_{\cX}(F_1,\cX(-,X)) \to \Hom_{\cX}(\im \phi,\cX(-,X))
\]
 is an isomorphism. Similarly, applying $\Hom_{\cX}(-,\cX(-,X))$ to the exact sequene
\[
0\to \Ker \phi \to F_0\to \im \phi\to 0
\]
and using that $\Hom_{\cX}(\Ker \phi,\cX(-,X))=0$, it follows that the map 
\[
\Hom_{\cX}(\im \phi,\cX(-,X)) \to \Hom_{\cX}(F_0,\cX(-,X))
\]
is an isomorphism. Since $\Hom_{\cX}(F_1,\cX(-,X))\xrightarrow{-\circ \phi}\Hom_{\cX}(F_0,\cX(-,X))$ is equal to the composite
\[
\Hom_{\cX}(F_1,\cX(-,X)) \xrightarrow{\cong} \Hom_{\cX}(\im \phi,\cX(-,X)) \xrightarrow{\cong} \Hom_{\cX}(F_0,\cX(-,X))
\]
of two isomorphisms, the claim follows.
\end{proof}

To simplify notation we let $\cA:= \md \cX/\eff \cX$. The localization functor $q\colon \md \cX\to \cA$ induces a map
\[
q\colon \Hom_{\cX}(F,\cX(-,X))\to \cA(F,\cX(-,X)) \quad f\mapsto q(f)
\]
for all $X\in \cX$ and $F\in \md \cX$, which we also denote by $q$ by abuse of notation. We use the previous lemma to show that it is an isomorphism.

\begin{Lemma}\label{Fully faithful}
Let $\cX$ be an additive category satisfying \ref{A1} and \ref{A2}. The map
\[
q\colon \Hom_{\cX}(F,\cX(-,X))\to \cA(F,\cX(-,X)) \quad f\mapsto q(f)
\]
is an isomorphism for all $X\in \cX$ and $F\in \md \cX$.
\end{Lemma}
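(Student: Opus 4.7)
The plan is to combine Lemma \ref{Hom sending maps to isos} with the calculus of fractions description of morphisms in the Serre quotient $\cA = \md\cX/\eff\cX$. By the Remark following Theorem \ref{Serre subcategory}, every morphism in $\cA$ from $F$ to $\cX(-,X)$ is represented by a roof $F \xleftarrow{s} G \xrightarrow{g} \cX(-,X)$ in $\md\cX$ with $\Ker s, \Coker s \in \eff\cX$, and $q(f) = 0$ for some $f\colon F\to \cX(-,X)$ in $\md\cX$ iff there exists such an $s\colon G \to F$ with $f\circ s = 0$.

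For injectivity of $q$, suppose $f_1, f_2 \in \Hom_{\cX}(F, \cX(-,X))$ satisfy $q(f_1) = q(f_2)$; setting $f = f_1 - f_2$ and using additivity of $q$, it suffices to show that $q(f) = 0$ implies $f = 0$. Choose $s\colon G\to F$ with $\Ker s, \Coker s \in \eff \cX$ and $f\circ s = 0$. By Lemma \ref{Hom sending maps to isos} the map
\[
\Hom_{\cX}(F,\cX(-,X)) \xrightarrow{-\circ s} \Hom_{\cX}(G,\cX(-,X))
\]
is an isomorphism, so in particular it is injective, and hence $f = 0$.

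For surjectivity, take an arbitrary morphism in $\cA(F,\cX(-,X))$, represented by a roof $F \xleftarrow{s} G \xrightarrow{g} \cX(-,X)$. By Lemma \ref{Hom sending maps to isos} the precomposition $-\circ s$ is a bijection, so there is a unique $f\colon F\to \cX(-,X)$ in $\md\cX$ with $f\circ s = g$. The commutative diagram then exhibits the given roof as equivalent to $F \xleftarrow{\id_F} F \xrightarrow{f} \cX(-,X)$, and this latter roof is by definition $q(f)$. Hence $q$ hits every morphism in $\cA(F,\cX(-,X))$.

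The main point where one must be careful is the appeal to the calculus of fractions: one must confirm that the multiplicative system consists exactly of the morphisms with kernel and cokernel in $\eff\cX$, and that the equivalence of roofs and the vanishing criterion for $q(f)$ take the form recalled above. Once this framework is in place, both directions reduce immediately to the isomorphism provided by Lemma \ref{Hom sending maps to isos}, and no further input from axioms \ref{A1}, \ref{A2} is needed beyond what is already absorbed into that lemma.
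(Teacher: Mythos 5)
Your argument is correct, but it takes a different route from the paper. You work with the explicit calculus-of-fractions presentation of $\cA=\md\cX/\eff\cX$: a morphism $F\to\cX(-,X)$ is a roof $F\xleftarrow{s}G\xrightarrow{g}\cX(-,X)$ with $\Ker s,\Coker s\in\eff\cX$, and both injectivity and surjectivity of $q$ then fall out of the bijectivity of $-\circ s$ supplied by Lemma \ref{Hom sending maps to isos}. The paper instead avoids roofs entirely: it uses Lemma \ref{Hom sending maps to isos} together with the universal property of the localization to descend $\Hom_{\cX}(-,\cX(-,X))$ to a functor on $\cA\op$, and then applies the Yoneda lemma to the element $1_{\cX(-,X)}$ to manufacture a natural transformation $\mu\colon\cA(-,\cX(-,X))\to\Hom_{\cX}(-,\cX(-,X))$, checking that both composites with $q$ fix the identity and hence are identities. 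The trade-off is this: your proof is more concrete and makes each direction of the bijection visibly reduce to the same isomorphism, but it leans on the fact that the class of morphisms with kernel and cokernel in $\eff\cX$ admits a two-sided calculus of fractions with the stated equivalence of roofs and vanishing criterion --- a fact the paper only records in a remark with a reference to an exercise in Weibel --- whereas the paper's Yoneda argument needs nothing about the quotient beyond its universal property. For injectivity you could also shortcut via Theorem \ref{Serre subcategory}: $q(f)=0$ forces $\im f\in\eff\cX$, and since $f$ factors through $\im f$ and $\Hom_{\cX}(\im f,\cX(-,X))=0$, one gets $f=0$ without invoking fractions at all. Either way, your proof is complete and the appeal to axioms \ref{A1} and \ref{A2} is, as you note, entirely absorbed into Lemma \ref{Hom sending maps to isos}.
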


\begin{proof}
The functor
\[
\Hom_{\cX}(-,\cX(-,X))\colon (\md \cX)\op\to \operatorname{Ab}
\]
induces a well-defined functor
\[
\Hom_{\cX}(-,\cX(-,X))\colon \cA\op\to \operatorname{Ab}
\]
by Lemma \ref{Hom sending maps to isos} and the universal property of the localization. Furthermore, $q$ induces a natural transformation $q\colon  \Hom_{\cX}(-,\cX(-,X))\to \cA(-,\cX(-,X))$ of functors $\cA\op\to \operatorname{Ab}$. Also, by Yoneda's Lemma the element $1_{\cX(-,X)}\in \Hom_{\cX}(\cX(-,X),\cX(-,X))$ corresponds to a natural transformation
\[
\mu\colon \cA(-,\cX(-,X))\to \Hom_{\cX}(-,\cX(-,X))
\]
of functors $\cA\op\to \operatorname{Ab}$. Since the composite
\[
\cA(-,\cX(-,X))\xrightarrow{\mu} \Hom_{\cX}(-,\cX(-,X))\xrightarrow{q} \cA(-,\cX(-,X))
\]
sends $1_{\cX(-,X)}\in \cA(\cX(-,X),\cX(-,X))$ to itself, it must be the identity map. Furthermore, the composite 
\[
\mu\circ q\colon \Hom_{\cX}(-,\cX(-,X))\to \Hom_{\cX}(-,\cX(-,X))
\]
must also be a natural transformation when we consider $\Hom_{\cX}(-,\cX(-,X))$ as a functor $(\md \cX)\op\to \operatorname{Ab}$. Since it sends $1_{\cX(-,X)}\in \Hom_{\cX}(\cX(-,X),\cX(-,X))$ to itself, it must be the identity map. This shows that $q$ is an isomorphism.
\end{proof}

Now we are ready to show that any additive category satisfying \ref{A1} and \ref{A2} is equivalent to a subcategory satisfying \ref{B1}, \ref{B2} and \ref{B3}. Recall that $\Phi\colon \cX\to \md \cX/\eff \cX$ is the functor given by the composite of the Yoneda embedding $\cX\to \md \cX$ and the localization functor $q\colon \md \cX\to \md \cX/\eff \cX$. For simplicity, we also denote the essential image of $\Phi$ by $\cX$ in the proof. 

\begin{Proposition}\label{essential image satisfy B1-B3}
Let $\cX$ be an additive category satisfying \ref{A1} and \ref{A2}. Then the functor $\Phi\colon \cX\to \md \cX/\eff \cX$ is fully faithful, and its essential image satisfies \ref{B1}, \ref{B2} and \ref{B3}.
\end{Proposition}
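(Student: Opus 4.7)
The plan is to verify each assertion in turn, leveraging Lemma \ref{Fully faithful} and Proposition \ref{effaceable functors} as the main tools, with the substantive work concentrated in (B3).

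First, for full faithfulness I would specialize Lemma \ref{Fully faithful} to $F = \cX(-,X')$: the lemma gives $\Hom_{\cX}(\cX(-,X'),\cX(-,X)) \cong \cA(\Phi(X'),\Phi(X))$ via $q$, and the Yoneda lemma identifies the left-hand side with $\cX(X',X)$. Unwinding the definitions, this bijection is exactly the map induced by $\Phi$ on morphisms, so $\Phi$ is fully faithful. For \ref{B1}, every $F \in \md\cX$ admits a presentation $\cX(-,X_1) \to \cX(-,X_0) \to F \to 0$, and applying the exact functor $q$ yields an epimorphism $\Phi(X_0) \twoheadrightarrow F$ in $\cA$, showing that the essential image of $\Phi$ is generating.

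For \ref{B2}, suppose $A \in \cA$ satisfies $\cA(A,\Phi(X)) = 0$ for all $X \in \cX$. By Lemma \ref{Fully faithful}, also $\Hom_{\cX}(A,\cX(-,X)) = 0$ for all $X$ (viewing $A$ as an object of $\md\cX$). By Proposition \ref{effaceable functors}\ref{effaceable functors:2}, this forces $A \in \eff\cX$, and hence $A \cong 0$ in $\cA = \md\cX/\eff\cX$ by Theorem \ref{Serre subcategory}.

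The core of the proof is \ref{B3}. Let $A \in \Omega^2_{\Phi(\cX)}(\cA)$, fixed by an exact sequence $0 \to A \to \Phi(X_1) \xrightarrow{\psi} \Phi(X_2)$ in $\cA$. By full faithfulness, $\psi = \Phi(g)$ for a unique $g \colon X_1 \to X_2$ in $\cX$. Using \ref{A1}, choose a weak kernel $f \colon X_0 \to X_1$ of $g$, so that $\cX(-,X_0) \xrightarrow{f\circ -} \cX(-,X_1) \xrightarrow{g\circ -} \cX(-,X_2)$ is exact in $\md\cX$. Applying $q$ and using exactness, the image of $\Phi(f)$ equals the kernel of $\Phi(g)$, which is $A$; this produces a canonical factorization $\Phi(X_0) \twoheadrightarrow A \hookrightarrow \Phi(X_1)$ in $\cA$. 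I claim $\Phi(X_0) \to A$ is a right $\Phi(\cX)$-approximation. Given $\varphi \colon \Phi(Y) \to A$ with $Y \in \cX$, composing with the monomorphism $A \hookrightarrow \Phi(X_1)$ gives a morphism $\Phi(Y) \to \Phi(X_1)$ which, by full faithfulness, equals $\Phi(h)$ for some $h \colon Y \to X_1$. The composite $\Phi(g)\circ \Phi(h)$ is zero in $\cA$, so again by full faithfulness $g\circ h = 0$ in $\cX$. Since $f$ is a weak kernel of $g$, there is $h' \colon Y \to X_0$ with $f\circ h' = h$; then $\Phi(h') \colon \Phi(Y) \to \Phi(X_0)$ provides the required factorization, because composing with $\Phi(X_0) \to A \hookrightarrow \Phi(X_1)$ gives $\Phi(f\circ h') = \Phi(h)$ and $A \hookrightarrow \Phi(X_1)$ is monic.

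The subtle point I expect to be the main obstacle is the interaction between morphisms in the localization $\cA$ and their representatives in $\cX$ — specifically, being able to conclude that $gh = 0$ in $\cX$ from $\Phi(gh) = 0$ in $\cA$, and to lift a morphism $\varphi \colon \Phi(Y) \to A$ against the monomorphism $A \hookrightarrow \Phi(X_1)$ to an honest morphism in $\cX$. Both are controlled by Lemma \ref{Fully faithful}, which pins down maps \emph{into} objects of the form $\cX(-,X)$; combined with the exactness of $q$, this should make the calculus-of-fractions manipulations implicit in the argument unnecessary and allow everything to be verified by Yoneda-style reasoning.
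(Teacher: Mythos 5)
Your proposal is correct and follows essentially the same route as the paper: full faithfulness and \ref{B2} via Lemma \ref{Fully faithful} combined with Proposition \ref{effaceable functors}, \ref{B1} via exactness of $q$ applied to a projective presentation, and \ref{B3} by lifting the map $\Phi(X_1)\to\Phi(X_2)$ to a morphism $g$ in $\cX$, taking a weak kernel $f$ of $g$, and checking that the induced epimorphism $\Phi(X_0)\to A$ is a right approximation by cancelling against the monomorphism $A\hookrightarrow\Phi(X_1)$. The subtle points you flag (deducing $gh=0$ in $\cX$ from its vanishing in the localization, and lifting maps into representables) are exactly the ones the paper handles with Lemma \ref{Fully faithful}.
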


\begin{proof}
By Lemma \ref{Fully faithful} the canonical map
\[
\Hom_{\cX}(\cX(-,X'),\cX(-,X))\to \cA(\cX(-,X'),\cX(-,X))
\]
is an isomorphism for all $X,X'\in \cX$, where $\cA=\md \cX/\eff \cX$. Since the Yoneda embedding is fully faithful, the functor $\Phi$ must therefore be fully faithful. Now let $F\in \md \cX$ be arbitrary, and choose an epimorphism $p\colon\cX(-,X)\to F$ in $\md \cX$. Since $q$ is exact, $q(p)$ is an epimorphism in $\cA$. This shows \ref{B1}, i.e. that $\cX$ is generating. Next, assume $\cA(F,\cX(-,X))=0$ for all $X\in \cX$. Then $\Hom_{\cX}(F,\cX(-,X))=0$ for all $X\in \cX$ by Lemma \ref{Fully faithful}, and therefore $F\in \eff \cX$ by Lemma \ref{effaceable functors}. It follows that $F\cong 0$ considered as an object in $\cA$, which shows \ref{B2}. Finally, to prove \ref{B3}, assume $F\in \Omega^2_{\cX}(\cA)$. Choose an exact sequence
\[
0\to F\xrightarrow{\phi} \cX(-,X_1)\xrightarrow{\psi}\cX(-,X_0)
\]
in $\cA$. By Lemma \ref{Fully faithful} the maps $\phi$ and $\psi$ can be lifted to morphisms in $\md \cX$ (which we denote by the same name). In particular, we have a morphism $g\colon X_1\to X_0$ in $\cX$ such that $\psi=g\circ-$. Let $f$ be the weak kernel of $g$ in $\cX$. Then the sequence of functors
\[
\cX(-,X_2)\xrightarrow{f\circ -}\cX(-,X_1)\xrightarrow{g\circ -}\cX(-,X_0)
\]
is exact in $\md \cX$. Since $q$ is an exact functor, the sequence is also exact in $\cA$. Furthermore, since $F$ is the kernel of $g\circ -$ in $\cA$, there exists an epimorphism $\xi\colon \cX(-,X_2)\to F$ in $\cA$ such that $f\circ -=\phi\circ \xi$. Now let $\kappa\colon \cX(-,X)\to F$ be an arbitrary morphism in $\cA$ with $X\in \cX$. Then the composite $\phi\circ \kappa\colon \cX(-,X)\to \cX(-,X_1)$ can be written as $h\circ -\colon \cX(-,X)\to \cX(-,X_1)$ for some morphism $h\colon X\to X_1$ in $\cX$, by Lemma \ref{Fully faithful}. Since $g\circ h=0$, and $f$ is a weak kernel of $g$, it follows that $h$ factors through $f$. Therefore the map $\phi \circ \kappa$ factors through $f\circ -\colon \cX(-,X_2)\to \cX(-,X_1)$. Since $\phi$ is a monomorphism, it follows that $\kappa$ factors through $\xi$. Hence $\xi$ is a right $\cX$-approximation, which proves the claim.
\end{proof}

Our next goal is to prove the uniqueness of the ambient abelian category as required in Definition \ref{Axiomatizing subcategories} \ref{Axiomatizing subcategories:3}. To this end, we need the following results describing a universal property of $\md \cX/\eff \cX$.

\begin{Lemma}\label{universal property quotient}
Let $\cX$ be an additive category satisfying \ref{A1} and \ref{A2}. The following hold:
\begin{enumerate}
\item\label{universal property quotient:1} The functor $\Phi\colon \cX\to \md \cX/\eff \cX$ preserves epimorphisms and sends a sequence $X_2\xrightarrow{f}X_1\xrightarrow{g}X_0$ in $\cX$ with $f$ a weak kernel of $g$ to an exact sequence
\[
\Phi(X_2)\xrightarrow{\Phi(f)}\Phi(X_1)\xrightarrow{\Phi(g)}\Phi(X_0)
\]
in $\md \cX/\eff \cX$;
\item\label{universal property quotient:2} Let $\cB$ be an abelian category and let $\Psi\colon \cX\to \cB$ be an additive functor which preserves epimorphisms and sends a sequence $X_2\xrightarrow{f}X_1\xrightarrow{g}X_0$ in $\cX$ with $f$ a weak kernel of $g$ to an exact sequence
\[
\Psi(X_2)\xrightarrow{\Psi(f)}\Psi(X_1)\xrightarrow{\Psi(g)}\Psi(X_0)
\]
in $\cB$. Then there exists an exact functor $\md \cX/\eff \cX\to \cB$ extending $\Psi$, which is unique up to natural isomorphism.
\end{enumerate}
\end{Lemma}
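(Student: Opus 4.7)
Part \ref{universal property quotient:1} is essentially immediate from what has already been set up. If $f\colon X_2\to X_1$ is a weak kernel of $g\colon X_1\to X_0$, then by definition $\cX(-,X_2)\xrightarrow{f\circ-}\cX(-,X_1)\xrightarrow{g\circ-}\cX(-,X_0)$ is exact at the middle term in $\md\cX$, and exactness is preserved by the exact localization functor $q$ (Theorem \ref{Serre subcategory}). For the epimorphism part, if $g\colon X_1\to X_0$ is an epimorphism in $\cX$, then axiom \ref{A2} says it is a weak cokernel, so by Lemma \ref{Weak kernels and cokernels} \ref{Weak kernels and cokernels:2} it is the weak cokernel of its own weak kernel. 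Consequently the cokernel of $g\circ-\colon \cX(-,X_1)\to \cX(-,X_0)$ in $\md\cX$ is presented by an epimorphism of $\cX$, hence lies in $\eff \cX$; applying the exact functor $q$ kills this cokernel, so $\Phi(g)$ is an epimorphism in $\md\cX/\eff\cX$.

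For part \ref{universal property quotient:2} the plan is to first extend $\Psi$ to a right exact functor $\widetilde\Psi\colon \md\cX\to \cB$. For each $F\in \md\cX$ I would pick a presentation $\cX(-,X_1)\xrightarrow{f\circ-}\cX(-,X_0)\to F\to 0$ and set $\widetilde\Psi(F):=\Coker(\Psi(f)\colon \Psi(X_1)\to \Psi(X_0))$. Independence of the presentation and functoriality follow from the standard argument: by Yoneda, any morphism between presentations is induced by morphisms of the representing objects in $\cX$ (well-defined up to homotopy through further representables), and $\Psi$ transports this data to a canonical map between the cokernels in $\cB$. By construction $\widetilde\Psi$ is additive, right exact, and restricts to $\Psi$ along the Yoneda embedding.

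The main technical step is to verify that $\widetilde\Psi$ is actually exact, not merely right exact. Given a short exact sequence $0\to F_1\to F_2\to F_3\to 0$ in $\md\cX$, a horseshoe-type construction produces compatible presentations with middle term a direct sum of the outer ones, fitting into a $3\times 3$ diagram in $\md\cX$ whose columns of representables are split exact. Extending each column one step further by a weak kernel in $\cX$ and invoking the hypothesis that $\Psi$ sends weak kernel sequences in $\cX$ to exact sequences in $\cB$, a snake lemma computation in $\cB$ upgrades right exactness of $\widetilde\Psi$ to full exactness on the short exact sequence. The vanishing $\widetilde\Psi|_{\eff\cX}=0$ is then immediate: if $F\in \eff \cX$ admits a presentation whose first map $f$ is an epimorphism in $\cX$, then $\Psi(f)$ is an epimorphism in $\cB$ by assumption, so $\widetilde\Psi(F)=\Coker \Psi(f)=0$.

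Finally, since $\widetilde\Psi$ vanishes on the Serre subcategory $\eff\cX$, it inverts every morphism whose kernel and cokernel lie in $\eff\cX$, so by the universal property of the Serre quotient it factors through an exact functor $\bar\Psi\colon \md\cX/\eff\cX\to \cB$ extending $\Psi$. Uniqueness up to natural isomorphism follows because any exact extension $\md\cX/\eff\cX\to \cB$ of $\Psi$ composed with $q$ is a right exact extension of $\Psi$ to $\md\cX$, and any two such extensions agree canonically on representables (by assumption) and hence on all of $\md\cX$ via presentations. The main obstacle I expect is the bookkeeping in the horseshoe-plus-snake argument needed to promote right exactness of $\widetilde\Psi$ to exactness using only the weak-kernel-preservation hypothesis on $\Psi$.
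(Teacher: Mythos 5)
Your proposal is correct and follows essentially the same route as the paper: extend $\Psi$ to a right exact functor $\tilde{\Psi}$ on $\md\cX$, use the weak-kernel-preservation hypothesis to upgrade this to exactness, observe that $\tilde{\Psi}$ kills $\eff\cX$ because effaceable functors are presented by epimorphisms of $\cX$, and then factor through the Serre quotient. The only difference is one of packaging: the paper cites Krause for the existence and exactness of $\tilde{\Psi}$ and derives part \ref{universal property quotient:1} from Proposition \ref{essential image satisfy B1-B3} together with Lemma \ref{B1,B2,B3 implies A1,A2}, whereas you sketch these standard arguments directly.
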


\begin{proof}
Since the essential image of $\Phi$ satisfies \ref{B1}, \ref{B2} and \ref{B3} by Proposition \ref{essential image satisfy B1-B3}, part \ref{universal property quotient:1} follows from Lemma \ref{B1,B2,B3 implies A1,A2}. For part \ref{universal property quotient:2}, let $\Psi$ be a functor as in the lemma. Then there exists a right exact functor $\tilde{\Psi}\colon \md\cX\to \cB$ extending $\Psi$, see Property 2.1 in \cite{Kra98}. Since $\Psi$ sends weak kernels to exact sequences, it follows that $\tilde{\Psi}$ is an exact functor by Lemma 2.5 in \cite{Kra98}. Now let $F\in \md \cX$ be arbitrary, and choose an exact sequence $\cX(-,X_1)\xrightarrow{f\circ -}\cX(-,X_0)\to F\to 0$. Applying $\tilde{\Psi}$ gives an exact sequence
\[
\Psi(X_1)\xrightarrow{\Psi(f)}\Psi(X_0)\to \tilde{\Psi}(F)\to 0
\]
in $\cB$. If $F\in \eff \cX$, then $f\colon X_1\to X_0$ is an epimorphism in $\cX$ by Proposition \ref{effaceable functors} \ref{effaceable functors:1}. Therefore $\Psi(f)$ is an epimorphism in $\cB$, which implies that $\tilde{\Psi}(F)=0$. This shows that $\eff \cX \subseteq \Ker \tilde{\Psi}$. Therefore, by \cite[Corollaire 2 and 3 on page 368-369]{Gab62} there exists an exact functor
\[
\overline{\Psi}\colon \md\cX/\eff \cX\to \cB
\]
satisfying $\overline{\Psi}\circ q=\tilde{\Psi}$. The fact that $\tilde{\Psi}$ is unique follows readily from the fact that $\cX$ is generating in $\md \cX/\eff \cX$.
\end{proof}

Now we can show the uniqueness of the ambient abelian category.

\begin{Proposition}\label{uniqueness ambient abelian}
Let $\cA'$ be an abelian category and $\cX$ an additive subcategory of $\cA'$ which satisfies \ref{B1}, \ref{B2} and \ref{B3}. Then there exists an equivalence $\md \cX/\eff \cX\xrightarrow{\cong} \cA'$ unique up to natural isomorphism which makes the diagram
\begin{equation*}
\begin{tikzcd}
\md\cX/\eff \cX \arrow{rr}{\cong} & & \cA' \\
 & \arrow{lu}{\Phi} \arrow{ru}[swap]{\operatorname{inclusion}}\cX & 
\end{tikzcd}
\end{equation*}
commute.
\end{Proposition}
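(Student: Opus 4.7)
The plan is to apply Lemma \ref{universal property quotient} \ref{universal property quotient:2} to produce the required functor and then show it is an equivalence. Since $\cX\subseteq\cA'$ satisfies \ref{B1}, \ref{B2} and \ref{B3}, Lemma \ref{B1,B2,B3 implies A1,A2} \ref{B1,B2,B3 implies A1,A2:1} gives that the inclusion $\iota\colon\cX\to\cA'$ preserves epimorphisms and takes weak kernel sequences in $\cX$ to exact sequences in $\cA'$. Hence Lemma \ref{universal property quotient} \ref{universal property quotient:2} produces an exact functor $\overline{\iota}\colon\md\cX/\eff\cX\to\cA'$ with $\overline{\iota}\circ\Phi\cong\iota$, and the uniqueness clause in that lemma delivers the uniqueness of the equivalence claimed in the statement once we verify that $\overline{\iota}$ is one. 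Essential surjectivity is easy: iterating \ref{B1} on any $A\in\cA'$ yields a presentation $X_1\to X_0\to A\to 0$ in $\cA'$ with $X_0,X_1\in\cX$, and $\overline{\iota}$ applied to the cokernel of $\cX(-,X_1)\to\cX(-,X_0)$ returns $A$ by exactness.

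For fully faithfulness I plan a two-step reduction. Given $F,G\in\md\cX/\eff\cX$, I pick a presentation $\cX(-,X_1)\to\cX(-,X_0)\to F\to 0$ and compare the left exact sequences obtained by applying $(\md\cX/\eff\cX)(-,G)$ and $\cA'(-,\overline{\iota}(G))$; the five-lemma then reduces the claim to proving that $\overline{\iota}\colon(\md\cX/\eff\cX)(\cX(-,X),G)\to\cA'(X,\overline{\iota}(G))$ is an isomorphism for every $X\in\cX$ and $G\in\md\cX/\eff\cX$.

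The key input for injectivity will be the identification $\Ker\tilde{\iota}=\eff\cX$, where $\tilde{\iota}\colon\md\cX\to\cA'$ is the right exact extension of $\iota$ from the proof of Lemma \ref{universal property quotient} \ref{universal property quotient:2}. The inclusion $\eff\cX\subseteq\Ker\tilde{\iota}$ is established in that proof, and the reverse follows because $\tilde{\iota}(F)=\Coker(X_1\to X_0)$ in $\cA'$ and $X_1\to X_0$ is an epimorphism in $\cA'$ iff it is an epimorphism in $\cX$ (again by Lemma \ref{B1,B2,B3 implies A1,A2} \ref{B1,B2,B3 implies A1,A2:1}) iff $F\in\eff\cX$. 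A morphism $\psi\colon\cX(-,X)\to G$ in the quotient with $\overline{\iota}(\psi)=0$ can be represented by a left fraction $\cX(-,X)\xrightarrow{g}G'\xleftarrow{t}G$ with $t$ having kernel and cokernel in $\eff\cX$; then $\tilde{\iota}(g)=0$, so $\tilde{\iota}(\im g)=0$ by exactness of $\tilde{\iota}$, hence $\im g\in\eff\cX$, and $\psi=0$ by Theorem \ref{Serre subcategory}.

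Surjectivity is the main obstacle. Given $f\colon X\to\overline{\iota}(G)$ and a presentation $\cX(-,Y_1)\to\cX(-,Y_0)\to G\to 0$ mapping under $\overline{\iota}$ to $Y_1\to Y_0\xrightarrow{\pi_0}\overline{\iota}(G)\to 0$, the plan is to form the pullback $P=X\times_{\overline{\iota}(G)}Y_0$ in $\cA'$ and cover it by an object $X'\in\cX$ using \ref{B1}. This produces an epimorphism $\pi\colon X'\to X$ in $\cA'$ (hence in $\cX$ by Lemma \ref{B1,B2,B3 implies A1,A2} \ref{B1,B2,B3 implies A1,A2:1}) and a map $\sigma\colon X'\to Y_0$ with $\pi_0\circ\sigma=f\circ\pi$; the composite $\psi'\colon\cX(-,X')\xrightarrow{\sigma\circ-}\cX(-,Y_0)\to G$ then satisfies $\overline{\iota}(\psi')=f\circ\pi$. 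Since $\cX(-,X')\to\cX(-,X)$ is an epimorphism in $\md\cX/\eff\cX$, to factor $\psi'$ through $\cX(-,X)$ I need to show that $\psi'$ restricts to zero on $K:=\Ker(\cX(-,X')\to\cX(-,X))$. Covering $K$ by a representable using \ref{B1} in $\md\cX/\eff\cX$ (valid by Proposition \ref{essential image satisfy B1-B3}) and using the injectivity from the previous paragraph reduces this vanishing to the statement that $\overline{\iota}(K)\to X'\xrightarrow{f\circ\pi}\overline{\iota}(G)$ is zero in $\cA'$, which is immediate since $\overline{\iota}(K)=\Ker\pi$ and $f\circ\pi$ factors through $\pi$. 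The resulting $\psi\colon\cX(-,X)\to G$ then satisfies $\overline{\iota}(\psi)=f$ after cancelling the epimorphism $\pi$.
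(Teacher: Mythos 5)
Your proof is correct, and it follows the same overall skeleton as the paper's: obtain the exact functor $\overline{\iota}$ from Lemma \ref{universal property quotient} \ref{universal property quotient:2}, get density from \ref{B1}, establish $\Ker\tilde{\iota}=\eff\cX$ exactly as the paper does, and deduce faithfulness from exactness of $\overline{\iota}$ (your fraction-representation phrasing is just a more explicit version of the paper's "$\overline{\Psi}(\phi)=0$ iff $\overline{\Psi}(\im\phi)=0$" step). Where you genuinely diverge is in the fullness argument. The paper works directly with an arbitrary morphism $\phi\colon\overline{\Psi}(F)\to\overline{\Psi}(F')$: it takes presentations of both $F$ and $F'$, covers the kernel of $\begin{bmatrix}\phi\circ\overline{\Psi}(\pi) & \overline{\Psi}(\pi')\end{bmatrix}$ in $\cA'$, and manufactures a common target $F''$ with maps $\phi'\colon F\to F''$ and $\phi''\colon F'\to F''$ such that $\phi''$ becomes invertible in the quotient, exhibiting the preimage as $\phi''^{-1}\circ\phi'$. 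You instead reduce to representable source via the five lemma applied to the two left-exact Hom-sequences, and then lift $f\colon X\to\overline{\iota}(G)$ by pulling back along $\pi_0$, covering the pullback by an object of $\cX$, and cancelling the resulting epimorphism using the injectivity you have already proved. Both arguments rest on the same inputs ($\Ker\tilde{\iota}=\eff\cX$, exactness, and \ref{B1}); yours buys a cleaner modular structure (fullness and faithfulness are handled uniformly once the representable case is done), while the paper's avoids the five-lemma reduction and directly produces the preimage as a roof. One small point worth making explicit in your write-up: the uniqueness clause of Lemma \ref{universal property quotient} \ref{universal property quotient:2} applies to any equivalence making the triangle commute because an equivalence of abelian categories is automatically exact — you assert this correctly but it deserves a sentence.
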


\begin{proof}
By Lemma \ref{B1,B2,B3 implies A1,A2} and Lemma \ref{universal property quotient} \ref{universal property quotient:2} we have exact functors 
\[
\tilde{\Psi}\colon \md \cX\to \cA' \quad \text{and} \quad \overline{\Psi}\colon \md \cX/\eff \cX\to \cA'
\]
extending the inclusion $\cX\subseteq \cA'$. Our goal is to show that $\overline{\Psi}$ is an equivalence. Since $\cX$ is a generating subcategory of $\cA'$, it follows immediately that $\overline{\Psi}$ is dense. We show that $\overline{\Psi}$ is faithful. Let $F\in \md \cX$ be arbitrary and let $\cX(-,X_1)\xrightarrow{f\circ -}\cX(-,X_0)\to F\to 0$ be an exact sequence in $\md \cX$. Applying $\tilde{\Psi}$, we get an exact sequence
\[
X_1\xrightarrow{f}X_0\to \tilde{\Psi}(F)\to 0
\]
in $\cA'$. Hence, if $\tilde{\Psi}(F)\cong 0$, then  $X_1\xrightarrow{f}X_0$ is surjective in $\md \cX/\eff \cX$. Therefore, $f$ must be surjective in $\cX$, so $F\in \eff \cX$. This shows that $\Ker \tilde{\Psi}=\eff \cX$. Now if $\phi$ is a morphism in $\md \cX/\eff \cX$, then $\overline{\Psi}(\phi)=0$ if and only $\overline{\Psi}(\im \phi)=0$ since $\overline{\Psi}$ is exact. Also, since $\Ker \tilde{\Psi}=\eff \cX$, it follows that $\im \phi \in \eff \cX$, which implies that $\phi=0$ in $\md \cX/\eff \cX$. This shows that $\overline{\Psi}$ is faithful.

 It now only remains to show that $\overline{\Psi}$ is full. Let $F,F'\in \md \cX$, and let 
\[
\cX(-,X_1)\xrightarrow{f\circ -}\cX(-,X_0)\xrightarrow{\pi}F\to 0 \quad \text{and} \quad  \cX(-,X'_1)\xrightarrow{f'\circ -}\cX(-,X'_0)\xrightarrow{\pi'}F'\to 0 
\] 
be exact sequences in $\md \cX$. Let $\phi\colon \overline{\Psi}(F)\to \overline{\Psi}(F')$ be a morphism in $\cA'$. We then get a commutative diagram
\begin{equation*}
\begin{tikzcd}[row sep =5ex, column sep =11ex, ampersand replacement =\&]
X_1 \arrow{r}{f} \arrow{d}{} \& X_0 \arrow{r}{\overline{\Psi}(\pi)} \arrow{d}{\begin{bmatrix}
1 \\ 0\end{bmatrix}} \& \overline{\Psi}(F) \arrow{r}{} \arrow{d}{\phi} \& 0 \\
K \arrow{r}{} \& X_0\oplus X_0'\arrow{r}{\begin{bmatrix}\phi\circ \overline{\Psi}(\pi)& \overline{\Psi}(\pi')\end{bmatrix}} \& \overline{\Psi}(F') \arrow{r}{} \& 0 \\
X_1'\arrow{r}{f'} \arrow{u}{} \& X_0'\arrow{r}{\overline{\Psi}(\pi')} \arrow{u}{\begin{bmatrix}0 \\ 1\end{bmatrix}} \& \overline{\Psi}(F') \arrow{r}{} \arrow{u}{1} \& 0
\end{tikzcd}
\end{equation*}
where the rows are exact and $K=\Ker \begin{bmatrix}\phi\circ \overline{\Psi}(\pi)& \overline{\Psi}(\pi')\end{bmatrix}$. Choose an epimorphism $\tilde{X_1}\to K$ in $\cA'$ with $\tilde{X_1}\in \cX$ and set $X_1''=X_1\oplus X_1'\oplus \tilde{X}_1$. Then we can factorize the map $X_1\oplus X_1'\to K$ as a composition $X_1\oplus X_1'\to X_1''\to K$ where $X_1''\to K$ is an epimorphism. Let $\begin{bmatrix}g \\ h\end{bmatrix}\colon X_1''\to X_0\oplus X_0'$ denote the map obtained by composing this epimorphism $X_1''\to K$ with the inclusion $K\to X_0\oplus X_0'$, let 
\[
F''=\Coker (\cX(-,X_1'')\xrightarrow{\begin{bmatrix}g \\ h\end{bmatrix}\circ -}\cX(-,X_0\oplus X_0'))
\]
and let $\pi''\colon \cX(-,X_0\oplus X_0')\to F''$ denote the projection. Then we get morphisms $\phi'\colon F\to F''$ and $\phi''\colon F'\to F''$ making the diagram
\begin{equation*}
\begin{tikzcd}[row sep =5ex, column sep =11ex, ampersand replacement =\&]
\cX(-,X_1) \arrow{r}{f\circ -} \arrow{d}{} \& \cX(-,X_0) \arrow{r}{\pi} \arrow{d}{\begin{bmatrix}
1 \\ 0\end{bmatrix}\circ -} \& F \arrow{r}{} \arrow{d}{\phi'} \& 0 \\
\cX(-,X_1'') \arrow{r}{\begin{bmatrix}g \\ h\end{bmatrix}\circ -} \& \cX(-,X_0\oplus X_0')\arrow{r}{\pi''} \& F'' \arrow{r}{} \& 0 \\
\cX(-,X_1')\arrow{r}{f'\circ -} \arrow{u}{} \& \cX(-,X_0')\arrow{r}{\pi'} \arrow{u}{\begin{bmatrix}0 \\ 1\end{bmatrix}\circ -} \&F' \arrow{r}{} \arrow{u}{\phi''} \& 0
\end{tikzcd}
\end{equation*}
commutative. Note furthermore that $\tilde{\Psi}(\phi'')$ is an isomorphism, hence 
\[
\tilde{\Psi}(\Ker \phi'')\cong 0\cong \tilde{\Psi}(\Coker \phi'')
\]
since $\tilde{\Psi}$ is exact. This implies that $\Ker \phi''\in \eff \cX$ and $\Coker \phi''\in \eff \cX$ since $\Ker \tilde{\Psi}=\eff \cX$. Therefore, $\phi''$ is an isomorphism in $\md \cX/\eff \cX$, and hence admits an inverse $\phi''^{-1}$. It is now clear that 
\[
\phi=\overline{\Psi}(\phi''^{-1})\circ \overline{\Psi}(\phi')=\overline{\Psi}(\phi''^{-1}\circ \phi')
\]
which shows that $\overline{\Psi}$ is full. This proves the claim.
\end{proof}

\begin{proof}[Proof of Theorem \ref{Embedding Theorem}]
This follows from Lemma \ref{B1,B2,B3 implies A1,A2}, Proposition \ref{essential image satisfy B1-B3}, and Proposition \ref{uniqueness ambient abelian}.
\end{proof}
As a consequence we get that the ambient abelian category also satisfies a universal property.

\begin{Corollary}\label{universal property ambient}
Let $\cA'$ an abelian category and let $\cX$ an additive subcategory of $\cA'$ which satisfies \ref{B1}, \ref{B2} and \ref{B3}. The following hold:
\begin{enumerate}
\item\label{universal property ambient:1} The inclusion functor $\cX\to \cA'$ preserves epimorphisms and sends a sequence $X_2\xrightarrow{f}X_1\xrightarrow{g}X_0$ in $\cX$ with $f$ a weak kernel of $g$ to an exact sequence in $\cA'$;
\item\label{universal property ambient:2} Let $\cB$ be an abelian category and let $\Psi\colon \cX\to \cB$ be an additive functor which preserves epimorphisms and sends a sequence $X_2\xrightarrow{f}X_1\xrightarrow{g}X_0$ in $\cX$ with $f$ a weak kernel of $g$ to an exact sequence
\[
\Psi(X_2)\xrightarrow{\Psi(f)}\Psi(X_1)\xrightarrow{\Psi(g)}\Psi(X_0)
\]
in $\cB$. Then there exists an exact functor $\cA'\to \cB$ extending $\Psi$, which is unique up to natural isomorphism.
\end{enumerate}
\end{Corollary}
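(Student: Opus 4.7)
The plan is to deduce the corollary directly from the results already established for the quotient category $\md\cX/\eff\cX$ by transporting them along the equivalence given in Proposition~\ref{uniqueness ambient abelian}. Part \ref{universal property ambient:1} requires no new argument: by Lemma \ref{B1,B2,B3 implies A1,A2} \ref{B1,B2,B3 implies A1,A2:1} the inclusion $\cX\to \cA'$ already preserves epimorphisms and carries any weak-kernel sequence $X_2\xrightarrow{f}X_1\xrightarrow{g}X_0$ to an exact sequence, which is exactly what part \ref{universal property ambient:1} asserts.

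For part \ref{universal property ambient:2}, I would first observe that Lemma \ref{B1,B2,B3 implies A1,A2} \ref{B1,B2,B3 implies A1,A2:2} gives that $\cX$ satisfies \ref{A1} and \ref{A2}, so the construction $\md\cX/\eff\cX$ from Section 4 is available and Proposition \ref{uniqueness ambient abelian} supplies an equivalence $\Xi\colon \md\cX/\eff\cX\xrightarrow{\cong} \cA'$ together with a natural isomorphism $\Xi\circ \Phi\cong \operatorname{inclusion}$. Fix a quasi-inverse $\Xi^{-1}\colon \cA'\xrightarrow{\cong}\md\cX/\eff\cX$. Given a functor $\Psi\colon \cX\to \cB$ as in the hypothesis, Lemma \ref{universal property quotient} \ref{universal property quotient:2} produces an exact functor $\overline{\Psi}\colon \md\cX/\eff\cX\to \cB$ with $\overline{\Psi}\circ \Phi\cong \Psi$. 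I would then define the desired extension as the composite
\[
\cA'\xrightarrow{\Xi^{-1}} \md\cX/\eff\cX \xrightarrow{\overline{\Psi}} \cB,
\]
which is exact as a composite of exact functors and whose restriction to $\cX$ is naturally isomorphic to $\overline{\Psi}\circ \Phi\cong \Psi$ via the isomorphism $\Xi\circ \Phi\cong \operatorname{inclusion}$.

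For uniqueness, suppose $\Theta\colon \cA'\to \cB$ is any exact functor extending $\Psi$. Then $\Theta\circ \Xi\colon \md\cX/\eff\cX\to \cB$ is an exact functor whose restriction along $\Phi$ is naturally isomorphic to $\Psi$, so by the uniqueness clause in Lemma \ref{universal property quotient} \ref{universal property quotient:2} we have $\Theta\circ \Xi\cong \overline{\Psi}$. Composing on the right with $\Xi^{-1}$ yields $\Theta\cong \overline{\Psi}\circ \Xi^{-1}$, which is the extension constructed above. Since all ingredients are by-products of results already proved, I expect no serious obstacles here; the only thing to double-check is that the two natural isomorphisms (the one coming from Proposition \ref{uniqueness ambient abelian} and the one coming from Lemma \ref{universal property quotient}) can be combined coherently, but this is routine 2-categorical bookkeeping.
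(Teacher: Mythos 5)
Your proposal is correct and follows essentially the same route as the paper, which simply cites Lemma \ref{universal property quotient} and Proposition \ref{uniqueness ambient abelian} and leaves the transport along the equivalence implicit; you have merely spelled out the details, including the uniqueness argument via composing with the equivalence and its quasi-inverse.
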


\begin{proof}
This follows immediately from Lemma \ref{universal property quotient} and Proposition \ref{uniqueness ambient abelian}.
\end{proof}

\section{Functorially finite generating cogenerating subcategories}

Let $\cX$ be an additive category. We would like to find intrinsic axioms on $\cX$ which axiomatizes  functorially finite generating and cogenerating subcategories of abelian categories, as in Definition \ref{Axiomatizing subcategories}. To this end, by Theorem \ref{Embedding Theorem} we know that $\cX$ must satisfy \ref{A1} and \ref{A2} and their duals
\begin{enumerate}[label=(A\arabic*)$^{\operatorname{op}}$]
\item\label{A1op} $\cX$ has weak cokernels;
\item\label{A2op} Any monomorphism in $\cX$ is a weak kernel.
\end{enumerate}
To continue, we investigate the duality functor on $\md \cX$. For each $F\in \md \cX$ choose a projective presentation 
\[
\cX(-,X_1)\xrightarrow{f\circ -}\cX(-,X_0)\to F\to 0
\]
in $\md \cX$. Applying the contravariant Yoneda functor to $f\colon X_1\to X_0$ gives a map \\$-\circ f\colon \cX(X_0,-)\to \cX(X_1,-)$ in $\md \cX\op$. We define $F^*=\Ker(-\circ f)$ and $\Tr F=\Coker (-\circ f)$, so that we have an exact sequence
\[
0\to F^*\to \cX(X_0,-)\xrightarrow{-\circ f}\cX(X_1,-)\to \Tr F\to 0.
\]
in $\md \cX\op$. Dually, for $F'\in \md \cX\op$ we choose a projective presentation 
\[
\cX(X_0',-)\xrightarrow{-\circ f'}\cX(X_1',-)\to F'\to 0
\]
and define $\Tr F'$ and $F'^*$ by the exact sequence
\[
0\to F'^*\to \cX(-,X_1')\xrightarrow{f'\circ -}\cX(-,X_0')\to \Tr F'\to 0.
\]
Note that we have natural isomorphisms
\[
(-)^*\cong\Hom_{\cX}(-,\cX)\colon \md \cX \to \md \cX\op  
\]
and 
\[
(-)^*\cong\Hom_{\cX\op}(-,\cX\op)\colon \md \cX\op \to \md \cX  
\]
which we use to identify these functors, where $\Hom_{\cX}(-,\cX)$ and $\Hom_{\cX\op}(-,\cX\op)$ denote the functors given by $\Hom_{\cX}(-,\cX)(F)=\Hom_{\cX}(F,-)|_{\cX}$ and $\Hom_{\cX\op}(-,\cX\op)(G)=\Hom_{\cX\op}(G,-)|_{\cX\op}$ for $F\in \md\cX$ and $G\in \md\cX\op$, respectively. It follows that the functors $(-)^*\colon \md \cX \to \md \cX\op$ and $(-)^*\colon \md \cX\op \to \md \cX$ form an adjoint pair. The unit and counit are part of exact sequences 
\begin{align}\label{unit of (-)^*}
& 0\to \Ext^2_{\md \cX\op}(\operatorname{Tr}F,\cX\op)\to F\to F^{**}\to \Ext^1_{\md \cX\op}(\operatorname{Tr}F,\cX\op)\to 0 
\end{align}
\begin{align}\label{counit of (-)^*}
 0\to \Ext^2_{\md \cX}(\operatorname{Tr}F,\cX)\to F\to F^{**}\to \Ext^1_{\md\cX}(\operatorname{Tr}F,\cX)\to 0
\end{align}
see for example Proposition 6.3 in \cite{Aus66} in the case $\cX$ is a ring. Now by Lemma \ref{Hom sending maps to isos} the functor $(-)^*$ sends morphisms in $\md \cX$ with kernel and cokernel in $\eff \cX$ to isomorphisms in $\md \cX\op$. Together with the dual statement this implies that there are induced contravariant functors
\[
\md \cX/\eff \cX\to \md \cX\op \quad \text{and} \quad \md \cX\op/\eff \cX\op\to \md \cX
\]
making the following diagrams commute
\begin{equation*}
\begin{tikzcd}
\md\cX \arrow{r}{(-)^*} \arrow{d}{q} & \md \cX\op \\
 \arrow{ru}{} \md \cX/\eff \cX & 
\end{tikzcd} \quad 
\begin{tikzcd}
\md\cX\op \arrow{r}{(-)^*} \arrow{d}{q} & \md \cX \\
 \arrow{ru}{} \md \cX\op/\eff \cX\op & 
\end{tikzcd}
\end{equation*}
Composing with $q$ gives contravariant functors
\begin{align*}
&(-)^*\colon \md \cX/\eff \cX\to \md \cX\op/\eff \cX\op \\
& (-)^*\colon \md \cX\op/\eff \cX\op\to \md \cX/\eff \cX
\end{align*}
which we denote by the same symbol. The natural transformations $\Id \to (-)^{**}$ satisfy the triangular identities in $\md \cX$ and $\md \cX\op$, and hence also in $\md \cX/\eff \cX$ and $\md \cX\op/\eff \cX\op$. This implies that the functors $(-)^*$ still form an adjoint pair as functors between $\md \cX/\eff \cX$ and $\md \cX\op/\eff \cX\op$.

Next we want to find conditions on $\cX$ which ensures that $(-)^*$ induces an equivalence
\[
\md \cX/\eff \cX\xrightarrow{\cong}(\md \cX\op/\eff \cX\op)\op.
\] 
Note first that for $F\in \md\cX$ we have an isomorphism
\[
\Ext^2_{\md \cX}(F,G)\cong \Ext^1_{\md\cX}(F',G)
\]
where $F'$ is a syzygy of $F$, i.e. fits in an exact sequence
\[
0\to F'\to \cX(-,X)\to F\to 0
\] 
Using this, we see that the unit and counit given by \eqref{unit of (-)^*} and \eqref{counit of (-)^*} becomes isomorphisms in $\md \cX/\eff \cX$ and $\md \cX\op/\eff \cX\op$ if
\begin{align*}
& \Ext^1_{\md \cX\op}(F,\cX\op)\in \eff \cX \quad \text{for all } F\in \md \cX\op \\
& \Ext^1_{\md\cX}(F,\cX)\in \eff \cX\op \quad \text{for all } F\in \md \cX
\end{align*}
which ensures that $(-)^*\colon \md \cX/\eff \cX\xrightarrow{}\md \cX\op/\eff \cX\op$ is an equivalence. To capture this requirement, we introduce the following axiom:
\begin{enumerate}[label=(A\arabic*)]
\setcounter{enumi}{2}
\item\label{A3} Consider the following diagram
\begin{equation*}
\begin{tikzcd}
X_2 \arrow{rr}{f} \arrow{rd}{l} & & X_1 \arrow{r}{g} & X_0 \\
 & \arrow{ru}{h}X_2' & 
\end{tikzcd}
\end{equation*}
where $f$ is an arbitrary morphism in $\cX$, where $g$ is a weak cokernel of $f$, where $h$ is a weak kernel of $g$, and where $l$ is an induced map satisfying $h\circ l=f$ (which exists since $g\circ f=0$ and $h$ is a weak kernel of $g$). Then for any weak kernel $k\colon X_3'\to X_2'$ of $h$ the map $\begin{bmatrix}l & k\end{bmatrix}\colon X_2\oplus X_3'\to X_2'$ is an epimorphism. 
\end{enumerate}

\begin{Proposition}\label{Equivalent criteria for A3}
Assume $\cX$ is an additive category satsifying \ref{A1}, \ref{A1op}, \ref{A2}, \ref{A2op}. Then the following statements hold:
\begin{enumerate}
\item\label{Equivalent criteria for A3:1} $\cX$ satisfies \ref{A3} if and only if $\Ext^1_{\md \cX\op}(F,\cX)\in \eff \cX$ for all $F\in \md \cX\op$;
\item\label{Equivalent criteria for A3:2} $\cX$ satisfies \ref{A3}$\op$ if and only if $\Ext^1_{\md\cX}(F,\cX)\in \eff \cX\op$ for all $F\in \md \cX$.
\end{enumerate}
\end{Proposition}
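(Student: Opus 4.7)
Part (ii) will follow from part (i) by duality, so I focus on part (i). The plan is to compute, for an arbitrary $F\in \md\cX\op$, the functor $\Ext^{1}_{\md\cX\op}(F,\cX(-,?))$ explicitly as an object of $\md\cX$, and to identify its canonical presentation with the map $[l\ k]\colon X_2\oplus X_3'\to X_2'$ appearing in axiom \ref{A3}. Once this identification is made, the equivalence follows from the characterisation of $\eff\cX$ in Proposition \ref{effaceable functors} \ref{effaceable functors:1}.

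Concretely, given $F\in \md\cX\op$, pick a morphism $f\colon X_2\to X_1$ in $\cX$ so that
\[
\cX(X_1,-)\xrightarrow{-\circ f}\cX(X_2,-)\to F\to 0
\]
is a projective presentation of $F$ in $\md\cX\op$. Using \ref{A1op}, choose a weak cokernel $g\colon X_1\to X_0$ of $f$; this yields the three-term resolution
\[
\cX(X_0,-)\xrightarrow{-\circ g}\cX(X_1,-)\xrightarrow{-\circ f}\cX(X_2,-)\to F\to 0
\]
in $\md\cX\op$. Apply $\Hom_{\md\cX\op}(-,\cX(Y,-))$ and use Yoneda to identify the resulting complex with
\[
\cX(Y,X_2)\xleftarrow{f\circ -}\cX(Y,X_1)\xleftarrow{g\circ -}\cX(Y,X_0),
\]
so that, regarded as a functor of $Y$,
\[
\Ext^{1}_{\md\cX\op}(F,\cX(-,?))\;\cong\;\Ker\bigl(\cX(-,X_1)\xrightarrow{g\circ -}\cX(-,X_0)\bigr)\big/\Ima\bigl(\cX(-,X_2)\xrightarrow{f\circ -}\cX(-,X_1)\bigr).
\]

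Next, invoke \ref{A1} to pick a weak kernel $h\colon X_2'\to X_1$ of $g$, which gives
\[
\Ker(g\circ -)\;=\;\Ima\bigl(\cX(-,X_2')\xrightarrow{h\circ -}\cX(-,X_1)\bigr).
\]
Since $g\circ f=0$ there is $l\colon X_2\to X_2'$ with $h\circ l=f$, precisely as in the statement of \ref{A3}. Then the quotient above may be rewritten by pulling the denominator back along the surjection $\cX(-,X_2')\twoheadrightarrow \Ima(h\circ -)$: its kernel is $\Ker(h\circ -)$, which by a further application of \ref{A1} (take a weak kernel $k\colon X_3'\to X_2'$ of $h$) equals $\Ima(k\circ -)$, and the preimage of $\Ima(f\circ -)=\Ima(h\circ l\circ -)$ equals $\Ima(l\circ -)+\Ker(h\circ -)$. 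Combining these observations gives a natural exact sequence
\[
\cX(-,X_2\oplus X_3')\xrightarrow{[l\ k]\circ -}\cX(-,X_2')\longrightarrow \Ext^{1}_{\md\cX\op}(F,\cX(-,?))\longrightarrow 0
\]
in $\md\cX$.

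With this presentation in hand, the equivalence is immediate. If \ref{A3} holds, then $[l\ k]$ is an epimorphism for every such choice, so the displayed presentation exhibits $\Ext^{1}_{\md\cX\op}(F,\cX)$ as a member of $\eff\cX$ by definition. Conversely, if $\Ext^{1}_{\md\cX\op}(F,\cX)\in \eff\cX$ for all $F$, then applying this to $F=\Coker(-\circ f)$ and using Proposition \ref{effaceable functors} \ref{effaceable functors:1} on the above presentation forces $[l\ k]$ to be an epimorphism, yielding \ref{A3}. The main bookkeeping obstacle is verifying that the quotient identification in the second paragraph is natural in $Y$ (so that it genuinely produces an isomorphism in $\md\cX$), but this is routine because every step is induced by morphisms in $\cX$ precomposed into the representable functors.
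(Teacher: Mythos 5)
Your proposal is correct and follows essentially the same route as the paper: both build, from the resolution $\cX(X_0,-)\to\cX(X_1,-)\to\cX(X_2,-)\to F\to 0$ and the weak kernels $h$ and $k$, a presentation $\cX(-,X_2\oplus X_3')\xrightarrow{[l\ k]\circ -}\cX(-,X_2')\to\Ext^1_{\md\cX\op}(F,\cX)\to 0$, and then conclude via the characterization of $\eff\cX$ from Proposition \ref{effaceable functors} \ref{effaceable functors:1}. The only difference is that the paper identifies the cokernel of $[l\ k]\circ -$ with $\Ext^1$ via the snake lemma applied to the two short exact sequences built from $p$ and $q$, whereas you do the same identification by a direct image/preimage computation; this is a bookkeeping variation, not a different argument (note only the harmless typo in the direction of the arrows in your Yoneda-dualized complex).
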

\begin{proof}
We prove \ref{Equivalent criteria for A3:1}, \ref{Equivalent criteria for A3:2} is proved dually.  Let $f\colon X_2\to X_1$ be arbitrary, and choose $g$, $h$, $k$, $l$ as in \ref{A3}. Let $F$ be the cokernel of $-\circ f\colon \cX(X_1,-)\to \cX(X_2,-)$. Applying $(-)^*$ to the exact sequence
\[
\cX(X_0,-)\xrightarrow{-\circ g}\cX(X_1,-)\xrightarrow{-\circ f}\cX(X_2,-)\to F\to 0
\] 
we get a complex 
\begin{equation}\label{complex for A3}
F^*\to \cX(-,X_2)\xrightarrow{f\circ -}\cX(-,X_1)\xrightarrow{g\circ -}\cX(-,X_0)
\end{equation}
Let $K$ be the kernel of $g\circ -$. Since $g\circ h=0$, it follows that the map $h\circ -\colon \cX(-,X_2')\to \cX(-,X_1)$ factors through $K$ via a morphism $p\colon \cX(-,X_2')\to K$. Since $h$ is a weak kernel of $g$, it follows that any map $\cX(-,X)\to K$ with $X\in \cX$ must factor through $p$. Hence, $p$ is an epimorphism. Similarly, if we let $K'=\Ker (\cX(-,X_2')\xrightarrow{h\circ -}\cX(-,X_1))$, then since $k\colon X_3'\to X_2'$ is a weak kernel of $h$, it follows that $\cX(-,X_3')\xrightarrow{k\circ -}\cX(-,X_2')$ factors through $K'$ via an epimorphism $\cX(-,X'_3)\xrightarrow{q} K'$. Hence, we get a commutative diagram 
\begin{equation*}
\begin{tikzcd}[ampersand replacement =\&]
0\arrow{r}{} \& \cX(-,X_3') \arrow{r}{\begin{bmatrix}0 \\ 1\end{bmatrix}} \arrow{d}{q} \& \cX(-,X_2)\oplus \cX(-,X_3') \arrow{r}{\begin{bmatrix}1 & 0\end{bmatrix}} \arrow{d}{r} \& \cX(-,X_2) \arrow{r}{} \arrow{d}{} \& 0 \\
0\arrow{r} \& K' \arrow{r}{} \& \cX(-,X_2')\arrow{r}{p} \& K \arrow{r}{} \& 0 
\end{tikzcd}
\end{equation*}
with exact rows, where $r=\begin{bmatrix}
l\circ - & k\circ -\end{bmatrix}$. Note that the cokernel of $\cX(-,X_2)\to K$ is $\Ext^1_{\md \cX\op}(F,\cX)$. By the snake lemma, it follows that the cokernel of 
\[
r\colon \cX(-,X_2)\oplus \cX(-,X_3')\to \cX(-,X_2')
\]
is also $\Ext^1_{\md \cX\op}(F,\cX)$, since $q$ is an epimorphism. Hence, $\Ext^1_{\md \cX\op}(F,\cX)\in \eff \cX$ if and only if $r$ is an epimorphism in $\md \cX/\eff \cX$. Since a map in $\cX$ is an epimorphism in $\md \cX/\eff \cX$ if and only if it is an epimorphism in $\cX$, it follows that $r$ is an epimorphism in $\md \cX/\eff \cX$ if and only if $\begin{bmatrix}l & k\end{bmatrix}\colon X_2\oplus X_3'\to X_2'$ is an epimorphism in $\cX$. This proves the claim.
\end{proof}

\begin{Remark}\label{Remark A3 and A3'}
Note that the "only if" direction of the proof of Proposition \ref{Equivalent criteria for A3} uses the following alternative version of \ref{A3}, which therefore must be equivalent to \ref{A3} under the assumption of \ref{A1}, \ref{A1op}, \ref{A2}, \ref{A2op}:
\begin{enumerate}[label=(A\arabic*')]
\setcounter{enumi}{2}
\item\label{A3'} Let $f\colon X_2\to X_1$ be a morphism in $\cX$. Then there exists a diagram 
\begin{equation*}
\begin{tikzcd}
X_2 \arrow{rr}{f} \arrow{rd}{l} & & X_1 \arrow{r}{g} & X_0 \\
X_3'\arrow{r}{k} & \arrow{ru}{h}X_2' & 
\end{tikzcd}
\end{equation*}
where $g$ is a weak cokernel of $f$, where $h$ is a weak kernel of $g$, where $k$ is a weak kernel of $h$, where $l$ is a map satisfying $h\circ l=f$, and where the map $\begin{bmatrix}l & k\end{bmatrix}\colon X_2\oplus X_3'\to X_2'$ is an epimorphism. 
\end{enumerate}
\end{Remark}

We finish by giving a characterization of generating cogenerating functorially finite subcategories in terms of intrinsic axioms.

\begin{Theorem}\label{Embedding Theorem generating cogenerating functorially finite}
Let $\mathsf{P}$ be the axioms  \ref{A1}, \ref{A1op}, \ref{A2}, \ref{A2op}, \ref{A3} and \ref{A3}\op. Then $\mathsf{P}$ axiomatizes generating, cogenerating functorially finite subcategories of abelian categories.
\end{Theorem}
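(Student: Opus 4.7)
The plan is to verify the three conditions of Definition \ref{Axiomatizing subcategories}. For (i), suppose $\cX\subseteq\cA$ is generating, cogenerating and functorially finite. Generatingness gives \ref{B1}; cogeneratingness gives \ref{B2}, since any monomorphism $A\hookrightarrow X$ with $X\in\cX$ must vanish when $\cA(A,X)=0$, forcing $A=0$. Contravariant finiteness gives \ref{B3}, so Lemma \ref{B1,B2,B3 implies A1,A2} yields \ref{A1} and \ref{A2}, and the opposite argument in $\cA\op$ yields \ref{A1op} and \ref{A2op}. For \ref{A3}, interpret the diagram of that axiom inside $\cA$. Cogeneratingness forces the induced factorization $\coker f\to X_0$ of $g$ to be monic (any map from $\coker f$ to an object of $\cX$ factors through it, and $\coker f$ embeds into some such object), so $\ker g=\im f$. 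Generatingness forces the image of any weak kernel in $\cX$ to coincide with the actual kernel in $\cA$, giving $\im h=\ker g=\im f$ and $\im k=\ker h$. Since $h\circ l=f$, one has $h(\im l)=\im f=\im h$; for every $x\in X_2'$ there exists $y\in\im l$ with $h(y)=h(x)$, so $x=y+(x-y)\in\im l+\ker h=\im l+\im k$. Hence $[l\ k]$ is epic in $\cA$ and therefore in $\cX$ (the full inclusion reflects epimorphisms). Axiom \ref{A3}\op\ follows by the dual argument.

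For (ii), assume $\cX$ satisfies all six axioms. Theorem \ref{Embedding Theorem} furnishes a fully faithful $\Phi\colon \cX\to\cA:=\md\cX/\eff\cX$ whose essential image satisfies \ref{B1}, \ref{B2} and \ref{B3}. By Proposition \ref{Equivalent criteria for A3}, axioms \ref{A3} and \ref{A3}\op\ translate respectively into $\Ext^1_{\md\cX\op}(F,\cX)\in\eff\cX$ and $\Ext^1_{\md\cX}(F,\cX)\in\eff\cX\op$, and by the discussion preceding that proposition these vanishings, together with the reduction of $\Ext^2$ to $\Ext^1$ via syzygies, make the unit and counit of the $(-)^*\dashv(-)^*$ adjunction isomorphisms after localization. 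Thus $(-)^*$ induces a duality
\[
\md\cX/\eff\cX \;\simeq\; (\md\cX\op/\eff\cX\op)\op .
\]
Applying Theorem \ref{Embedding Theorem} to $\cX\op$ (which satisfies \ref{A1} and \ref{A2} as additive category because $\cX$ satisfies \ref{A1op} and \ref{A2op}) and transferring the resulting \ref{B1}, \ref{B2}, \ref{B3} across this duality shows that $\cX\subseteq\cA$ is cogenerating and that every $A\in\cA$ fitting in an exact sequence $X_2\to X_1\to A\to 0$ with $X_i\in\cX$ admits a left $\cX$-approximation. Iterating the (co)generating property twice exhibits each $A\in\cA$ both as an object of $\Omega^2_{\cX}(\cA)$ and as such a two-step cokernel, so \ref{B3} and its dual produce right and left $\cX$-approximations respectively; hence $\cX$ is functorially finite in $\cA$.

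Condition (iii) follows at once from Proposition \ref{uniqueness ambient abelian}: for any abelian $\cA'$ with $\cX\subseteq\cA'$ satisfying \ref{B1}, \ref{B2} and \ref{B3}, there is an equivalence $\md\cX/\eff\cX\simeq\cA'$ extending the inclusion, and comparing two such ambient categories yields the required equivalence compatible with $\Phi$ and $\Phi'$. The main obstacle is step (ii): extracting the duality $\md\cX/\eff\cX\simeq(\md\cX\op/\eff\cX\op)\op$ from \ref{A3} and \ref{A3}\op\ via Proposition \ref{Equivalent criteria for A3}, and then correctly transferring the dual of \ref{B3} across this duality to deduce cogeneratingness and covariant finiteness in one stroke.
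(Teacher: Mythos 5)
Your proposal is correct and follows essentially the same route as the paper: both directions rest on the characterization of weak (co)kernels via $\cX$-approximations, on Proposition \ref{Equivalent criteria for A3} together with the sequences \eqref{unit of (-)^*} and \eqref{counit of (-)^*} to obtain the duality $\md\cX/\eff\cX\simeq(\md\cX\op/\eff\cX\op)\op$, and on Proposition \ref{uniqueness ambient abelian} for uniqueness. The only cosmetic difference is that you verify the epimorphism in \ref{A3} by an element chase ($X_2'=\im l+\Ker h$) where the paper compares two short exact sequences ending in $\Ker g$ and $\Ker h$; the content is identical.
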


\begin{proof}
If $\cX$ satisfies \ref{A1}, \ref{A1op}, \ref{A2}, \ref{A2op}, \ref{A3} and \ref{A3}\op, then by Proposition \ref{Equivalent criteria for A3} and the exact sequences \eqref{unit of (-)^*} and \eqref{counit of (-)^*} it follows that $(-)^*$ induces an equivalence
\[
\md \cX/\eff \cX\cong (\md \cX\op/\eff \cX\op)\op
\]
which commutes with the natural inclusions 
\[
\cX\to \md \cX/\eff \cX\quad \text{and} \quad \cX\to (\md \cX\op/\eff \cX\op)\op.
\]
By Proposition \ref{essential image satisfy B1-B3} and its dual we get that $\cX$ is (equivalent to) a generating cogenerating subcategory of $\md \cX/\eff \cX$. Therefore $\Omega^2_{\cX}(\md \cX/\eff \cX)=\md \cX/\eff \cX$, and since $\cX$ satisfies \ref{B3} it follows that $\cX$ is contravariantly finite in $\md \cX/\eff \cX$. Similarly, $\cX$ is also covariantly finite in $\md \cX/\eff \cX$. This shows that $\cX$ is a generating cogenerating functorially finite subcategory. 

Now assume $\cX\subseteq \cA$ is a generating cogenerating functorially finite subcategory of an abelian category $\cA$. By Theorem \ref{Embedding Theorem} and its dual it follows that $\cX$ satisfies \ref{A1}, \ref{A1op}, \ref{A2} and \ref{A2op}. We only need to show that $\cX$ satisfies \ref{A3}, since the proof for \ref{A3}\op{ } is dual. Note first that a morphism $g\colon X_1\to X_0$ in $\cX$ is a weak cokernel of $f\colon X_2\to X_1$ if and only if the induced map $\Coker f\to X_0$ is a left $\cX$-approximation (and therefore also a monomorphism). The dual statement holds for weak kernels. Now assume we are given $l$,$f$,$g$,$h$,$k$ as in \ref{A3}. Since $\Ker g \cong \Ker (X_1\to \Coker f)$, it follows that $f$ factors through $\Ker g$ via an epimorphism $p\colon X_2\to \Ker g$. Also, since $h$ is a weak kernel of $g$ and $k$ is a weak kernel of $h$, the map $h$ factors through $\Ker g$ via an epimorphism $q\colon X_2'\to \Ker g$ and the map $k$ factors through $\Ker h$ via an epimorphism $p'\colon X_3'\to \Ker h$. Hence we get a commutative diagram with exact rows
\begin{equation*}
\begin{tikzcd}[ampersand replacement =\&]
0\arrow{r}{} \& X_3' \arrow{r}{\begin{bmatrix}0 \\ 1\end{bmatrix}} \arrow{d}{p'} \& X_2\oplus X_3' \arrow{r}{\begin{bmatrix}1 & 0\end{bmatrix}} \arrow{d}{\begin{bmatrix}l & k\end{bmatrix}} \& X_2 \arrow{r}{} \arrow{d}{p} \& 0 \\
0\arrow{r} \& \Ker h \arrow{r}{} \& X_2'\arrow{r}{q} \& \Ker g \arrow{r}{} \& 0 
\end{tikzcd}.
\end{equation*}
Since the leftmost and rightmost vertical map are epimorphism, the middle map must be an epimorphism. Therefore, $\cX$ satisfies \ref{A3}.
\end{proof}

\section{Rigid subcategories}

In this section we assume $\cX$ is a generating cogenerating functorially finite subcategory of an abelian category $\cA$. By Theorem \ref{Embedding Theorem generating cogenerating functorially finite} we know that this is equivalent to \ref{A1}, \ref{A1op}, \ref{A2}, \ref{A2op}, \ref{A3}, \ref{A3}\op. Now we want to determine the intrinsic axiom needed to capture the property
\begin{equation}\label{Ext vanishing}
\Ext^i_{\cA}(X,X')=0 \text{ for } 0< i< d \text{ and } X,X'\in \cX.
\end{equation}
We consider the following:
\begin{enumerate}[label=($d$-Rigid)]
\item\label{d-Rigid} For all epimorphism $f_1\colon X_1\to X_0$ in $\cX$ there exists a sequence
\[
X_{d+1}\xrightarrow{f_{d+1}}X_d\xrightarrow{f_d}\cdots \xrightarrow{f_3}X_2\xrightarrow{f_2}X_1\xrightarrow{f_1}X_0
\]
with $f_{i+1}$ a weak kernel of $f_i$ and $f_i$ a weak cokernel of $f_{i+1}$ for all $1\leq i\leq d$.
\end{enumerate}

The following theorem relates these two notions:

\begin{Theorem}\label{d-Rigid and Ext vanishing}
Let $\cX$ be a generating cogenerating functorially finite subcategory of an abelian category $\cA$. Then $\cX$ satisfies \ref{d-Rigid} if and only if
\[
\Ext^i_{\cA}(X,X')=0 
\] 
for all $0< i< d$ and all $ X,X'\in \cX$.
\end{Theorem}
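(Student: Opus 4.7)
The plan is to analyze the iterated short exact sequences $0 \to K_i \to X_i \to K_{i-1} \to 0$ in $\cA$, where $K_0 = X_0$ and $K_i = \Ker f_i$ computed in $\cA$. Since $\cX$ is generating in $\cA$, a right $\cX$-approximation of any object in $\cA$ is automatically an epimorphism, so such SESs produce (and are produced by) the sequence in ($d$-Rigid). The condition ``$f_{i+1}$ is a weak kernel of $f_i$'' just says this SES is exact, while ``$f_i$ is a weak cokernel of $f_{i+1}$'' for $i \ge 2$ translates, via the long exact sequence of $\cA(-,Y)$ applied to $0 \to K_{i-1} \to X_{i-1} \to K_{i-2} \to 0$, into the vanishing of the connecting map $\delta_{i-1} \colon \cA(K_{i-1}, Y) \to \Ext^1_\cA(K_{i-2}, Y)$ for every $Y \in \cX$; equivalently, every map $K_{i-1} \to Y$ extends to $X_{i-1} \to Y$. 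The case $i = 1$ is automatic since $f_1$ is an epimorphism in $\cX$, hence in $\cA$.

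For $(\Rightarrow)$, assume the Ext vanishing. Given an epimorphism $f_1 \colon X_1 \to X_0$ in $\cX$, build $X_2, \ldots, X_{d+1}$ iteratively as right $\cX$-approximations of the successive $K_i$; each $f_{i+1}$ is then a weak kernel of $f_i$ by construction. To verify the weak cokernel conditions, it suffices to check $\Ext^1_\cA(K_{m-1}, Y) = 0$ for $1 \le m \le d - 1$ and $Y \in \cX$, since this forces $\delta_m = 0$. Iterated dimension shifting through the SESs $0 \to K_j \to X_j \to K_{j-1} \to 0$ gives the chain of isomorphisms $\Ext^1(K_{m-1}, Y) \cong \Ext^2(K_{m-2}, Y) \cong \cdots \cong \Ext^{m-1}(X_0, Y)$; each isomorphism is valid because $\Ext^j(X_k, Y) = 0$ for $1 \le j \le m-1 \le d-2$ (using $X_k \in \cX$ and the hypothesis), and the final term vanishes because $m - 1 < d$.

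For $(\Leftarrow)$, assume ($d$-Rigid) and prove $\Ext^n_\cA(X, Y) = 0$ for $X, Y \in \cX$ and $1 \le n < d$ by induction on $n$. Represent $\xi \in \Ext^n_\cA(X, Y)$ by an $n$-extension $0 \to Y \to E_{n-1} \to \cdots \to E_0 \to X \to 0$, choose an epimorphism $X_1 \to E_0$ with $X_1 \in \cX$, let $f_1 \colon X_1 \to X$ be the composite, and apply ($d$-Rigid) to $f_1$ to produce the sequence $X_{d+1} \to \cdots \to X_0$ with its attendant SESs. The pullback of $\xi$ along $f_1$ is an $n$-extension of $X_1$ whose rightmost surjection $E_0 \times_X X_1 \to X_1$ admits a canonical section induced by $X_1 \to E_0$, and such an extension is null in $\Ext^n_\cA(X_1, Y)$ (it is a Yoneda product with a split 1-extension). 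Hence $\xi = \delta_1(\eta_1)$ for some $\eta_1 \in \Ext^{n-1}_\cA(K_1, Y)$. For $k = 2, \ldots, n$ the inductive hypothesis furnishes $\Ext^{n-k+1}_\cA(X_k, Y) = 0$, so the long exact sequence of $0 \to K_k \to X_k \to K_{k-1} \to 0$ lifts $\eta_{k-1} = \delta_k(\eta_k)$ for some $\eta_k \in \Ext^{n-k}_\cA(K_k, Y)$. After $n$ steps $\eta_n \in \cA(K_n, Y)$, and the connecting map $\delta_n \colon \cA(K_n, Y) \to \Ext^1_\cA(K_{n-1}, Y)$ is zero by the ($d$-Rigid) condition ``$f_{n+1}$ is a weak cokernel of $f_{n+2}$'' (valid because $n+1 \le d$). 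Back-substituting gives $\eta_{n-1} = 0$, then $\eta_{n-2} = \delta_{n-1}(0) = 0$, and so on until $\xi = 0$.

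The main obstacle is the bookkeeping needed to translate ($d$-Rigid) into the precise vanishing statement about the connecting maps $\delta_m$ and to line up the shift in extension degree with the index in the resolution. The inductive hypothesis cleanly eliminates the obstructions to lifting at every step $k \ge 2$, so the only place an adapted choice is required is the initial $X_1$; from there the vanishing propagates through the resolution, and one uses ($d$-Rigid) only at the last step to force $\delta_n = 0$.
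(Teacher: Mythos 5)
Your proof is correct and takes essentially the same route as the paper: the forward direction builds the resolution from right $\cX$-approximations and checks the weak cokernel conditions by dimension shifting, while the backward direction reduces a given $n$-extension to one whose right end lies in $\cX$, lifts the class along the ($d$-Rigid) resolution using the inductive hypothesis, and kills the last obstruction with the weak cokernel condition at position $n+1$ (which is exactly the role of Lemmas \ref{1-Rigid} and \ref{1 rigid not just in X} in the paper). The only blemish is an off-by-one in your dimension-shifting chain in the forward direction: the chain $\Ext^1_{\cA}(K_{m-1},Y)\cong\Ext^2_{\cA}(K_{m-2},Y)\cong\cdots$ should terminate at $\Ext^{m}_{\cA}(X_0,Y)$ rather than $\Ext^{m-1}_{\cA}(X_0,Y)$, which still vanishes since $1\leq m\leq d-1$.
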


\begin{Remark}
By Lemma \ref{Weak kernels and cokernels} it follows that \ref{d-Rigid} with $d=1$ is equivalent to \ref{A2} (under the assumption that weak kernels and cokernels exist). Hence, it holds automatically for a generating cogenerating functorially finite subcategory. This is reflected by the fact that condition \eqref{Ext vanishing} is empty for $d=1$.
\end{Remark}

\begin{proof}[Proof of "if" part of Theorem \ref{d-Rigid and Ext vanishing}]
Assume $f_1\colon X_1\to X_0$ is an epimorphism in $\cX$. Then $f_1$ must also be an epimorphism in $\cA$. Choose a right $\cX$-approximation $X_2\to \Ker f_1$, which must be an epimorphism since $\cX$ is generating. Let $f_2$ denote the composite $X_2\to \Ker f\to X_1$. We continue this construction iteratively for $1\leq i\leq d$, i.e. we choose a right $\cX$-approximation $X_{i+1}\to \Ker f_i$ and we let $f_{i+1}$ denote the composite $X_{i+1}\to \Ker f_i\to X_i$. Then we get an exact sequence
\[
X_{d+1}\xrightarrow{f_{d+1}}X_d\xrightarrow{f_d}\cdots \xrightarrow{f_3}X_2\xrightarrow{f_2}X_1\xrightarrow{f_1}X_0\to 0
\]
in $\cA$ where $f_{i+1}$ is a weak kernel of $f_i$ for all $1\leq i\leq d$. Applying $\cA(-,X')$ with $X'\in \cX$ and using that $\Ext^j_{\cA}(X_i,X')=0$ for all $0<j<d$  and $0\leq i\leq d+1$, we get an exact sequence
\[
0\to \cA(X_0,X')\xrightarrow{-\circ f_1}\cA(X_1,X')\xrightarrow{-\circ f_2}\cdots \xrightarrow{-\circ f_d}\cA(X_d,X')\xrightarrow{-\circ f_{d+1}}\cA(X_{d+1},X')
\] 
In particular, since the sequences $\cA(X_{i-1},X')\xrightarrow{-\circ f_i}\cA(X_i,X')\xrightarrow{-\circ f_{i+1}}\cA(X_{i+1},X')$ are exact for all $1\leq i\leq d$ and all $X'\in \cX$, it follows that $f_{i+1}$ is a weak cokernel of $f_i$ for all $1\leq i\leq d$. This proves the claim.
\end{proof}

The goal in the remaining part of the section is prove the converse.

\begin{Lemma}\label{1-Rigid}
Let $\cX$ be a generating cogenerating functorially finite subcategory of an abelian category $\cA$. Assume $\cX$ satisfies \ref{d-Rigid} for some $d>1$. Then $\Ext^1_{\cA}(X,X')=0$ for all $X,X'\in \cX$. 
\end{Lemma}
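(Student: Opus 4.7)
The plan is to show that every short exact sequence $\xi\colon 0\to X'\xrightarrow{a} A\xrightarrow{b} X\to 0$ in $\cA$ with $X,X'\in \cX$ splits. Since $\cX$ is generating, I pick an epimorphism $g\colon X_1\to A$ with $X_1\in \cX$; then $f_1:=b\circ g\colon X_1\to X$ is an epimorphism in $\cA$, hence also in $\cX$ by fullness. Applying \ref{d-Rigid} (using $d\geq 2$) to $f_1$, I obtain morphisms $f_2\colon X_2\to X_1$ and $f_3\colon X_3\to X_2$ in $\cX$ such that $f_2$ is a weak kernel of $f_1$, $f_3$ is a weak kernel of $f_2$, and, crucially, $f_2$ is a weak cokernel of $f_3$.

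By Lemma \ref{B1,B2,B3 implies A1,A2}, weak kernels in $\cX$ become exact sequences under the inclusion $\cX\hookrightarrow \cA$, so writing $K:=\Ker f_1$ in $\cA$, the map $f_2$ factors as $i\circ \pi$, where $i\colon K\hookrightarrow X_1$ is the inclusion and $\pi\colon X_2\to K$ is an epimorphism in $\cA$. Applying $\cA(-,X')$ to the short exact sequence $0\to K\to X_1\xrightarrow{f_1}X\to 0$ gives the long exact sequence
\[
\cA(X_1,X')\to \cA(K,X')\xrightarrow{\delta} \Ext^1_{\cA}(X,X')\xrightarrow{f_1^*}\Ext^1_{\cA}(X_1,X').
\]
The class $[\xi]$ satisfies $f_1^*[\xi]=0$: indeed, the identity $b\circ g=f_1$ supplies, by the universal property of the pullback, a section $(1_{X_1},g)\colon X_1\to X_1\times_X A$ of the pullback extension. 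Thus by exactness $[\xi]=\delta(h)$ for some $h\colon K\to X'$, and it suffices to show that $h$ extends across $i$ to a morphism $X_1\to X'$.

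This last step is the heart of the argument and the place where the weak cokernel half of \ref{d-Rigid} is used. Consider the composite $h\circ \pi\colon X_2\to X'$. Since $f_2\circ f_3=0$ (as $f_3$ is a weak kernel of $f_2$) and $i$ is monic, I deduce $\pi\circ f_3=0$, hence $(h\circ \pi)\circ f_3=0$. Because $f_2$ is a weak cokernel of $f_3$ in $\cX$ and $X'\in \cX$, there exists $\hat h\colon X_1\to X'$ with $h\circ \pi=\hat h\circ f_2=\hat h\circ i\circ \pi$; cancelling the epimorphism $\pi$ gives $h=\hat h\circ i$, so $[\xi]=\delta(h)=0$ by exactness. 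The only delicate point to track is that the weak cokernel condition is formulated inside $\cX$ whereas $h\circ \pi$ is produced inside $\cA$, but since $\cX$ is a full subcategory of $\cA$ and both endpoints of the morphism in question lie in $\cX$, the factorization can be taken inside $\cX$ and reinterpreted in $\cA$ without difficulty.
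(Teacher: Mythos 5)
Your argument is correct, and it runs on the same engine as the paper's proof: cover $A$ by an object $X_1\in\cX$, apply ($d$-Rigid) to the resulting epimorphism $f_1\colon X_1\to X$, and exploit that $f_2$ is simultaneously a weak kernel of $f_1$ (so its image in $\cA$ is all of $\Ker f_1$, making your $\pi$ an epimorphism) and a weak cokernel of $f_3$ (so every morphism $\Ker f_1\to X'$ with $X'\in\cX$ extends along the inclusion $\Ker f_1\to X_1$, i.e.\ that inclusion is a left $\cX$-approximation). Where you differ is in the setup and the endgame. The paper takes the cover $X_1\to A$ to be a right $\cX$-approximation, factors the monomorphism $X'\to A$ through it as a monomorphism $i\colon X'\to X_1$, identifies $\Ker(f_1)\cong X'\oplus\Ker(X_1\to A)$, and then uses the left-approximation property to produce an explicit retraction $X_1\to X'$ of $i$, which descends to a retraction $A\to X'$; no $\Ext$-formalism is invoked. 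You instead take an arbitrary generating epimorphism $X_1\to A$ and conclude via the long exact sequence of $\Ext^1_{\cA}(-,X')$: the induced section of the pullback shows $f_1^*[\xi]=0$, and the surjectivity of $\cA(X_1,X')\to\cA(\Ker f_1,X')$ shows the connecting map vanishes. Your route is slightly more economical in its hypotheses on the cover and avoids the direct-sum decomposition of the kernel, at the price of relying on the identification of Yoneda $\Ext^1$ with the connecting homomorphism; the paper's route is more elementary and self-contained. The delicate points you flag (that a weak kernel sequence becomes exact in $\cA$, that the factorization through the weak cokernel can be read in $\cX$ and reinterpreted in $\cA$ by fullness, and that $\pi$ may be cancelled) all check out.
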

 
\begin{proof}
Let $0\to X'\xrightarrow{g}A\xrightarrow{f}X\to 0$ be an exact sequence in $\cA$ with $X,X'\in \cX$. Choose a right $\cX$-approximation $p\colon X_1\to A$. Then $g$ factors through $p$ via a monomorphism $i\colon X'\to X_1$. We therefore get a commutative diagram
\begin{equation*}
\begin{tikzcd}
0\arrow{r}{} & X' \arrow{r}{i} \arrow{d}{1} & X_1 \arrow{r}{} \arrow{d}{p} & \Coker i \arrow{r}{} \arrow{d}{} & 0\\
0\arrow{r} & X' \arrow{r}{g} & A \arrow{r}{f} & X \arrow{r}{} & 0 
\end{tikzcd}.
\end{equation*}
with exact rows. This gives an exact sequence
\[
0\to X'\oplus \Ker p\xrightarrow{\begin{bmatrix}i & j\end{bmatrix}} X_1\xrightarrow{f\circ p} X\to 0
\]
where $j\colon \Ker p\to X_1$ is the canonical monomorphism. By \ref{d-Rigid}, there exists a sequence
\[
X_3\xrightarrow{l}X_2\xrightarrow{k}X_1\xrightarrow{f\circ p}X\to 0
\] 
where $l$ is a weak kernel of $k$ and $k$ is a weak kernel of $f\circ p$, and where $k$ is weak cokernel of $l$ and $f\circ p$ is a weak cokernel of $k$, and where $X_3,X_2\in \cX$. Then 
\[
\Coker l\cong\im k\cong\Ker (f\circ p)=X'\oplus \Ker p.
\]
Since $k$ is a weak cokernel of $l$ it follows that the induced map
\[
\Coker l\cong X'\oplus \Ker p \xrightarrow{\begin{bmatrix}i & j\end{bmatrix}} X_1
\] 
is a left $\cX$-approximation. Hence $X'\oplus \Ker p \xrightarrow{\begin{bmatrix}1 & 0\end{bmatrix}}X'$ factors through $X_1$. This means that there exists a map $s\colon X_1\to X'$ such that $s\circ i=1_{X'}$ and $s\circ j=0$. Since $A\cong \Coker j$, we get an induced map $t\colon A\to X'$ satisfying $t\circ g=1_{X'}$. Therefore the sequence $0\to X'\xrightarrow{g}A\xrightarrow{f}X\to 0$ is split. Since the elements in $\Ext^1_{\cA}(X,X')$ can be described in terms of Yoneda extensions, this proves the claim.
\end{proof}

\begin{Lemma}\label{1 rigid not just in X}
Let $\cX$ be a generating cogenerating functorially finite subcategory of an abelian category $\cA$. Assume $\cX$ satisfies \ref{d-Rigid} for some $d>1$, and let $f\colon X_1\to X_0$ be a weak cokernel in $\cX$. Then $\Ext^1_{\cA}(\Coker f,X)=0$ for all $X\in \cX$. 
\end{Lemma}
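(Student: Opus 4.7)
My plan is to take an arbitrary extension $0\to X\to E\xrightarrow{p} C\to 0$ with $C=\Coker f$, pull it back along the canonical projection $\pi\colon X_0\to C$, use the already-established vanishing $\Ext^1_{\cA}(X_0,X)=0$ from Lemma \ref{1-Rigid} to produce a lifting $g\colon X_0\to E$ of $\pi$, and then modify $g$ using the weak cokernel property of $f$ to descend it to a section $C\to E$.

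In more detail, first note that by \ref{A1} the map $f$ admits a weak kernel $u\colon X_2\to X_1$, and by Lemma \ref{Weak kernels and cokernels} \ref{Weak kernels and cokernels:2} the morphism $f$ is a weak cokernel of $u$. Given an extension $\varepsilon\colon 0\to X\xrightarrow{i} E\xrightarrow{p} C\to 0$, the pullback along $\pi$ yields an extension of $X_0$ by $X$, which splits by Lemma \ref{1-Rigid} since $d>1$. Composing the section $X_0\to E\times_C X_0$ with the projection to $E$ gives a map $g\colon X_0\to E$ satisfying $p\circ g=\pi$. Since $\pi\circ f=0$, we have $p\circ g\circ f=0$, so $g\circ f$ factors through $i$, yielding $h\colon X_1\to X$ with $i\circ h=g\circ f$.

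The key observation is now the following: since $u$ is a weak kernel of $f$, $i\circ h\circ u=g\circ f\circ u=0$, and because $i$ is a monomorphism we get $h\circ u=0$. As $X\in\cX$ and $f$ is a weak cokernel of $u$ in $\cX$, the map $h$ factors as $h=h'\circ f$ for some $h'\colon X_0\to X$. Set $g':=g-i\circ h'$. Then $g'\circ f=i\circ h-i\circ h'\circ f=0$, so $g'$ descends along $\pi$ to a morphism $s\colon C\to E$ with $s\circ \pi=g'$. A quick check gives $p\circ s\circ \pi=p\circ g-p\circ i\circ h'=\pi-0=\pi$, and since $\pi$ is an epimorphism $p\circ s=1_C$, so $s$ splits $\varepsilon$.

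The step I expect to be the main (or only) real point is the third paragraph: getting from the raw lifting $g\colon X_0\to E$ of $\pi$ to a genuine section of $p$. This is exactly where the weak cokernel hypothesis on $f$ is used, via the factorisation $h=h'\circ f$, and it is also where the assumption $X\in\cX$ is essential (so that the defining exactness property of $f$ being a weak cokernel of $u$ applies to the target $X$). Everything else is formal diagram chasing together with the already-proved input that $\Ext^1_{\cA}(X_0,X)=0$.
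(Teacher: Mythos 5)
Your argument is correct and relies on exactly the same two inputs as the paper --- the vanishing $\Ext^1_{\cA}(X_0,X)=0$ from Lemma \ref{1-Rigid} and the factorization property of $f$ being a weak cokernel of its weak kernel --- so it is essentially the paper's proof unwound into an explicit splitting of a Yoneda extension (your correction $g'=g-i\circ h'$ is the element-level form of the surjectivity of $\cA(X_0,X)\to\cA(\im f,X)$). The paper reaches the conclusion more quickly by applying $\cA(-,X)$ to $0\to\im f\to X_0\to\Coker f\to 0$ and noting that $\im f\to X_0$ is a left $\cX$-approximation, but there is no substantive difference in method.
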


\begin{proof}
Applying $\cA(-,X)$ for $X\in \cX$ to the exact sequence $0\to \im f\to X_0\to \Coker f\to 0$ gives an exact sequence
\[
0\to \cA(\Coker f,X)\to \cA(X_0,X)\to \cA(\im f, X)\to \Ext^1_{\cA}(\Coker f,X) \to \Ext^1_{\cA}(X_0,X)
\]
Now $\Ext^1_{\cA}(X_0,X)=0$ by Lemma \ref{1-Rigid}. Also, since $f$ is a weak cokernel, the map $\im f \to X_0$ is a left $\cX$-approximation. Therefore $\cA(X_0,X)\to \cA(\im f,X)$ is surjective. By considering the exact sequence above it follows that $\Ext^1_{\cA}(\Coker f,X)=0$. This proves the claim.
\end{proof}

\begin{proof}[Proof of "only if" part of Theorem \ref{d-Rigid and Ext vanishing}]
We prove that $\Ext^i_{\cA}(X',X)=0$ for all $X,X'\in \cX$ and $0<i<d$ by induction on $i$. For $i=1$ this follows from Lemma \ref{1-Rigid}. Assume $\Ext^i_{\cA}(X',X)=0$ for all $X,X'\in \cX$ and all $0<i\leq j$ with $j<d-1$. We prove that $\Ext^{j+1}_{\cA}(X',X)=0$ for all $X,X'\in \cX$. Let
\[
0\to X\xrightarrow{f_{j+2}}A_{j+1}\xrightarrow{f_{j+1}}\cdots \xrightarrow{f_2}A_1\xrightarrow{f_1}X'\to 0
\]
be an exact sequence with $X,X'\in \cX$. Choose an epimorphism $g\colon X_1\to A_1$ with $X_1\in \cX$, and let $g_1=f_1\circ g$. Next take the pullback square
\begin{equation*}
\begin{tikzcd}
 A_2' \arrow{r}{} \arrow{d}{} & \Ker g_1 \arrow{d}{} \\
 A_2 \arrow{r}{} & \Ker f_1  
\end{tikzcd}.
\end{equation*}
and let $g_2$ be the composite $A_2'\to \Ker g_1\to X_1$. Now constrict iteratively $A_k'$ and $g_k\colon A_k'\to A_{k-1}'$ for $k\leq j+1$ such that
\begin{equation*}
\begin{tikzcd}
 A_k' \arrow{r}{} \arrow{d}{} & \Ker g_{k-1} \arrow{d}{} \\
 A_k \arrow{r}{} & \Ker f_{k-1} 
\end{tikzcd}.
\end{equation*} 
is a pullback square and $g_k$ is the composite $A_k'\to \Ker g_{k-1}\to A_{k-1}'$ (note that $\Ker g_k\cong \Ker f_k$ for $k\geq 2$ and $A_k'\cong A_k$ for $k\geq 3$). Then we get a commutative diagram with exact rows
\begin{equation*}
\begin{tikzcd}
0\arrow{r}{} & X \arrow{r}{g_{j+2}} \arrow{d}{1} & A'_{j+1} \arrow{r}{g_{j+1}} \arrow{d}{} & \cdots \arrow{r}{g_3} & A_2' \arrow{r}{g_2} \arrow{d}{} & X_1 \arrow{r}{g_1} \arrow{d}{} & X'\arrow{r}{} \arrow{d}{1} & 0\\
0\arrow{r}{} & X \arrow{r}{f_{j+2}} & A_{j+1} \arrow{r}{f_{j+1}} & \cdots \arrow{r}{f_3} & A_2 \arrow{r}{f_2} & A_1 \arrow{r}{f_1} & X'\arrow{r}{}  & 0 
\end{tikzcd}.
\end{equation*}
Hence, both exact sequences represents the same element in the Yoneda Ext-group \\ $\Ext^{j+1}_{\cA}(X',X)$. Therefore, it is sufficient to show that the upper exact sequence is $0$ as an element in  $\Ext^{j+1}_{\cA}(X',X)$. For this, it suffices to show that $\Ext^j_{\cA}(\Ker g_1,X)=0$. Now by axiom \ref{d-Rigid} there exists an exact sequence
\[
X_{j+3}\xrightarrow{h_{j+3}}\cdots \xrightarrow{h_4}X_3\xrightarrow{h_3}X_2\xrightarrow{h_2}X_1\xrightarrow{h_1}X'\to 0
\]
where $g_1=h_1$, and where $h_{i+1}$ is a weak kernel of $h_i$ and $h_i$ is a weak cokernel of $h_{i+1}$ for $1\leq i\leq j+2$, and where $X_i\in \cX$ for $1\leq i\leq j+3$. Now consider the exact sequence
\[
0\to \Ker h_{i+1}\to X_{i+1}\to \Ker h_i\to 0
\]
where $1\leq i \leq j-1$. Applying $\cA(-,X)$, we get exact sequences
\begin{multline*}
0\to \cA(\Ker h_i,X)\to \cA(X_{i+1},X)\to \cA(\Ker h_{i+1},X)\to \cdots \to \Ext^{j-i}_{\cA}(X_{i+1},X) \\
 \to \Ext^{j-i}_{\cA}(\Ker h_{i+1},X)\to \Ext^{1+j-i}_{\cA}(\Ker h_i,X)\to \Ext^{1+j-i}_{\cA}(X_{i+1},X)\to \cdots
\end{multline*}
Since $\Ext^{j-i}_{\cA}(X_{i+1},X)=0=\Ext^{1+j-i}_{\cA}(X_{i+1},X)$ for $1\leq i\leq j-1$ by the induction hypothesis, we get that 
\[
\Ext^{1+j-i}_{\cA}(\Ker h_i,X)\cong \Ext^{j-i}_{\cA}(\Ker h_{i+1},X)
\]
Hence
\[
\Ext^j_{\cA}(\Ker h_1,X)\cong \Ext^{j-1}_{\cA}(\Ker h_2,X)\cong \cdots \cong \Ext^1_{\cA}(\Ker h_{j},X)
\]
Since $\Ker h_j\cong \Coker h_{j+2}$ and $h_{j+2}$ is a weak cokernel, it follows that 
\[
\Ext^1_{\cA}(\Coker h_{j+2},X)=0
\] by Lemma \ref{1 rigid not just in X}.
Hence $\Ext^j_{\cA}(\Ker h_1,X)=0$, which proves the claim.
\end{proof}

\section{d-abelian categories are d-cluster tilting}

In this section we show that any $d$-abelian category is equivalent to a $d$-cluster tilting subcategory of an abelian category. More precisely, we show that being a $d$-abelian is equivalent to having $d$-kernels and $d$-cokernels and satisfying axioms \ref{A1}, \ref{A1op}, \ref{A2}, \ref{A2op}, \ref{A3}, \ref{A3}\op, \ref{d-Rigid}, and we show that such categories axiomatizes $d$-cluster tilting subcategories.  

We first recall the definition of a $d$-cluster tilting subcategory:

\begin{Definition}\label{Definition cluster tilting subcategory}
Let $\cX$ be a full subcategory of an abelian category $\cA$, and let $d>0$ be a positive integer. We say that $\cX$ is $d$\emphbf{-cluster tilting} in $\cA$ if the following hold:
\begin{enumerate}
\item\label{Definition cluster tilting subcategory:1} $\cX$ is a generating cogenerating functorially finite subcategory of $\cA$;
\item\label{Definition cluster tilting subcategory:2} We have
\begin{align*}
\cX & = \{A\in \cA \mid \Ext^i_{\cA}(A,X)=0 \text{ for }1\leq i\leq d-1 \text{ and }X\in \cX\} \\ 
& = \{A\in \cA \mid \Ext^i_{\cA}(X,A)=0 \text{ for }1\leq i\leq d-1 \text{ and }X\in \cX\};
\end{align*}
\end{enumerate}
\end{Definition}

We need the following result on $d$-cluster tilting subcategories

\begin{Lemma}\label{result on d-cluster tilting}
Let $\cX$ be a generating cogenerating functorially finite subcategory of an abelian category $\cA$. Assume $\Ext^i_{\cA}(X,X')=0$ for $1\leq i\leq d-1$ and $X,X'\in \cX$. The following are equivalent:
\begin{enumerate}
\item\label{result on d-cluster tilting:1} $\cX$ is $d$-cluster tilting in $\cA$;
\item\label{result on d-cluster tilting:2} $\cX$ is closed under direct summands, and for any $A\in \cA$ there exists exact sequences
\[
0\to A\to X_{-1}\to \cdots \to X_{-d}\to 0 \quad \text{and} \quad 0\to X_d'\to \cdots \to X'_1\to A\to 0
\]
where $X_i,X_i'\in \cX$ for $1\leq i\leq d$.
\end{enumerate}
\end{Lemma}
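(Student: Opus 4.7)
I would prove the two implications separately; the main work in each direction is a dimension-shift argument.

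For \ref{result on d-cluster tilting:1} $\Rightarrow$ \ref{result on d-cluster tilting:2}: Closure of $\cX$ under direct summands is immediate from the $\Ext$-vanishing description of $\cX$. To construct the resolution of a given $A\in\cA$, I build it by iterated right $\cX$-approximations. Since $\cX$ is generating and contravariantly finite, take a right $\cX$-approximation $X_1\to A$, which is automatically an epimorphism because $\cX$ generates $\cA$; set $C_1=\ker(X_1\to A)$, choose a right $\cX$-approximation $X_2\to C_1$ (again epi by generation), and continue. After $d-1$ steps this yields an exact sequence
\[
0\to C_{d-1}\to X_{d-1}\to\cdots\to X_1\to A\to 0
\]
with all $X_i\in\cX$, and it remains to check $C_{d-1}\in\cX$. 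By the $\Ext$-characterization this reduces to $\Ext^i_{\cA}(X,C_{d-1})=0$ for $X\in\cX$ and $1\leq i\leq d-1$, which I would verify by dimension shifting along the short exact sequences $0\to C_j\to X_j\to C_{j-1}\to 0$: the hypothesis $\Ext^i_{\cA}(X,X_j)=0$ for $1\leq i\leq d-1$ yields isomorphisms $\Ext^k_{\cA}(X,C_j)\cong\Ext^{k-1}_{\cA}(X,C_{j-1})$ for $2\leq k\leq d-1$. Iterating reduces $\Ext^i_{\cA}(X,C_{d-1})$ to $\Ext^1_{\cA}(X,C_{d-i})$, which vanishes because $X_{d-i+1}\to C_{d-i}$ is a right $\cX$-approximation and therefore $\Hom_{\cA}(X,X_{d-i+1})\to\Hom_{\cA}(X,C_{d-i})$ is surjective. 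The coresolution of $A$ is built dually, using that $\cX$ is cogenerating and covariantly finite.

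For \ref{result on d-cluster tilting:2} $\Rightarrow$ \ref{result on d-cluster tilting:1}: one inclusion in each of the two $\Ext$-vanishing descriptions of $\cX$ is the standing hypothesis $\Ext^i_{\cA}(X,X')=0$. Conversely, suppose $A\in\cA$ satisfies $\Ext^i_{\cA}(A,X)=0$ for all $X\in\cX$ and $1\leq i\leq d-1$. Take the resolution $0\to X_d\to X_{d-1}\to\cdots\to X_1\to A\to 0$ from \ref{result on d-cluster tilting:2}, and set $K_j=\ker(X_j\to X_{j-1})$ with $X_0=A$, so in particular $K_{d-1}\cong X_d\in\cX$. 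Applying $\Hom_{\cA}(A,-)$ to the short exact sequences $0\to K_{j+1}\to X_{j+1}\to K_j\to 0$ and using $\Ext^i_{\cA}(A,X_{j+1})=0$ for $1\leq i\leq d-1$, I get the chain of isomorphisms
\[
\Ext^1_{\cA}(A,K_1)\cong \Ext^2_{\cA}(A,K_2)\cong\cdots\cong\Ext^{d-1}_{\cA}(A,K_{d-1})=\Ext^{d-1}_{\cA}(A,X_d)=0.
\]
Hence the short exact sequence $0\to K_1\to X_1\to A\to 0$ splits, so $A$ is a direct summand of $X_1\in\cX$; since $\cX$ is closed under direct summands, $A\in\cX$. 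The second $\Ext$-description is handled dually, using the coresolution to show that $\Ext^1_{\cA}(C_1,A)=0$ where $C_1$ is the cokernel of $A\to X_{-1}$.

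The main obstacle is keeping the bookkeeping for the dimension shifts straight, in particular making sure the $\Ext$-degrees remain in the range $[1,d-1]$ where the vanishing against the $X_j$ is guaranteed, and identifying the base case correctly: the right $\cX$-approximation property (rather than mere generation) is what knocks out the surviving $\Ext^1$ in the first implication, whereas in the converse the fact that the resolution has length exactly $d$ places the terminal $\Ext^{d-1}_{\cA}(A,X_d)$ precisely at the allowed upper end of the vanishing range.
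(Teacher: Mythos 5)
Your argument is correct in substance, but be aware that the paper does not prove this lemma directly: its entire proof is a citation of Proposition 2.2.2 in Iyama's paper \cite{Iya07a}, so your dimension-shifting argument is in effect a self-contained reconstruction of that cited result rather than a parallel to anything in this paper. Both directions of your proof are sound: in (i) $\Rightarrow$ (ii) the iterated right-approximation construction together with the $\Ext$-characterization of $\cX$ correctly reduces membership of $C_{d-1}$ in $\cX$ to the vanishing of $\Ext^1_{\cA}(X,C_j)$, and in (ii) $\Rightarrow$ (i) the chain $\Ext^1_{\cA}(A,K_1)\cong\cdots\cong\Ext^{d-1}_{\cA}(A,X_d)=0$ forcing $0\to K_1\to X_1\to A\to 0$ to split is exactly the right mechanism, with all $\Ext$-degrees staying in the permitted range $[1,d-1]$. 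The only blemish is an off-by-one in your base case for the first implication: to kill $\Ext^1_{\cA}(X,C_{d-i})$ you should use the short exact sequence $0\to C_{d-i}\to X_{d-i}\to C_{d-i-1}\to 0$, the surjectivity of $\Hom_{\cA}(X,X_{d-i})\to\Hom_{\cA}(X,C_{d-i-1})$ coming from $X_{d-i}\to C_{d-i-1}$ being a right $\cX$-approximation, and $\Ext^1_{\cA}(X,X_{d-i})=0$; the sequence you invoke, which ends in $C_{d-i}$, instead proves $\Ext^1_{\cA}(X,C_{d-i+1})=0$. Since the same mechanism gives $\Ext^1_{\cA}(X,C_j)=0$ for every $j\geq 1$, this is a harmless bookkeeping slip, not a gap. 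What your approach buys over the paper's is a proof that visibly uses only the stated hypotheses (generating, cogenerating, functorial finiteness, $d$-rigidity, and closure under summands) and so makes the lemma independent of the external reference.
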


\begin{proof}
This follows from \cite[Proposition 2.2.2]{Iya07a}.
\end{proof}

Let $\cX$ be an additive category, and let $f\colon X_1\to X_0$ be a morphism in $\cX$. Following \cite{Jas16}, we say that a sequence
\[
X_{d+1}\to X_d \to \cdots \to X_1
\]
in $\cX$ is a $d$\emphbf{-kernel} of $f$ if the sequence of abelian groups
\[
0\to \cX(X,X_{d+1})\to \cX(X,X_d)\to \cdots \to \cX(X,X_1)\xrightarrow{f\circ -} \cX(X,X_0)
\]
is exact for all $X\in \cX$. Dually, the sequence
\[
X_0\to X_{-1}\to \cdots \to X_{-d}
\]
is a $d$\emphbf{-cokernel} of $f$ if the sequence of abelian groups
\[
0\to \cX(X_{-d},X)\xrightarrow{} \cX(X_{-d+1},X)\to \cdots \to \cX(X_{0},X)\xrightarrow{-\circ f} \cX(X_1,X)
\]
is exact for all $X\in \cX$. 

\begin{Theorem}\label{axioms implying d-cluster tilting}
Let $\cX$ be an idempotent complete additive category satisfying \ref{A1}, \ref{A1op}, \ref{A2}, \ref{A2op}, \ref{A3}, \ref{A3}\op, and \ref{d-Rigid}. Assume furthermore that every morphism in $\cX$ has a $d$-kernel and a $d$-cokernel. Then $\cX$ is equivalent to a $d$-cluster tilting subcategory of an abelian category.
\end{Theorem}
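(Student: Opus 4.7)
The plan is to reduce Theorem~\ref{axioms implying d-cluster tilting} to Lemma~\ref{result on d-cluster tilting} by combining the already-established embedding and Ext-vanishing results, and then to build the two length-$d$ resolutions that Lemma~\ref{result on d-cluster tilting} requires by feeding the $d$-kernel and $d$-cokernel hypotheses through the embedding. First, Theorem~\ref{Embedding Theorem generating cogenerating functorially finite} applied to the axioms \ref{A1}, \ref{A1op}, \ref{A2}, \ref{A2op}, \ref{A3}, \ref{A3}\op\ realises $\cX$ as a generating cogenerating functorially finite subcategory of the abelian category $\cA := \md\cX/\eff\cX$, with which I identify $\cX$. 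Theorem~\ref{d-Rigid and Ext vanishing} upgrades \ref{d-Rigid} to $\Ext^i_{\cA}(X,X')=0$ for all $X,X'\in\cX$ and $0<i<d$, while axiom (A0) makes $\cX$ closed under direct summands in $\cA$. By Lemma~\ref{result on d-cluster tilting}, it only remains to prove that every $A\in\cA$ admits exact sequences $0\to A\to Y_0\to Y_{-1}\to\cdots\to Y_{1-d}\to 0$ and $0\to Y'_{d+1}\to Y'_d\to\cdots\to Y'_2\to A\to 0$ with all terms in $\cX$.

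To produce the right resolution I first manufacture a morphism $f\colon X_2\to X_1$ in $\cX$ whose cokernel in $\cA$ is $A$: since $\cX$ is generating, take an epimorphism $X_1\to A$ in $\cA$ with $X_1\in\cX$, then an epimorphism $X_2\to\Ker(X_1\to A)$ with $X_2\in\cX$, and let $f$ be the composite. Applying the $d$-cokernel hypothesis to $f$ yields a sequence $X_1\to Y_0\to Y_{-1}\to\cdots\to Y_{1-d}$ in $\cX$, and I claim it extends to an exact sequence $X_2\xrightarrow{f}X_1\to Y_0\to\cdots\to Y_{1-d}\to 0$ in $\cA$. Indeed, each consecutive pair is a weak-cokernel pair, and since $\cX$ is cogenerating the factorisation of each weak cokernel through its honest cokernel is a left $\cX$-approximation and therefore a monomorphism in $\cA$; this forces $\Ker(\text{next map}) = \Ima(\text{previous map})$ at $X_1$ and at every intermediate $Y_{-i}$. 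The last map $Y_{2-d}\to Y_{1-d}$ is an epimorphism in $\cX$ because $\cX(Y_{1-d},-)\to\cX(Y_{2-d},-)$ is injective, and hence an epimorphism in $\cA$ by Lemma~\ref{B1,B2,B3 implies A1,A2}. Factoring $X_1\to Y_0$ through $A=\Coker f$ then yields a monomorphism $A\hookrightarrow Y_0$, and splicing gives the desired $0\to A\to Y_0\to\cdots\to Y_{1-d}\to 0$. The left resolution is produced by the dual construction: using that $\cX$ is cogenerating, build a morphism $g\colon X_0\to X_1$ in $\cX$ whose kernel in $\cA$ is $A$, apply the $d$-kernel hypothesis, and splice.

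The main obstacle is the extraction of an exact sequence in $\cA$ from a $d$-cokernel (respectively $d$-kernel), which is defined purely through $\cX$-valued Hom-exactness. Both the intermediate exactness and the identification of $A$ with the cokernel (respectively kernel) of the first map reduce to two already-established principles: weak (co)kernels in $\cX$ map to exact sequences in $\cA$ (Lemma~\ref{B1,B2,B3 implies A1,A2} and its dual), and the (co)generating property forces left (right) $\cX$-approximations to be (mono)(epi)morphisms in $\cA$. Once both resolutions are in hand, Lemma~\ref{result on d-cluster tilting} immediately concludes that $\cX$ is $d$-cluster tilting in $\cA$.
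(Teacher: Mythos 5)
Your proof is correct and follows the same route as the paper: embed $\cX$ as a generating cogenerating functorially finite subcategory via Theorem~\ref{Embedding Theorem generating cogenerating functorially finite}, upgrade \ref{d-Rigid} to Ext-vanishing via Theorem~\ref{d-Rigid and Ext vanishing}, and verify condition (ii) of Lemma~\ref{result on d-cluster tilting} by realizing $A$ as a cokernel (resp.\ kernel) of a morphism of $\cX$ and feeding that morphism to the $d$-cokernel (resp.\ $d$-kernel) hypothesis. You in fact supply the exactness details the paper leaves implicit and pair the operations the right way round: the paper's phrase ``taking the $d$-kernel of $f$ and the $d$-cokernel of $g$'' (where $\Coker f\cong A$ and $\Ker g\cong A$) has the pairing reversed, since those choices would produce (co)resolutions of length $d+2$ rather than the length $d$ required by Lemma~\ref{result on d-cluster tilting}.
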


\begin{proof}
By Theorem \ref{Embedding Theorem generating cogenerating functorially finite} and Theorem \ref{d-Rigid and Ext vanishing} we can assume $\cX$ is a generating cogenerating functorially finite subcategory of an abelian category $\cA$ satisfying $\Ext^i_{\cA}(X,X')=0$ for all $0<i<d$ and $X,X'\in \cX$. In particular, since $\cX$ is generating and cogenerating, for any object $A\in \cA$ there exists morphisms $f\colon X_{1}\to X_0$ and $g\colon X_0'\to X_{-1}'$ in $\cX$ with $\Coker f\cong A$ and $\Ker g\cong A$. Since $\cX$ is idempotent complete, taking the $d$-kernel of $f$ and the $d$-cokernel of $g$, we see that condition \ref{result on d-cluster tilting:2} in Lemma \ref{result on d-cluster tilting} holds. Hence $\cX$ must be $d$-cluster tilting in $\cA$.
\end{proof}

Next we recall the definition of $d$-abelian categories. Following \cite{Jas16}, we say that a complex
\[
X_{d+1}\xrightarrow{f_{d+1}} \cdots \to X_1\xrightarrow{f_1} X_0
\] 
is $d$\emphbf{-exact} if $X_{d+1}\xrightarrow{f_{d+1}} \cdots \xrightarrow{f_2} X_1$ is a $d$-kernel of $f_1$ and $X_{d}\xrightarrow{f_{d}} \cdots \to X_1\xrightarrow{f_1} X_0$ is a $d$-cokernel of $f_{d+1}$.

\begin{Definition}[Definition 3.1 in \cite{Jas16}]\label{d-abelian categories}
Let $d$ be a positive integer, and let $\cX$ be an additive category. We say that $\cX$ is a $d$-abelian category if it satisfies the following
\begin{enumerate}
\item\label{d-abelian categories:1} $\cX$ is idempotent complete;
\item\label{d-abelian categories:2} Every morphism in $\cX$ has a $d$-kernel and a $d$-cokernel;
\item\label{d-abelian categories:3} Any complex 
\[
X_{d+1}\xrightarrow{f_{d+1}} \cdots \to X_1\xrightarrow{f_1} X_0
\] 
where $f_1$ is an epimorphism and $X_{d+1}\xrightarrow{f_{d+1}} \cdots \xrightarrow{f_2} X_1$ is a $d$-kernel of $f$ must be $d$-exact;
\item\label{d-abelian categories:4} Any complex 
\[
X_{d+1}\xrightarrow{f_{d+1}} \cdots \to X_1\xrightarrow{f_1} X_0
\] 
where $f_{d+1}$ is a monomorphism and $X_{d}\xrightarrow{f_{d}} \cdots \to X_1\xrightarrow{f_1} X_0$ is a $d$-cokernel of $f_{d+1}$ must be $d$-exact.
\end{enumerate}
\end{Definition}

\begin{Proposition}\label{d-abelian satisfies axioms}
Let $\cX$ be an additive category.  Then $\cX$ is $d$-abelian if and only if it is idempotent complete, has $d$-kernels and cokernels, and satisfies \ref{A1}, \ref{A1op}, \ref{A2}, \ref{A2op}, \ref{A3}, \ref{A3}\op and \ref{d-Rigid}.
\end{Proposition}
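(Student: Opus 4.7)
The plan is to prove the biconditional by handling each direction separately. The ``if'' direction is short: given the axioms, Theorem~\ref{axioms implying d-cluster tilting} embeds $\cX$ as a $d$-cluster tilting subcategory of an abelian category, and Jasso's result from \cite{Jas16} then yields that $\cX$ is $d$-abelian.

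For the ``only if'' direction, assume $\cX$ is $d$-abelian. Idempotent completeness and the existence of $d$-kernels and $d$-cokernels are built into Definition~\ref{d-abelian categories}. Axioms \ref{A1} and \ref{A1op} are immediate, because the last morphism of a $d$-kernel is a weak kernel and dually the first map of a $d$-cokernel is a weak cokernel, obtained by truncating the defining Hom-exact sequence to its last three terms. For \ref{A2}, given an epimorphism $f \colon X_1 \to X_0$, take a $d$-kernel $X_{d+1}\to\cdots\to X_2\xrightarrow{f_2}X_1$; by axiom~(iii) of Definition~\ref{d-abelian categories} the extended complex $X_{d+1}\to\cdots\to X_1\to X_0$ is $d$-exact, and its $d$-cokernel portion gives exactness of $\cX(X_0,Y)\to\cX(X_1,Y)\to\cX(X_2,Y)$ for every $Y\in\cX$, so $f$ is a weak cokernel of $f_2$. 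Axiom \ref{A2op} is dual. The axiom \ref{d-Rigid} follows from the same $d$-exact construction applied to an epimorphism $f_1$: the $d$-exactness simultaneously makes each $f_{i+1}$ a weak kernel of $f_i$ and each $f_i$ a weak cokernel of $f_{i+1}$ for $1\leq i\leq d$.

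The main obstacle is verifying \ref{A3} (and dually \ref{A3}\op). My plan is to work with the equivalent alternative form from Remark~\ref{Remark A3 and A3'}, which requires only exhibiting one suitable diagram rather than verifying the condition for arbitrary weak cokernels and weak kernels. Given $f \colon X_2 \to X_1$, I would pick $g$ as the first map of a $d$-cokernel of $f$, pick $h$ as the last map of a $d$-kernel of $g$ with induced $l\colon X_2\to X_2'$ satisfying $h\circ l=f$, and pick $k$ as the last map of a $d$-kernel of $h$. The remaining step is to show that $[l,k]\colon X_2\oplus X_3'\to X_2'$ is an epimorphism in $\cX$. The guiding intuition comes from an abelian envelope: there $h$ factors as $X_2'\twoheadrightarrow I\hookrightarrow X_1$ with $I=\operatorname{im}f$, the map $l$ covers $X_2\twoheadrightarrow I$, and $k$ covers $X_3'\twoheadrightarrow\ker(X_2'\to I)$, so $\operatorname{im}(l)+\operatorname{im}(k)=X_2'$. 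In the purely $d$-abelian setting this image-theoretic argument must be replaced by a Hom-functor computation combining the $d$-kernel property of $k$ relative to $h$, the factorization $h\circ l=f$, and axiom~(iii) of Definition~\ref{d-abelian categories} applied to a constructed epimorphism; this recasting is the most delicate step of the proof.
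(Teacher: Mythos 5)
Your ``if'' direction and your treatment of \ref{A1}, \ref{A1op}, \ref{A2}, \ref{A2op} and \ref{d-Rigid} coincide with the paper's argument and are fine. The problem is \ref{A3}: you correctly reduce to the alternative form \ref{A3'} of Remark \ref{Remark A3 and A3'}, but the one claim that carries all the content --- that $\begin{bmatrix}l & k\end{bmatrix}\colon X_2\oplus X_3'\to X_2'$ is an epimorphism --- is exactly the step you leave as ``the most delicate step,'' with only an intuition in place of a proof. This is a genuine gap, not a routine verification. The paper closes it by citing \cite[Proposition 3.13]{Jas16}, which is precisely the statement that in a $d$-abelian category one can complete the diagram of \ref{A3'} with the required epimorphism; that proposition is a substantive result whose proof uses the $d$-abelian axioms (in particular axiom \ref{d-abelian categories:3}) in an essential way. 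Without citing it or reproving it, your argument is incomplete.

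Moreover, the abelian-envelope intuition you offer does not obviously converge to a proof. Using only \ref{A1} and \ref{A2} (which is all you have established at that point), Theorem \ref{Embedding Theorem} gives an embedding $\cX\to\cA$ under which weak kernels become exact sequences, so $\im k=\Ker h$ and $\im h=\Ker g$ in $\cA$. Chasing your picture, $\begin{bmatrix}l & k\end{bmatrix}$ is an epimorphism if and only if $\im f=\Ker g$ in $\cA$. But for a subcategory satisfying only \ref{B1}--\ref{B3}, a weak cokernel $g$ of $f$ need not satisfy $\Ker g=\im f$; the failure of this equality is exactly what axiom \ref{A3} is designed to control. So the image-theoretic argument reduces the claim to an equivalent restatement of it rather than proving it, and the real work --- extracting the epimorphism from the $d$-exactness axioms of Definition \ref{d-abelian categories} --- is still missing. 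Either invoke \cite[Proposition 3.13]{Jas16} directly, as the paper does, or supply that Hom-functor computation explicitly.
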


\begin{proof}
The "if" direction follows from Theorem \ref{axioms implying d-cluster tilting} and the fact that any $d$-cluster tilting subcategory is $d$-abelian by \cite[Theorem 3.16]{Jas16}. Conversely, assume $\cX$ is a $d$-abelian category. Since $\cX$ has $d$-kernels and $d$-cokernels, axioms \ref{A1}, \ref{A1op} hold automatically. Also by Definition \ref{d-abelian categories} \ref{d-abelian categories:3} and the fact that $\cX$ has $d$-kernels it follows that \ref{A2} and \ref{d-Rigid} hold, and dually by Definition \ref{d-abelian categories} \ref{d-abelian categories:4} and the fact that $\cX$ has $d$-cokernels it follows that \ref{A2}\op \text{ }holds. We show that $\cX$ satisfies \ref{A3}. Let $f^0\colon X^0\to X^1$ be a morphism, and let $f^1\colon X^1\to X^2$ be a weak cokernel of $f^0$. Then by \cite[Proposition 3.13]{Jas16} there exists objects $Y^1_1$ and $Y^2_1$ in $\cX$ and morphisms $g^1_1\colon Y^1_1\to X^1$ and $g^2_1\colon Y^2_1\to Y^1_1$ and $p^0_0\colon X^0\to Y^1_1$ such that
\begin{enumerate}
\item $g^1_1$ is a weak kernel of $f^1$, $g_1^2$ is a weak kernel of $g^1_1$;
\item $g^1_1\circ p^0_0=f^0$;
\item The map $\begin{bmatrix}p^0_0 & g^2_1\end{bmatrix}\colon X^0\oplus Y^2_1\to Y^1_1$ is an epimorphism.
\end{enumerate}
This shows that $\cX$ satisfies axiom \ref{A3'}, which by Remark \ref{Remark A3 and A3'} is equivalent to \ref{A3}. Axiom \ref{A3}\op  \text{ }is proved dually.
\end{proof}

\section{Precluster tilting subcategories}
Let $\cX$ be an additive category satisfying \ref{A1}, \ref{A1op}, \ref{A2}, \ref{A2op}, \ref{A3}, \ref{A3}\op \text{ }and \ref{d-Rigid}. The goal in this section is to find additional axioms on $\cX$ so that it gives an axiomatization of $d$-precluster tilting subcategories as introduced in \cite{IS18}.  In order to do this, we need to reformulate the definition of precluster tilting subcategories so that it makes sense for any abelian category.

 In the following we fix a commutative artinian ring $R$, an Artin $R$-algebra $\Lambda$, and we let $\md \Lambda$ be the category of finitely generated (right) $\Lambda$-modules. We denote by $\underline{\md}\Lambda$ and $\overline{\md} \Lambda$ the quotients of $\md \Lambda$ by the ideals of morphisms factoring through a projective or injective object, respectively, and $\Omega\colon \underline{\md}\Lambda\to \underline{\md}\Lambda$ and $\Omega^-\colon \overline{\md}\Lambda\to \overline{\md}\Lambda$ the syzygy and cosyzygy functor,  respectively. The objects in $\md \Lambda$ and their image in $\underline{\md}\Lambda$ and $\overline{\md} \Lambda$ will be denoted by the same letter. For a subcategory $\cX$ of $\md \Lambda$, we let $\underline{\cX}$ and $\overline{\cX}$ denote the smallest subcategories of  $\underline{\md}\Lambda$ and $\overline{\md} \Lambda$ which are closed under isomorphisms and contain all $X\in \cX$.
 
To define $d$-precluster tilting subcategories, we consider the $d$-Auslander-Reiten translations defined by
\[
\tau_d := \tau \circ \Omega^{d-1}\colon \underline{\md}\Lambda \to \overline{\md} \Lambda \quad \text{and} \quad \tau^-_d := \tau^- \circ \Omega^{-(d-1)}\colon \overline{\md}\Lambda \to \underline{\md} \Lambda
\] 
where $\tau\colon \underline{\md}\Lambda \to \overline{\md}\Lambda$ denotes the classical Auslander-Reiten translation with quasi-inverse $\tau^-\colon \overline{\md}\Lambda \to \underline{\md}\Lambda$. 

\begin{Definition}[Definition 3.2 in \cite{IS18}]\label{precluster tilting}
Let $\cX$ be an additive subcategory of $\md \Lambda$. Assume $\cX$ is closed under direct summands. We say that $\cX$ is a $d$\emphbf{-precluster tilting subcategory} if it satisfies the following:
\begin{enumerate}
\item\label{precluster tilting:1} $\cX$ is a generating cogenerating subcategory of $\md \Lambda$;
\item\label{precluster tilting:2} $\tau_d(X)\in \overline{\cX}$ and $\tau^-_d(X)\in \underline{\cX}$ for all $X\in \cX$;
\item\label{precluster tilting:3} $\Ext^i_{\Lambda}(X,X')=0$ for all $X,X'\in \cX$ and $0<i<d$;
\item\label{precluster tilting:4} $\cX$ is a functorially finite subcategory of $\md \Lambda$.
\end{enumerate}
\end{Definition}

The appearance of $\tau_d$ and $\tau^-_d$ makes precluster tilting subcategories difficult to axiomatize. Luckily, criterion \ref{precluster tilting:2} in Definition \ref{precluster tilting} can be reformulated in homological terms. Our first goal is to do this. For $d>1$ such a reformulation is already known, as the following result shows. For simplicity we set
\[
{}^{\perp_d}\cX:=\{M\in \md \Lambda\mid \Ext^i_{\Lambda}(M,X)=0 \text{ for all } 0<i<d \text{ and }X\in \cX\}
\]
and 
\[
\cX^{\perp_d}:=\{M\in \md \Lambda\mid \Ext^i_{\Lambda}(X,M)=0 \text{ for all } 0<i<d \text{ and }X\in \cX\}.
\]

\begin{Lemma}[Proposition 3.8 part b) in \cite{IS18}]\label{reformulation of precluster tilting d>1}
Let $d>1$ be an integer. Assume $\cX$ is an  additive subcategory of $\md \Lambda$ closed under direct summands and satisfying \ref{precluster tilting:1}, \ref{precluster tilting:3}, \ref{precluster tilting:4} in Definition \ref{precluster tilting}. Then the following are equivalent:
\begin{enumerate}
\item $\cX$ is $d$-precluster tilting;
\item ${}^{\perp_d}\cX=\cX^{\perp_d}$.
\end{enumerate}
\end{Lemma}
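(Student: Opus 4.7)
The plan is to use higher Auslander--Reiten duality as the central tool: for $i \geq 1$ and $M, N \in \md \Lambda$,
$$\Ext^i_\Lambda(M, N) \cong D\overline{\Hom}_\Lambda(N, \tau_i M) \cong D\underline{\Hom}_\Lambda(\tau_i^- N, M),$$
where $D$ denotes Matlis duality and $\tau_i = \tau \circ \Omega^{i-1}$, $\tau_i^- = \tau^- \circ \Omega^{-(i-1)}$. In particular, $\tau_d X \in \overline{\cX}$ amounts to saying that the functor $\Ext^d_\Lambda(X, -)$ on $\md \Lambda$ is naturally isomorphic to $D\overline{\Hom}_\Lambda(-, Y)$ for some $Y \in \cX$, and symmetrically for $\tau_d^- X \in \underline{\cX}$.

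For the direction (i)$\Rightarrow$(ii), I would take $M \in \cX^{\perp_d}$ and, using generation and functorial finiteness of $\cX$, iteratively form right $\cX$-approximations to build an exact sequence
$$\cdots \to X_2 \to X_1 \to X_0 \to M \to 0$$
with $X_k \in \cX$. A standard long-exact-sequence argument using rigidity shows that each intermediate syzygy $M_k$ again lies in $\cX^{\perp_d}$, and dimension shift yields isomorphisms $\Ext^i_\Lambda(M, X) \cong \Ext^{i+k}_\Lambda(M_k, X)$ in the appropriate ranges. To close the argument, one uses AR duality together with the hypothesis $\tau_d^- X \in \underline{\cX}$ to rewrite the boundary $\Ext^d$-groups appearing on the far right of the resolution as $\underline{\Hom}$-groups out of an object of $\cX$; these vanish because the syzygy lies in $\cX^{\perp_d}$ and maps from $\cX$ into it factor through projectives in the relevant range. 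This forces $\Ext^i_\Lambda(M, X) = 0$ for $0 < i < d$, hence $M \in {}^{\perp_d}\cX$. The reverse inclusion ${}^{\perp_d}\cX \subseteq \cX^{\perp_d}$ is proved dually via left $\cX$-approximations and the hypothesis $\tau_d X \in \overline{\cX}$.

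For the direction (ii)$\Rightarrow$(i), fix $X \in \cX$ and consider the functor $F := D\Ext^d_\Lambda(X, -) \cong \overline{\Hom}_\Lambda(-, \tau_d X)$ on $\md \Lambda$. Using ${}^{\perp_d}\cX = \cX^{\perp_d}$ together with rigidity and functorial finiteness, one produces an object $Y \in \cX$ (concretely, via a minimal left $\cX$-approximation of $\tau_d X$) together with a natural isomorphism $\overline{\Hom}_\Lambda(-, \tau_d X) \cong \overline{\Hom}_\Lambda(-, Y)$. Yoneda's lemma inside $\overline{\md}\Lambda$ then identifies $Y \cong \tau_d X$, so $\tau_d X \in \overline{\cX}$; the condition $\tau_d^- X \in \underline{\cX}$ is obtained dually.

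The main obstacle is the delicate dimension-shift inside direction (i)$\Rightarrow$(ii): propagating the single piece of information living in degree $d$ (namely $\tau_d^- X \in \underline{\cX}$) into vanishing of every $\Ext^i_\Lambda$ with $0 < i < d$, while remaining inside the range where rigidity controls the right $\cX$-approximations. This careful bookkeeping is the genuine combinatorial content of the Iyama--Solberg argument, and is where one would spend essentially all of the proof effort.
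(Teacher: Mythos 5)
First, a point of comparison: the paper does not prove this statement at all. It is imported verbatim as Proposition 3.8(b) of \cite{IS18}, and the surrounding text (Lemma \ref{useful lemma precluster tilting}, Theorem \ref{reformulation precluster tilting}) uses it as a black box. So there is no in-paper proof to measure your argument against; what you have written has to stand on its own as a reconstruction of the Iyama--Solberg argument.

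Judged on those terms, your proposal correctly identifies the central tool (the higher Auslander--Reiten duality $\Ext^d_\Lambda(M,N)\cong D\overline{\Hom}_\Lambda(N,\tau_dM)\cong D\underline{\Hom}_\Lambda(\tau_d^-N,M)$, which does hold for all $M,N$ by combining ordinary AR duality with dimension shifting) and the right architecture: resolve by $\cX$-approximations, shift dimensions using $d$-rigidity, and let $\tau_d^{\pm}\cX\subseteq\cX$ control the boundary term. But the proposal is a plan rather than a proof, and the one place where you do commit to a mechanism, the mechanism is wrong. You claim the boundary $\underline{\Hom}$-groups vanish ``because maps from $\cX$ into [the syzygy] factor through projectives in the relevant range.'' They do not: surjectivity of a right $\cX$-approximation only lets you factor a map from $\cX$ through an object of $\cX$, not through a projective, and $\underline{\Hom}(X',M_k)$ certainly need not vanish (take $M_k\in\cX$ non-projective and $X'=M_k$). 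The correct way this step goes is via naturality of AR duality: from $0\to M_{k+1}\to X_k\xrightarrow{\pi}M_k\to 0$ and $\Ext^{d-1}(X_k,X)=0$ one gets $\Ext^{d-1}(M_{k+1},X)\cong\ker\bigl(\Ext^d(M_k,X)\to\Ext^d(X_k,X)\bigr)\cong D\operatorname{Coker}\bigl(\underline{\Hom}(\tau_d^-X,X_k)\to\underline{\Hom}(\tau_d^-X,M_k)\bigr)$, and this cokernel vanishes because $\tau_d^-X$ is represented by an object of $\cX$ and $\pi$ is a right $\cX$-approximation, so $\pi\circ-$ is already surjective on $\Hom$. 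Even with this repair, your dimension shift does not close: $d$-rigidity shifts $\Ext^i(M,X)$ for $1\le i\le d-1$ down to $\Ext^1(M_{i-1},X)$ on \emph{shallow} syzygies, while the repaired boundary argument kills $\Ext^{d-1}$ of \emph{first and deeper} $\cX$-syzygies, i.e.\ $\Ext^1(M_l,X)$ only for $l\ge d-1$; bridging this mismatch (e.g.\ by passing to $\cX$-cosyzygies of $M$ and showing the resulting cokernel maps are still approximations, which is where both hypotheses $\tau_dX\in\overline{\cX}$ and $\tau_d^-X\in\underline{\cX}$ must interact) is precisely the content you defer to ``careful bookkeeping.''

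The converse direction is in worse shape: saying that one ``produces a natural isomorphism $\overline{\Hom}(-,\tau_dX)\cong\overline{\Hom}(-,Y)$ via a minimal left $\cX$-approximation'' and then applies Yoneda is circular, since by Yoneda such a natural isomorphism is literally the statement $\tau_dX\cong Y$ in $\overline{\md}\Lambda$ that you are trying to prove; you give no indication of how the hypothesis ${}^{\perp_d}\cX=\cX^{\perp_d}$ forces the approximation map to become an isomorphism modulo injectives. In summary: right toolbox, honest about what is missing, but the two steps you do justify are respectively incorrect as stated and question-begging, so this does not yet constitute a proof.
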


We also need the following lemma which gives a simpler criterion for when ${}^{\perp_d}\cX=\cX^{\perp_d}$.

\begin{Lemma}\label{useful lemma precluster tilting}
Let $d>1$ be a positive integer, and assume $\cX$ is an additive subcategory of $\md \Lambda$ closed under direct summands and satisfying \ref{precluster tilting:1}, \ref{precluster tilting:3}, \ref{precluster tilting:4} in Definition \ref{precluster tilting}. Assume furthermore that ${\cX}^{\perp_d}\subseteq {}^{\perp_2}\cX$ and ${}^{\perp_d}\cX\subseteq\cX^{\perp_2}$. Then ${}^{\perp_d}\cX=\cX^{\perp_d}$.
\end{Lemma}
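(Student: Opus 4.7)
The plan is to establish both inclusions by parallel dimension-shift arguments; I treat $\cX^{\perp_d}\subseteq {}^{\perp_d}\cX$ in detail, the reverse inclusion being entirely dual. The case $d=2$ is immediate since both $\cX^{\perp_2}$ and ${}^{\perp_2}\cX$ are identified by the two hypotheses, so I may assume $d\geq 3$.

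The main induction is on $i\in \{1,\dots,d-1\}$, proving that $\Ext^i_\Lambda(M,X)=0$ for every $M\in \cX^{\perp_d}$ and every $X\in \cX$. The base $i=1$ is precisely the first hypothesis $\cX^{\perp_d}\subseteq {}^{\perp_2}\cX$. For the inductive step, fix $M\in \cX^{\perp_d}$ and use that $\cX$ is functorially finite and generating to pick a surjective right $\cX$-approximation $X_0\twoheadrightarrow M$ (any right $\cX$-approximation factors a chosen epimorphism from $\cX$ onto $M$, and is therefore itself epi). Let $0\to N\to X_0\to M\to 0$ denote the resulting short exact sequence.

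Two long-exact-sequence fragments of this sequence do all the work. Applying $\Hom_\Lambda(X,-)$ for $X\in \cX$ and using $d$-rigidity ($\Ext^j_\Lambda(X,X_0)=0$ for $0<j<d$) together with $M\in \cX^{\perp_d}$, one reads off $\Ext^j_\Lambda(X,N)=0$ for $2\leq j\leq d-1$ from the vanishing of the two flanking groups in $\Ext^{j-1}_\Lambda(X,M)\to \Ext^j_\Lambda(X,N)\to \Ext^j_\Lambda(X,X_0)$, while the right $\cX$-approximation property forces $\Ext^1_\Lambda(X,N)=0$; hence $N\in \cX^{\perp_d}$. Dually, applying $\Hom_\Lambda(-,X)$ and again invoking $d$-rigidity, for $1\leq i\leq d-2$ the fragment $\Ext^i_\Lambda(X_0,X)\to \Ext^i_\Lambda(N,X)\to \Ext^{i+1}_\Lambda(M,X)\to \Ext^{i+1}_\Lambda(X_0,X)$ has vanishing flanks and so collapses to an isomorphism $\Ext^{i+1}_\Lambda(M,X)\cong \Ext^i_\Lambda(N,X)$.

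Combining these, the inductive hypothesis applied to $N\in \cX^{\perp_d}$ gives $\Ext^i_\Lambda(N,X)=0$, hence $\Ext^{i+1}_\Lambda(M,X)=0$, closing the induction. The reverse inclusion ${}^{\perp_d}\cX\subseteq \cX^{\perp_d}$ follows by the same template after replacing right by left $\cX$-approximations, generating by cogenerating, and invoking the symmetric hypothesis ${}^{\perp_d}\cX\subseteq \cX^{\perp_2}$. The only mildly subtle point is arranging the right $\cX$-approximation to be simultaneously surjective, which is free from $\cX$ being generating; beyond that the argument is routine dimension-shifting and I anticipate no real obstacle.
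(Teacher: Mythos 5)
Your proof is correct and follows essentially the same route as the paper's: an induction whose base case is the hypothesis $\cX^{\perp_d}\subseteq {}^{\perp_2}\cX$, with the inductive step carried out by taking a (necessarily surjective) right $\cX$-approximation $X_0\to M$, showing the kernel again lies in $\cX^{\perp_d}$, and dimension-shifting $\Ext^{i+1}_\Lambda(M,X)\cong\Ext^i_\Lambda(N,X)$; the reverse inclusion is dual. The only differences from the paper are the indexing of the induction and your explicit remark that the approximation is epimorphic, which the paper leaves implicit.
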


\begin{proof}
We prove by induction on $2\leq i\leq d$ that $\cX^{\perp_d}\subseteq {}^{\perp_i}\cX$. For $i=2$ this follows by assumption. Assume the claim holds for $2\leq i<d$, and we want to show that it holds for $i+1$. Let $M\in \cX^{\perp_d}$, choose a right $\cX$-approximation $f\colon X\to M$, and let $M'=\Ker f$. Applying $\Hom_{\Lambda}(X',-)$ with $X'\in \cX$ to the exact sequence 
\begin{equation}\label{equation reference}
0\to M'\to X\xrightarrow{f}M\to 0
\end{equation} gives a long exact sequence
\begin{multline*}
0\to \operatorname{Hom}_{\Lambda}(X',M')\to \operatorname{Hom}_{\Lambda}(X',X)\xrightarrow{f\circ -} \operatorname{Hom}_{\Lambda}(X',M)\to \Ext^1_{\Lambda}(X',M') \\
 \to \Ext^1_{\Lambda}(X',X) \to \cdots \to \Ext^{j-1}_{\Lambda}(X',M)\to \Ext^j_{\Lambda}(X',M') 
 \to \Ext^j_{\Lambda}(X',X)\to \cdots
\end{multline*}
Since $f$ is a right $\cX$-approximation, it follows that the map 
\[
\operatorname{Hom}_{\Lambda}(X',X)\xrightarrow{f\circ -} \operatorname{Hom}_{\Lambda}(X',M)
\]
is an epimorphism. Also, since $\Ext^j_{\Lambda}(X',X)=0$ for $0<j<d$ by Definition \ref{precluster tilting} \ref{precluster tilting:3} and $\Ext^j_{\Lambda}(X',M)=0$ for $0<j<d$ by assumption, it follows that $\Ext^j_{\Lambda}(X',M')=0$ for $0<j<d$. Hence $M'\in \cX^{\perp_d}$, and therefore $M'\in {}^{\perp_i}\cX$ by induction hypothesis. Now applying $\Hom_{\Lambda}(-,X')$ to \eqref{equation reference} and considering the long exact sequence we get
\[
\Ext^{i}_{\Lambda}(M,X')\cong \Ext^{i-1}_{\Lambda}(M',X')=0
\]
since $\Ext^i_{\Lambda}(X,X')=0=\Ext^{i-1}_{\Lambda}(X,X')$. This shows that $M\in {}^{\perp_{i+1}}\cX$. Therefore, by induction we get that $\cX^{\perp_d}\subseteq {}^{\perp_d}\cX$. The inclusion ${}^{\perp_d}\cX\subseteq\cX^{\perp_d}$ is proved dually. Combining the inclusions, we get that ${}^{\perp_d}\cX=\cX^{\perp_d}$, which proves the claim.
\end{proof}

Note that Lemma \ref{reformulation of precluster tilting d>1} only holds when $d>1$, so we still need a homological reformulation of  Definition \ref{precluster tilting} \ref{precluster tilting:2} when $d=1$. This is done by the following result.

\begin{Theorem}\label{reformulation precluster tilting}
Let $d$ be a positive integer, and assume $\cX$ is an additive subcategory of $\md \Lambda$ closed under direct summands and satisfying \ref{precluster tilting:1}, \ref{precluster tilting:3} and \ref{precluster tilting:4} in Definition \ref{precluster tilting}. The following are equivalent:
\begin{enumerate}
\item\label{reformulation precluster tilting:1} $\cX$ is a $d$-precluster tilting subcategory;
\item\label{reformulation precluster tilting:2} For any exact sequence in $\md \Lambda$
\[
0\to M'\xrightarrow{f_{d+1}} X_d\xrightarrow{f_d} \cdots \xrightarrow{f_3} X_2\xrightarrow{f_2} X_1\xrightarrow{f_1} M\to 0
\]
with $X_i\in \cX$ for $1\leq i\leq d$, the following hold:
\begin{enumerate}
\item If the induced map $X_i\to \im f_i$ is a right $\cX$-approximation for all $1\leq i\leq d$, then $f_{d+1}\colon M'\to X_d$ is a left $\cX$-approximation;
\item If the induced map $\im f_i\to X_{i-1}$ is a left $\cX$-approximation for all $2\leq i\leq d+1$, then $f_1\colon X_1\to M$ is a right $\cX$-approximation.
\end{enumerate}
\end{enumerate}
\end{Theorem}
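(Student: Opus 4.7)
The plan is to establish both directions by relating the approximation condition \ref{reformulation precluster tilting:2} to the Auslander-Reiten translate condition \ref{precluster tilting:2} in the definition of $d$-precluster tilting. The main technical tool is the Auslander-Reiten formula $\overline{\Hom}_\Lambda(N, \tau_d M)\cong D\Ext^d_\Lambda(M, N)$ (valid up to the usual stable-category conventions) together with, when $d>1$, Lemma \ref{reformulation of precluster tilting d>1} and Lemma \ref{useful lemma precluster tilting}, which translate the $\tau_d$-condition into the Ext-vanishing condition ${}^{\perp_d}\cX = \cX^{\perp_d}$.

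For \ref{reformulation precluster tilting:1} $\Rightarrow$ \ref{reformulation precluster tilting:2}(a), I would start with the given exact sequence. The right approximation hypothesis is equivalent to the statement that each short exact sequence $0 \to \im f_{i+1} \to X_i \to \im f_i \to 0$ remains short exact under $\Hom_\Lambda(X', -)$ for $X'\in\cX$, and splicing yields that the original sequence stays exact under $\Hom_\Lambda(X', -)$ for every $X'\in\cX$. To show $f_{d+1}$ is a left $\cX$-approximation, apply $\Hom_\Lambda(-, X')$ to $0 \to M'\to X_d\to\im f_d\to 0$. When $d \geq 2$, the vanishing $\Ext^1_\Lambda(X_d, X') = 0$ from \ref{precluster tilting:3} implies that the obstruction cokernel equals $\Ext^1_\Lambda(\im f_d, X')$. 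Shifting iteratively through the short exact sequences, using \ref{precluster tilting:3} at each step, relates this group to a higher Ext of $M$, which then converts via the Auslander-Reiten formula into a Hom-expression involving $\tau_d$; the $\tau_d$-closedness in \ref{precluster tilting:2} together with the approximation-exactness established at the outset forces the vanishing. Part (b) is dual, and $d=1$ is handled directly using $\tau_1 = \tau$ and the classical Auslander-Reiten formula.

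For \ref{reformulation precluster tilting:2} $\Rightarrow$ \ref{reformulation precluster tilting:1} with $d>1$, by Lemma \ref{useful lemma precluster tilting} it is enough to show $\cX^{\perp_d} \subseteq {}^{\perp_2}\cX$ and ${}^{\perp_d}\cX \subseteq \cX^{\perp_2}$. Given $M\in\cX^{\perp_d}$, I would iterate right $\cX$-approximations (which exist since $\cX$ is contravariantly finite and generating) to build an exact sequence $0\to M'\to X_d \to\cdots\to X_1\to M\to 0$; the hypothesis $M\in\cX^{\perp_d}$ together with \ref{precluster tilting:3} ensures each intermediate map $X_i\to\im f_i$ is a right $\cX$-approximation, so \ref{reformulation precluster tilting:2}(a) applies to give that $M' \to X_d$ is a left $\cX$-approximation. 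A diagram chase on the resulting short exact sequence then extracts $\Ext^1_\Lambda(M, X') = 0$ for all $X'\in\cX$. The dual inclusion follows from \ref{reformulation precluster tilting:2}(b). The case $d=1$ must be treated separately by directly exhibiting $\tau X$ as stably isomorphic to an object of $\cX$, using an injective copresentation of $X$ and the hypothesis applied to a suitable short exact sequence.

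The main obstacle will be the careful bookkeeping in the iterated long-exact-sequence computation for \ref{reformulation precluster tilting:1} $\Rightarrow$ \ref{reformulation precluster tilting:2}, in particular pinning down exactly when the higher Ext contributions convert via the Auslander-Reiten formula into a $\tau_d$-expression whose vanishing is the hypothesis \ref{precluster tilting:2}. A secondary difficulty is the $d = 1$ case of both directions, where Lemma \ref{reformulation of precluster tilting d>1} is unavailable and one must rely directly on the classical Auslander-Reiten translate.
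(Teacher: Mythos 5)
Your overall architecture (reduce to ${}^{\perp_d}\cX=\cX^{\perp_d}$ via Lemmas \ref{reformulation of precluster tilting d>1} and \ref{useful lemma precluster tilting} for $d>1$, treat $d=1$ separately via classical AR theory) is the right one and matches the paper, but the concrete mechanism you describe for both $d>1$ directions has a gap, and in both cases it is the same gap: you dimension-shift on the wrong side. For \ref{reformulation precluster tilting:1} $\Rightarrow$ \ref{reformulation precluster tilting:2}(a), applying $\Hom_\Lambda(-,X')$ and shifting through the short exact sequences identifies the obstruction $\Ext^1_\Lambda(\im f_d,X')$ with a subgroup of $\Ext^d_\Lambda(M,X')$, and you propose to kill this via $\Ext^d_\Lambda(M,X')\cong D\overline{\Hom}_\Lambda(X',\tau_d M)$ and the $\tau_d$-closedness of $\cX$. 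But $M$ is an arbitrary cokernel of a map in $\cX$, not an object of $\cX$, so condition \ref{precluster tilting:2} says nothing about $\tau_d M$, and neither $\Ext^d_\Lambda(M,X')$ nor $\overline{\Hom}_\Lambda(X',\tau_d M)$ vanishes in general; an injection into a nonzero group proves nothing. The right-approximation hypothesis must instead be used \emph{covariantly}: applying $\Hom_\Lambda(X,-)$ to the sequences $0\to\im f_{i+1}\to X_i\to\im f_i\to 0$ gives $\Ext^j_\Lambda(X,\im f_d)=0$ for $0<j<d$, i.e.\ $\im f_d\in\cX^{\perp_d}$, and then the equality ${}^{\perp_d}\cX=\cX^{\perp_d}$ from Lemma \ref{reformulation of precluster tilting d>1} converts this into $\Ext^1_\Lambda(\im f_d,X')=0$, which is exactly the obstruction. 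No Auslander--Reiten formula is needed for $d>1$.

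The converse direction has the mirror-image problem. Resolving $M\in\cX^{\perp_d}$ by iterated right $\cX$-approximations and invoking \ref{reformulation precluster tilting:2}(a) tells you that $M'\to X_d$ is a left $\cX$-approximation, which (together with $\Ext^1_\Lambda(X_d,X')=0$) yields $\Ext^1_\Lambda(\im f_d,X')=0$ --- but $\im f_d$ is the $(d-1)$-st $\cX$-syzygy of $M$, and there is no dimension shift taking $\Ext^1_\Lambda(\im f_d,-)$ back down to $\Ext^1_\Lambda(M,-)$; the shifts only go upward in degree toward $\Ext^d_\Lambda(M,-)$. The paper's fix is to place the module from $\cX^{\perp_d}$ at the \emph{left} end: coresolve $M'\in\cX^{\perp_d}$ as $0\to M'\to X_{d-1}\to\cdots\to X_1\to M\to 0$ (the maps $X_i\to\im f_i$ are right approximations because $\Ext^{<d}_\Lambda(\cX,M')=0=\Ext^{<d}_\Lambda(\cX,\cX)$ makes the sequence $\Hom_\Lambda(X,-)$-exact), prepend a single right $\cX$-approximation $X_d\twoheadrightarrow M'$, and then \ref{reformulation precluster tilting:2}(a) makes $\Ker(X_d\to M')\to X_d$ a left approximation, which directly gives $\Ext^1_\Lambda(M',X')=0$. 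You should rework both steps along these lines; your $d=1$ discussion is essentially consistent with the paper's argument (which uses an almost split sequence and a pushout for the converse), though it is only a sketch.
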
 

\begin{proof}
We prove the cases $d>1$ and $d=1$ separately. First assume $d>1$ and that $\cX$ is a $d$-precluster tilting subcategory. Let
\[
0\to M'\xrightarrow{f_{d+1}} X_d\xrightarrow{f_d} \cdots \xrightarrow{f_3} X_2\xrightarrow{f_2} X_1\xrightarrow{f_1} M\to 0
\]
be an exact sequences such that $X_i\in \cX$ and $X_i\to \im f_i$ is a right $\cX$-approximation for all $1\leq i\leq d$. Applying $\Hom_{\Lambda}(X,-)$ with $X\in \cX$ to the exact sequence
\[
0\to \im f_{i+1}\to X_{i}\to \im f_{i}\to 0
\] 
and using that $\Ext^j_{\Lambda}(X,X_{i})=0$ for $1\leq j<d$, we get that 
\[
\Ext^1_{\Lambda}(X,\im f_{i+1})=0 \quad \text{and} \quad \Ext^{j}_{\Lambda}(X,\im f_{i+1})\cong \Ext^{j-1}_{\Lambda}(X,\im f_{i})
\]
for $1\leq i\leq d$ and $2\leq j<d$. Hence, we have that
\[
\Ext^j_{\Lambda}(X,\im f_d)\cong \Ext^{j-1}_{\Lambda}(X,\im f_{d-1})\cong \cdots \cong \Ext^1_{\Lambda}(X,\im f_{d-j+1})=0
\]
for $0<j<d$. This shows that $\im f_d\in \cX^{\perp_d}$, so $\im f_{d}\in {}^{\perp_2}\cX$ by Lemma \ref{reformulation of precluster tilting d>1}, and therefore $f_{d+1}\colon M'\to X_d$ is a left $\cX$-approximation. This together with the dual argument shows the implication \ref{reformulation precluster tilting:1} $\implies$ \ref{reformulation precluster tilting:2} for $d>1$.

Conversely, assume $d>1$ and that $\cX$ satisfies  part \ref{reformulation precluster tilting:2} of the theorem. Let $M'\in \cX^{\perp_d}$, and choose a right $\cX$-approximation $X_d\to M'$ and an exact sequence
\[
0\to M'\xrightarrow{} X_{d-1}\xrightarrow{f_{d-1}}X_{d-2}\xrightarrow{f_{d-2}}\cdots \xrightarrow{f_2}X_1\xrightarrow{f_1}M\to 0
\]
Since $\Ext^i_{\Lambda}(X,M')=0$ and $\Ext^i_{\Lambda}(X,X')=0$ for $0<i<d$ and $X,X'\in \cX$, it follows that the sequence
\[
\Hom_{\Lambda}(X,X_{d-1})\xrightarrow{f_{d-1}\circ -} \cdots \xrightarrow{f_2\circ -} \Hom_{\Lambda}(X,X_1) \xrightarrow{f_1\circ -} \Hom_{\Lambda}(X,M)\to 0
\]
is exact for $X\in \cX$. Hence the canonical map $X_i\to \im f_i$ is a right $\cX$-approximation for $1\leq i\leq d-1$. Therefore, by assumption we get that the map $\Ker f_d\to X_d$ is a left $\cX$-approximation, where $f_d$ is the composite $X_d\to M'\to X_{d-1}$. Hence, applying $\Hom_{\Lambda}(-,X)$ with $X\in \cX$ to the exact sequence
\[
0\to \Ker f_d\to X_d\to M'\to 0
\]
we get that $\Ext^1_{\Lambda}(M',X)=0$ so $M'\in {}^{\perp_2}\cX$. Since $M'\in \cX^{\perp_d}$ was arbitrary, this shows that $\cX^{\perp_d}\subseteq {}^{\perp_2}\cX$. The inclusion ${}^{\perp_d}\cX\subseteq \cX^{\perp_2}$ is proved dually, and the fact that $\cX$ is $d$-precluster tilting follows from Lemma \ref{reformulation of precluster tilting d>1} and Lemma \ref{useful lemma precluster tilting}.

Now we assume $d=1$ and $\cX$ is a $1$-precluster tilting subcategory. Let 
\[
0\to M\xrightarrow{f}X\xrightarrow{g}M'\to 0
\]
 be an exact sequence with $f$ a left $\cX$-approximation. Then since $\tau(X')\in \overline{\cX}$ for all $X'\in \underline{\cX}$, all morphisms $M\to \tau(X')$ will factor through $f$. Hence by \cite[Chapter IV, Corollary 4.4]{ARS95} all morphisms $X'\to M'$ with $X'\in \cX$ will factor through $g$. Therefore $g$ is a right $\cX$-approximation. Together with the dual argument this shows that $1$-precluster tilting implies condition \ref{reformulation precluster tilting:2} in the theorem. 
 
Finally, assume $d=1$ and that condition \ref{reformulation precluster tilting:2} holds for $\cX$. Assume furthermore that there exists an indecomposable module $X\in \cX$ for which $Y=\tau(X)\notin \overline{\cX}$. By abuse of notation we let $\tau(X)$ denote the indecomposable module in $\md \Lambda$ corresponding $Y$ in $\overline{\md}\Lambda$. Consider the commutative diagram with exact rows
\begin{equation*}
\begin{tikzcd}
0\arrow{r}{} & \tau(X) \arrow{r}{} \arrow{d}{1} & E \arrow{r}{} \arrow[dashed]{d}{} & X \arrow{r}{} \arrow[dashed]{d}{} & 0\\
0\arrow{r} & \tau(X) \arrow{r}{} & X'\arrow{r}{} & M \arrow{r}{} & 0 
\end{tikzcd}
\end{equation*}
where the top row is an almost split sequence, and where $\tau(X)\to X'$ is a left $\cX$-approximation with cokernel $M$. Since $\tau(X)\notin \cX$, the morphism $\tau(X)\to X'$ is not a split monomorphism, and therefore it factors through $\tau(X)\to E$. Hence we obtain vertical maps $E\to X'$ and $X\to M$ making the diagram commute. By assumption we have that $X'\to M$ is a right $\cX$-approximation, and hence $X\to M$ factors through $X'\to M$. Since the rightmost square is a pushout square, it follows that $E\to X$ is a split epimorphism. Therefore the sequence $0\to \tau(X)\to E\to X\to 0$ must be split, which is a contradiction. This shows that $\tau(X)\in \cX$. The implication $X\in \overline{\cX}\implies \tau^-(X)\in \underline{\cX}$ is proved dually.
\end{proof}

Motivated by this, we define $d$-precluster tilting subcategories for arbitrary abelian categories. By Theorem \ref{reformulation precluster tilting} it coincides with the classical definition for $\cA=\md \Lambda$.

\begin{Definition}\label{precluster tilting abelian}
Let $\cX$ be an additive subcategory of an abelian category $\cA$. Assume $\cX$ is closed under direct summands. We say that $\cX$ is a $d$\emphbf{-precluster tilting subcategory} if it satisfies the following:
\begin{enumerate}
\item\label{precluster tilting abelian:1} $\cX$ is a generating cogenerating subcategory of $\cA$;
\item\label{precluster tilting abelian:2} For any exact sequence in $\cA$
\[
0\to A'\xrightarrow{f_{d+1}} X_d\xrightarrow{f_d} \cdots \xrightarrow{f_3} X_2\xrightarrow{f_2} X_1\xrightarrow{f_1} A\to 0
\]
with $X_i\in \cX$ for $1\leq i\leq d$, the following hold:
\begin{enumerate}
\item If the induced map $X_i\to \im f_i$ is a right $\cX$-approximation for all $1\leq i\leq d$, then $f_{d+1}\colon A'\to X_d$ is a left $\cX$-approximation;
\item If the induced map $\im f_i\to X_{i-1}$ is a left $\cX$-approximation for all $2\leq i\leq d+1$, then $f_1\colon X_1\to A$ is a right $\cX$-approximation.
\end{enumerate}
\item\label{precluster tilting abelian:3} $\Ext^i_{\cA}(X,X')=0$ for all $X,X'\in \cX$ and $0<i<d$;
\item\label{precluster tilting abelian:4} $\cX$ is a functorially finite subcategory of $\cA$.
\end{enumerate}
\end{Definition}

We now introduce the necessary axiom to capture Definition \ref{precluster tilting abelian} \ref{precluster tilting abelian:2}.

\begin{enumerate}[label=(A4.$d$)]
\item\label{A4} Consider a sequence
\[
X_{d+1}\xrightarrow{f_{d+1}}X_d\xrightarrow{f_d}\cdots \xrightarrow{f_3}X_2\xrightarrow{f_2}X_1\xrightarrow{f_1}X_0\xrightarrow{f_0}X_{-1}
\]
with $f_{i+1}$ a weak kernel of $f_i$ for all $0\leq i\leq d$. Then $f_{d+1}$ is a weak cokernel.
\end{enumerate}

\begin{Theorem}\label{axiomatization precluster tilting}
Let $\cX$ be a generating cogenerating functorially finite subcategory of an abelian category $\cA$ satisfying $\Ext^i_{\cA}(X,X')=0$ for all $X,X'\in \cX$ and $0<i<d$. Then $\cX$ satisfies \ref{A4} and \ref{A4}\op\text{ } if and only if it is $d$-precluster tilting.
\end{Theorem}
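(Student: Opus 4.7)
The plan is to establish separately that \ref{A4} is equivalent to the first half of condition \ref{precluster tilting abelian:2} of Definition \ref{precluster tilting abelian}, and that \ref{A4}\op{ }is equivalent to the second half; the dual follows by the same argument applied in $\cA\op$. Since the remaining parts of Definition \ref{precluster tilting abelian}, namely \ref{precluster tilting abelian:1}, \ref{precluster tilting abelian:3}, and \ref{precluster tilting abelian:4}, are provided by the standing hypotheses, this will give the theorem.

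The bridge is the following dictionary: since $\cX$ is generating, a morphism $f_{i+1}\colon X_{i+1}\to X_i$ in $\cX$ is a weak kernel of $f_i\colon X_i\to X_{i-1}$ if and only if $X_{i+1}\to \Ker f_i$ is a right $\cX$-approximation, in which case it is automatically an epimorphism in $\cA$, so that $\im f_{i+1}=\Ker f_i$ and the factorization $X_{i+1}\to \im f_{i+1}$ is itself a right $\cX$-approximation. Hence a sequence $X_{d+1}\to\cdots\to X_{-1}$ as in the hypothesis of \ref{A4} yields an exact sequence
\[
0\to \Ker f_d \to X_d \to \cdots \to X_1 \to \Ker f_0 \to 0
\]
with $X_i\to \im f_i$ a right $\cX$-approximation for every $1\leq i\leq d$; conversely, any such exact sequence can be extended to an input for \ref{A4} by using the cogenerating property and functorial finiteness to attach a left $\cX$-approximation $A\hookrightarrow X_0$ followed by one of $X_0/A\hookrightarrow X_{-1}$ on the right, together with a right $\cX$-approximation $X_{d+1}\twoheadrightarrow A'$ on the left.

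For \ref{A4} $\Longrightarrow$ \ref{precluster tilting abelian:2}(a), extend the given exact sequence as described; \ref{A4} then asserts that $f_{d+1}\colon X_{d+1}\to X_d$ is a weak cokernel, hence by Lemma \ref{Weak kernels and cokernels} the weak cokernel of any weak kernel $g\colon X_{d+2}\to X_{d+1}$ of itself. Writing $p\colon X_{d+1}\twoheadrightarrow A'$ and $\iota\colon A'\hookrightarrow X_d$ for the image factorization of $f_{d+1}=\iota\circ p$, note that $g$ factors through $\Ker f_{d+1}=\Ker p$, so for any $h\colon A'\to Y$ with $Y\in\cX$ the composite $h\circ p$ satisfies $h\circ p\circ g=0$ and therefore factors through $f_{d+1}$ as $\tilde h\circ\iota\circ p$; cancelling the epimorphism $p$ yields $h=\tilde h\circ\iota$, so $\iota$ is a left $\cX$-approximation. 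For the converse, \ref{precluster tilting abelian:2}(a) applied to the extracted exact sequence gives that $\Ker f_d\hookrightarrow X_d$ is a left $\cX$-approximation; choosing any weak kernel $g\colon X_{d+2}\to X_{d+1}$ of $f_{d+1}$, a morphism $h\colon X_{d+1}\to Y$ with $h\circ g=0$ vanishes on $\Ker f_{d+1}$, descends to $\bar h\colon A'\to Y$, and extends to $\tilde h\colon X_d\to Y$ via the approximation property, giving $h=\tilde h\circ f_{d+1}$ and showing that $f_{d+1}$ is a weak cokernel of $g$.

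The only potentially delicate step is the dictionary identifying weak (co)kernels in $\cX$ with right (resp.\ left) $\cX$-approximations of kernels (resp.\ cokernels) in $\cA$, which crucially uses that $\cX$ is both generating and cogenerating; once this translation is in place, both directions reduce to a symmetric pair of diagram chases and no essential obstacle should arise.
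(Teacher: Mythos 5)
Your proposal is correct and follows essentially the same route as the paper: the paper's proof consists precisely of the observation that a morphism $f$ in $\cX$ is a weak kernel (resp.\ weak cokernel) if and only if $X\to \im f$ is a right $\cX$-approximation (resp.\ $\im f\to X'$ is a left $\cX$-approximation), from which the equivalence of \ref{A4} and \ref{A4}\op{} with Definition \ref{precluster tilting abelian} \ref{precluster tilting abelian:2} is immediate. You have simply spelled out the details of this translation (including the use of the generating/cogenerating hypotheses to extend the exact sequences to inputs for the axioms), all of which check out.
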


\begin{proof}
This follows immediately from the fact that a  map $X\xrightarrow{f}X'$ is a weak kernel or weak cokernel if and only if the projection $X\to \im f$ is a right $\cX$-approximation or the inclusion $\im f\to X'$ is a left $\cX$-approximation, respectively.
\end{proof}

\bibliography{Mybibtex}
\bibliographystyle{plain} 

\end{document}